\documentclass{article}

\usepackage{microtype}
\usepackage{graphicx}
\usepackage{booktabs} 
\usepackage{bbding}
\usepackage{dsfont}
\usepackage{subcaption}
\usepackage{algorithm}
\usepackage{algorithmic}
\usepackage{hyperref}

\usepackage{arXiv}

\usepackage{amsmath}
\usepackage{amssymb}
\usepackage{mathtools}
\usepackage{amsthm}

\usepackage[capitalize,noabbrev]{cleveref}

\theoremstyle{plain}
\newtheorem{theorem}{Theorem}[section]

\newtheorem{lemma}[theorem]{Lemma}
\newtheorem{corollary}[theorem]{Corollary}
\theoremstyle{definition}
\newtheorem{definition}[theorem]{Definition}
\newtheorem{assumption}[theorem]{Assumption}
\theoremstyle{remark}
\newtheorem{remark}[theorem]{Remark}

\usepackage{stackengine}
\stackMath
\newcommand\tsup[2][2]{%
	\def\useanchorwidth{T}%
	\ifnum#1>1%
	\stackon[-.5pt]{\tsup[\numexpr#1-1\relax]{#2}}{\scriptscriptstyle\sim}%
	\else%
	\stackon[.5pt]{#2}{\scriptscriptstyle\sim}%
	\fi%
}

\DeclareMathOperator*{\argmin}{arg\,min}
\usepackage{multirow}
\usepackage[normalem]{ulem}
\useunder{\uline}{\ul}{}
\usepackage{enumitem}
\usepackage{lipsum}
\usepackage{amsfonts}
\usepackage{graphicx}
\usepackage{epstopdf}

\ifpdf
\DeclareGraphicsExtensions{.eps,.pdf,.png,.jpg}
\else
\DeclareGraphicsExtensions{.eps}
\fi

\newcommand{\be}{\begin{equation}}
	\newcommand{\ee}{\end{equation}}

\usepackage{breqn}
\usepackage{todonotes}
\setlength\marginparwidth{1in}
\usepackage{etoolbox}
\makeatletter
\patchcmd{\@addmarginpar}{\ifodd\c@page}{\ifodd\c@page\@tempcnta\m@ne}{}{}
\makeatother
\reversemarginpar

\usepackage{mathtools}
\allowdisplaybreaks

\usepackage{bbm}
\usepackage{bbding}
\usepackage{amssymb}
\def\sa#1{{#1}}
\def\fprod#1{\left\langle#1\right\rangle}

\def\cond{\texttt{Cond}}




\def\cI{\mathcal{I}}

\def\cO{\mathcal{O}}

\def\smskip{\smallskip}

\def\texitem#1{\par\smskip\noindent\hangindent 25pt
               \hbox to 25pt {\hss #1 ~}\ignorespaces}


\def\norm#1{\|#1\|}

\newcommand{\BEAS}{\begin{eqnarray*}}
\newcommand{\EEAS}{\end{eqnarray*}}
\newcommand{\BEA}{\begin{eqnarray}}
\newcommand{\EEA}{\end{eqnarray}}
\newcommand{\BEQ}{\begin{eqnarray}}
\newcommand{\EEQ}{\end{eqnarray}}
\newcommand{\BIT}{\begin{itemize}}
\newcommand{\EIT}{\end{itemize}}
\newcommand{\BNUM}{\begin{enumerate}}
\newcommand{\ENUM}{\end{enumerate}}

\newcommand{\BA}{\begin{array}}
\newcommand{\EA}{\end{array}}


\newcommand{\reals}{\mathbb{R}}
\newcommand{\integers}{\mathbb{Z}}















\newif\ifpagenumbering
\pagenumberingtrue

\pagenumberingfalse

%
%
\newsavebox{\theorembox}
\newsavebox{\lemmabox}
\newsavebox{\defnbox}
\newsavebox{\assbox}
\savebox{\theorembox}{\noindent\bf Theorem}
\savebox{\lemmabox}{\noindent\bf Lemma}
\savebox{\defnbox}{\noindent Definition}

\usepackage[normalem]{ulem}
\DeclareUnicodeCharacter{2212}{~}
\def\adasub{\texttt{AdaSubGrad}}
\def\adaipl{\texttt{AdaIPL}}

\def\pl{\texttt{PL}}
\def\ipl{\texttt{IPL}}
\def\psub{\texttt{PSubGrad}}
\def\gsub{\texttt{GSubGrad}}
\def\apd{\texttt{APD}}
\def\apg{\texttt{APG}}

\def\lac{\textbf{(LAC)}}
\def\hac{\textbf{(HAC)}}

\def\lace{\textbf{(LAC-exact)}}
\def\hace{\textbf{(HAC-exact)}}

\def\lz#1{{#1}}
\definecolor{mycomment}{HTML}{2F5D50}

\newcommand{\tcb}[1]{{#1}}

\usepackage{graphicx}
\usepackage{wrapfig}

\usepackage{authblk}

\usepackage{amsopn}

\begin{document}
	\title{Adaptive Algorithms for Robust Phase Retrieval}
	\author[1]{Zhong Zheng}
	\author[2]{Necdet Serhat Aybat}
	\author[3]{Shiqian Ma}
	\author[1]{Lingzhou Xue}
	
	\affil[1]{Department of Statistics, Pennsylvania State University}
	\affil[2]{Department of Industrial and Manufacturing Engineering, Pennsylvania State University}
	\affil[3]{Department of Computational Applied Mathematics and Operations Research, Rice University}
	
	\date{First Version: September 2024. This Version: February 2026.}
	\maketitle
	
	\begin{abstract}
This paper considers robust phase retrieval, which can be cast as a nonsmooth
and nonconvex optimization problem. We propose two 
first-order algorithms with adaptive step sizes: the subgradient algorithm (\adasub{}) and the inexact proximal linear algorithm (\adaipl{}). Our contribution lies in a novel design of adaptive step sizes based on quantiles of the absolute residuals. Local linear convergence of both algorithms is analyzed under different 
regimes for the hyperparameters. Numerical experiments on synthetic datasets and image recovery also demonstrate that our methods are
competitive with existing methods in the literature that utilize predetermined (possibly impractical) step sizes, such as subgradient methods and the inexact proximal linear method.
	\end{abstract}
	
	\begin{keywords}{}
Adaptive Steps, 
Subgradient Method, Proximal Linear Algorithm, 
Robust Phase Retrieval. 
	\end{keywords}

\section{Introduction}\label{sec:intro}
Phase retrieval~(PR) aims to recover a signal from intensity-
or magnitude-based measurements. It finds various applications in different fields, including X-ray crystallography \cite{miao1999extending}, optics \cite{millane1990phase}, {diffraction and array imaging} \cite{chai2010array}, 
astronomy \cite{fienup1987phase}, and microscopy \cite{miao2008extending}. Mathematically, 
PR tries to find the true signal vectors $x_\star$ or $-x_\star$ in $\mathbb{R}^n$ from a set of magnitude measurements:\looseness=-5
\begin{equation}\label{intro_noiseless_phase}
	b_{i}=    \fprod{a_i,x_\star}^2, \text{ for } i=1,2,\ldots,m,
\end{equation}
where $a_i\in \mathbb{R}^n$ and $b_i\geq 0, i=1,2,\ldots,m$. Directly solving the equations in \eqref{intro_noiseless_phase} 
is an NP-hard problem \cite{fickus2014phase}, and 
algorithms based on different designs of objective functions have been well studied in the literature, including Wirtinger flow \cite{candes2015phase}, truncated Wirtinger flow \cite{chen2017solving}, truncated amplitude flow \cite{wang2017solving}, and reshaped Wirtinger flow \cite{zhang2017nonconvex}.

In this paper, we focus on the robust phase retrieval (RPR) problem \cite{duchi2019solving}, which considers the case where $b_i$ might contain infrequent but arbitrary noise due to measurement errors, 
i.e.,
\begin{equation}\label{intro_corrupted_phase}
	b_{i} = \begin{cases}
		\fprod{a_i,x_\star}^2, & i\in \mathcal{I}_1,\\
		\xi_i, & i\in \mathcal{I}_2,
	\end{cases}
\end{equation}
in which $\mathcal{I}_1\bigcup \mathcal{I}_2 = \{1,2\ldots,m\}$, $\mathcal{I}_1\cap\mathcal{I}_2 = \emptyset$, and $\xi_i$ denotes the noise that only exists in measurements in $\mathcal{I}_2$, and it follows an arbitrary distribution. We will refer to such noisy measurements as \textit{corrupted measurements} (this is the name used by \cite{duchi2019solving}). Our objective is to recover $x_\star$ or $-x_\star$
using $\{(a_i,b_i)\}_{i=1}^m$ without knowing $\mathcal{I}_1$ or $\mathcal{I}_2$. 
\cite{duchi2019solving} proposed to formulate RPR as 
an optimization problem 
employing the nonsmooth $\ell_1$-loss:
\begin{equation}\label{duchi_l1_ori}
	\min_{x\in \mathbb{R}^{n}} F(x)\triangleq \frac{1}{m}\sum_{i=1}^{m}\left|
	\fprod{a_i,x}^2-b_i\right| = h(c(x)),
\end{equation}
where $h(z) \triangleq \frac{1}{m}\|z\|_1$, and $c(x) \triangleq  |Ax|^2-b$ is a smooth map in which $|\cdot|^2$ operates element-wise on $Ax = [\langle a_1, x\rangle, \ldots, \langle a_m, x\rangle]^\top$ and $b=[b_1,\ldots,b_m]^\top$. In the rest of the paper, $A\in\reals^{m\times n}$ such that $A=[a_1, a_2, \ldots, a_m]^\top$ denotes the measurement matrix. 



	\tcb{In \cite{duchi2019solving}, Duchi and Ruan have demonstrated that under standard statistical assumptions on the data generation process \eqref{intro_corrupted_phase}, $F$ satisfies the sharpness condition (formalized later as Assumption \ref{ass:sharpness}) with high probability. Crucially, the sharpness property implies that the set of global minimizers of $F(\cdot)$ is exactly $\{x_\star, -x_\star\}$, where $x_\star$ is the true signal vector. Since we can only observe the magnitude of the measurements $a_i^\top x_\star$ for $i = 1,2,\ldots,m$, we can only recover $x_\star$ up to a potential sign 
		flip. Thus, we provide the following definition.
		\begin{definition}
			\label{def:eps-optimal}
			${x_\epsilon}\in\mathbb{R}^n$ is an $\epsilon$-optimal solution if {$\Delta(x_\epsilon)\leq\epsilon$, where}
			\begin{align}
				\label{eq:Delta}
				\Delta(x)\triangleq \min\{\|x-x_\star\|_2,\|x+x_\star\|_2\}.
			\end{align}
	\end{definition}}
In \cite{duchi2019solving}, it is shown that \eqref{duchi_l1_ori} 
possesses better recoverability than the median truncated Wirtinger flow algorithm \cite{zhang2016provable} based on the $\ell_2$-loss. 

\subsection{Existing Algorithms and Challenges}
In the literature, two kinds of algorithms have been proposed for solving \eqref{duchi_l1_ori}: \tcb{\textit{(i)}} subgradient-type algorithms, and \tcb{\textit{(ii)}} proximal-linear (PL)-type algorithms. 

\tcb{First}, we review existing subgradient-type algorithms~\cite{davis2018subgradient,davis2020nonsmooth} 
which can handle \eqref{duchi_l1_ori} 
while avoiding (inexact) 
subproblem solves. 
Polyak subgradient descent (\psub) 
is investigated in \cite{davis2020nonsmooth}:
\begin{equation*}
	x^{k+1} = x^k - \left(F(x^k) - F(x_\star)\right)\xi_k/\|\xi_k\|_2^2,\quad \xi_k\in\partial F(x^k),\quad k\in\mathbb{N}.
\end{equation*}
Subgradient algorithms with geometrically decaying step sizes (\gsub) 
are proposed in \cite{davis2018subgradient}: 
\begin{equation}\label{descent_subgradient_geome}
	x^{k+1} = x^k - \lambda_k \xi_k/\|\xi_k\|_2,\quad \xi_k\in\partial F(x^k),\quad \lambda_k = \lambda_0 q^k, \quad k\in\mathbb{N},
\end{equation}
{where $\lambda_0>0$ and $q\in (0,1)$ are constant algorithm parameters.}
For both algorithms, local linear convergence has been shown. 
{\psub} only works for noiseless robust phase retrieval \eqref{intro_noiseless_phase} as it relies on the value of $F(x_\star)$, 
and knowing a lower bound is 
not sufficient to establish convergence for \psub. {Moreover, as discussed in \cite{davis2020nonsmooth}, using fixed step sizes for subgradient algorithms only leads to suboptimal solutions.} {On the other hand, for 
	\gsub,} there is still no practical guidance for properly choosing the hyperparameters ($\lambda_0$ and $q$). 
More precisely, for any fixed $\gamma\in (0,1)$, the results in~\cite{davis2020nonsmooth} require setting $\lambda_0 = \gamma \lambda_s^2/(L B_\xi)$ and $q = \sqrt{1-(1-\gamma)(\lambda_s/B_\xi)^2}$, explicitly depending on the \textit{unknown} quantity $\lambda_s$ (see Assumption~\ref{ass:sharpness}), where $B_\xi \triangleq \sup\{\|\xi\|_2:\ \xi\in\partial F(x),\ x\in \mathcal{T}_\gamma\}$ and $\mathcal{T}_\gamma = \{x\in\mathbb{R}^n: \Delta(x)\leq \gamma\lambda_s/L\}$. 
In summary, for \eqref{duchi_l1_ori} with measurements as in~\eqref{intro_corrupted_phase}, no \textit{practical} hyper-parameter choice is known for~\cite{davis2018subgradient,davis2020nonsmooth} with theoretical guarantees, and under improper step size choices, these algorithms might not perform well or even possibly fail to converge~\cite{davis2018subgradient, davis2020nonsmooth, zheng2023new}.

\tcb{Next}, we review proximal-linear-type algorithms. For any given $z,y\in\reals^n$ and $t>0$, {let}\looseness=-5 
\begin{equation}\label{def-Ft}
	F(z;y) \triangleq h(c(y) + J_c(y)(z-y)),\quad 
	F_t(z;y) \triangleq F(z;y) + \frac{1}{2t}\|z-y\|_2^2,
\end{equation}
where $J_c(\cdot){\in\reals^{m\times n}}$ denotes the Jacobian of $c(\cdot)$ and can be written explicitly as $J_c(y) = 2\mbox{diag}(Ay)A$. In one typical iteration of a 
PL-type algorithm, {one \textit{inexactly} solves a subproblem of the form:} 
\be\label{PL}
x^{k+1} \approx \argmin_{x\in \mathbb{R}^{n}}\ F_{t_k}(x;x^k),
\ee
where $t_k>0$ is the chosen step size, and ``$\approx$" means that 
{$x^{k+1}$ is an ``inexact" solution to the subproblem in \eqref{PL}.}
\lz{Using fixed step sizes $t_k = L^{-1}$ for all $k\in\mathbb{N}$, 
	with $L\triangleq 2\|A\|_2^2/m$, \cite{duchi2019solving} has proposed the proximal linear algorithm. They establish a local quadratic convergence rate for the \pl{} method in terms of the iteration counter $k$, i.e., in the number of minimizations of the form $\argmin_x F_{t_k}(x;x^k)$ when 
	\sa{the $k$-th subproblem in \eqref{PL} is solved to $\epsilon_k$-suboptimality in function values such that $\epsilon_k\leq 2^{-2^k}$ for $k\geq 0$.} 
	However, the \textit{total complexity}, \sa{i.e., total number of gradient calls required for all inner iterations,} for the \pl{} method remains unknown. This is because the cost of solving the subproblems to such high precision is not accounted for in their analysis. It is worth emphasizing that closed-form solutions to these subproblems are unavailable, and in practice, one cannot compute them exactly using iterative methods.}
On the other hand, for the numerical experiments in~\cite{duchi2019solving}, each subproblem in the form of
\eqref{PL} was inexactly solved by the proximal operator graph splitting (\texttt{POGS}) method~\cite{parikh2014block}, terminated as suggested in~\cite{parikh2014block}, i.e., when the primal and dual residuals satisfy a predetermined threshold---that said, the convergence analysis for this strategy was not provided in \cite{duchi2019solving}. 

To get better control over the cost of solving \eqref{PL}, under the same fixed step sizes, \cite{zheng2023new} has proposed the Inexact Proximal Linear (\ipl) algorithm that solves \eqref{PL} inexactly using one of the following 
{\textit{inexact} termination conditions:}
\begin{align}
	\label{low-high-0}
	{
		F_{t_k}(x^{k+1};x^k)-\min_{x\in \mathbb{R}^n}\ F_{t_k}(x;x^k)\leq
		\begin{cases}
			\rho_l\left(F(x^k)-F_{t_k}(x^{k+1};x^k)\right) &\mbox{(\textbf{LAC-exact})}\\
			\frac{\rho_h}{2t_k}\|x^{k+1}-x^k\|_2^2 &\mbox{(\textbf{HAC-exact})}
	\end{cases}}%
\end{align}
where $\rho_l>0$, $\rho_h\in (0,1/4)$ are given positive constants. Here, LAC is short for the low accuracy condition, and HAC is short for the high accuracy condition. {Since $\min_x F_{t_k}(x;x^k)$ is not known in practice, to verify these conditions one needs to work with sufficient conditions for 
	\eqref{low-high-0} obtained by replacing} 
$\min_{x\in \mathbb{R}^n}F_{t_k}(x;x^k)$ with the dual function values of \eqref{PL}. 
In~\cite{zheng2023new}, \eqref{PL} is solved 
{through applying the {Accelerated Proximal Gradient} algorithm (\apg{}) 
	given in~\cite{tseng2008accelerated} to}
the dual problem of \eqref{PL}. 
In~\cite{zheng2023new}, it is proven that \ipl{} can compute an $\epsilon$-optimal solution for \eqref{duchi_l1_ori} within $\cO(1/\epsilon)$ inner iterations in total for inexactly solving a sequence of subproblem in the form of \eqref{PL}, 
{establishing} the total complexity for the 
{double}-loop algorithm \ipl{}. Numerical experiments in \cite{zheng2023new} empirically show that \ipl{} enjoys better numerical 
{performance} in terms of CPU time compared to \pl{} {method.}  
However, the efficiency of \ipl{} is still unsatisfying due to the sublinear convergence rate; that said, there is room for further improvement for \ipl{}, and it is one objective of this paper.

Stochastic algorithms are also studied in \cite{duchi2018stochastic, davis2018stochastic, davis2019stochasticsub0, davis2019stochasticsub1, davis2019stochastic} where the proximal-linear type and the subgradient-type methods are unified. In addition, \cite{davis2020nonsmooth} and \cite{zheng2024smoothed} also analyze the nonconvex landscape. These type of methods and their analysis are beyond the 
scope of our paper. 

\tcb{We also notice the algorithm \texttt{Robust-AM} proposed in the recent work ~\cite{kim2024robust}. They solve the robust phase retrieval problem via the optimization problem:
$\min_{x\in\mathbb{R}^n}\frac{1}{m}\sum_{i=1}^m \left||a_i^\top x| - \sqrt{b_i}\right|.$
The authors solve it by iteratively solving the following subproblem: $x^{k+1} = \argmin_{x\in\mathbb{R}^n} \frac{1}{m}\sum_{i=1}^m |(a_i^\top x) - \mbox{sign}(a_i^\top x^k)\sqrt{b_i}|$.
\cite{kim2024robust} prove the main-iteration local linear convergence. Although the subproblem can be solved exactly via linear programming, \cite{kim2024robust} use the Alternating Direction Method of Multipliers (ADMM) to solve it inexactly for better numerical efficiency.}

\subsection{
{Proposed} Algorithms: \adasub{} and \adaipl{}}
{In this paper, we propose to incorporate an adaptive step size strategy within subgradient and inexact-proximal linear algorithm frameworks for improving 
their convergence behavior both in theory and practice. Next, we give some definitions. Throughout we set $L\triangleq 2\|A\|_2^2/m$.} {For a 
$\tilde{p}\in (0,1)$ such that $m\tilde{p}\in\mathbb{N}_+$, let} 
\begin{equation}\label{def_r_med}
{r_i(\cdot) \triangleq } |\fprod{a_i,\cdot}^2 - b_i|,\quad {\forall}~i\in [m];\quad r^{\Tilde{p}}(\cdot) \triangleq \mbox{ the }\Tilde{p} \mbox{-th quantile of } \{r_i(\cdot)\}_{i=1}^m, 
\end{equation}
i.e., $r^{\Tilde{p}}(x)$ denotes the $(m\Tilde{p})$-th order statistic of absolute residuals $\{r_i(x)\}_{i=1}^m $ {at any given} $x$. The quantile operator is known for its robustness to outliers \tcb{\cite{koenker1978regression}}; hence, it helps us design a step size strategy 
robust to the corrupted measurements for 
\eqref{duchi_l1_ori}, i.e., when the true measurement vector is corrupted by a
sparse noise vector with non-zero entries having arbitrarily large magnitudes.
\subsubsection{\adasub{}}
We propose a subgradient method with adaptive step sizes (\adasub). The step size $\alpha_k$ at iteration $k\in\mathbb{N}$ is chosen as follows:
\begin{equation}\label{choice_alpha_subg}
\alpha_k = G~r^{\Tilde{p}}(x^k),\quad \forall~k\in\mathbb{N},
\end{equation}
where $G>0$ is an algorithm parameter. 
The \adasub{} iterates are computed as 
\begin{equation}\label{iter_adasub}
x^{k+1} = x^k - \alpha_k\xi^k/\|\xi^k\|_2^2,\quad \xi^k\in\partial F(x^k),\quad \forall~k\in\mathbb{N}.
\end{equation}
{According to \cite[Theorem 10.6, Corollary 10.9]{RockWets98}, the subdifferential of weakly convex function $F$ has the form} {$\partial F(x)\triangleq [{J_c(x)}]^\top \partial h(c(x))$ for $x\in\mathbb{R}^n$,} {where $J_c(x)\in\reals^{m\times n}$ denotes the Jacobian of $c$ at $x$.} A subgradient $\xi_k\in\partial F(x^k)$ can be computed as 
{$\xi^k = \frac{2}{m}\sum_{i=1}^m \fprod{a_i,x^k}\mbox{sign}\Big(\fprod{a_i,x^k}^2-b_i\Big)a_i$ for any $k\in\mathbb{N}.$} %
Our proposed method \adasub{} is formally 
stated in Algorithm \ref{alg:adaptive-sub}.

\begin{algorithm}[ht]  
\caption{Subgradient Algorithm with Adaptive Step Sizes (\adasub{})}
\label{alg:adaptive-sub}
{
	\begin{algorithmic}
		\STATE \textbf{Input:} Initial point $x^0\in\mathbb{R}^n$, parameter $G>0,$ percentile $\tilde{p}\in (0,1)$ such that $m\tilde{p}\in\mathbb{N}_+$.
		\FOR{$k = 0, 1, \ldots, $}
		\STATE{Update $x^{k+1}$ using \eqref{iter_adasub} with $\alpha_k$ given in \eqref{choice_alpha_subg}.}
		\ENDFOR
\end{algorithmic}}%
\end{algorithm}

Note that in our choice of the step size $\alpha_k$, we adopt the quantile design, i.e., $r^{\Tilde{p}}(x^k)$; the main motivation behind our choice is that under a fairly reasonable data generation process discussed in \Cref{sec:data-gen}, one can show that $\alpha_k = \Theta(F(x^k) - F(x_\star))$
for all $k\in \mathbb{N}$ with \textit{high probability}. Therefore, \adasub{} will exhibit a similar convergence behavior with the Polyak subgradient algorithm. More precisely, we prove that for sufficiently small $G>0$, 
\adasub{} enjoys a local linear convergence.

\subsubsection{\adaipl{}}
Second, we introduce the inexact proximal linear algorithm with adaptive step sizes (\adaipl{}). 
Given some positive constant $G>0$, 
let
\begin{equation}\label{choice_t_diminish}
t_k \triangleq \min\{L^{-1},G~ r^{\Tilde{p}}(x^k)\}.   
\end{equation}
{In \adaipl{}, iterates are computed} by \textit{inexactly} solving \eqref{PL} with $t_k$ chosen {as in} \eqref{choice_t_diminish} such that {for all $k\in \mathbb{N}$ either 
\textbf{(LAC-exact)} 
or \textbf{(HAC-exact)} given in 
\eqref{low-high-0} holds}. {Since $\min_{x\in \mathbb{R}^n}\ F_{t_k}(x;x^k)$ appearing in 
\eqref{low-high-0} is not available in practice, in \adaipl{} we replace 
\eqref{low-high-0} with a practical one as described next. Given 
$t_k\leq \frac{1}{L} = \frac{m}{2\|A\|_2^2}$,} 
\eqref{PL} is convex and can be equivalently written as 
\be\label{PL-rewrite-2}
\min_{z\in \mathbb{R}^n} \ H_k(z) \triangleq \frac{1}{2t_k}\|z\|_2^2 + \|B_kz-d_k\|_1,
\ee
{after the change of variables:
$z \triangleq x -x^k$ and setting}
$B_k \triangleq \frac{2}{m}\mbox{diag}(Ax^k)A$ and 
$d_k \triangleq \frac{1}{m}\left(b - |Ax^k|^2\right)$. The problem in~\eqref{PL-rewrite-2} has the following min-max and dual forms:
\begin{subequations}
\begin{align}
	\min_{z\in \mathbb{R}^n}\max_{\lambda\in \mathbb{R}^m:~\|\lambda\|_\infty\leq 1} H_k(z,\lambda) &\triangleq \frac{1}{2t_k}\|z\|_2^2+\lambda^\top (B_kz-d_k),\label{sub_minmax}\\
	\max_{\lambda\in \mathbb{R}^m:~\|\lambda\|_\infty\leq 1} D_k(\lambda) &\triangleq -\frac{t_k}{2}\left\|\lambda^\top B_k\right\|_2^2-\lambda^\top d_k.\label{sub_dual}
\end{align}
\end{subequations}
Let
$z^k(\lambda) \triangleq -t_kB_{k}^\top \lambda$ and $\lambda^k(z) \triangleq \mbox{sign}\left(B_kz - d_k\right)$. Using $z^{k} \triangleq x^{k+1} - x^k$, we can rewrite 
\eqref{low-high-0} as
\begin{equation}
\label{eq:low-high}
{
	\begin{aligned}
		{H_k(z^{k}) - \min_{z\in\mathbb{R}^n} H_k(z)\leq
			\begin{cases}
				\rho_l\left(H_k(0) - H_k(z^{k})\right), & \mbox{(\textbf{LAC-exact})}\\
				\frac{\rho_h}{2t_k}\|z^{k}\|_2^2, & \mbox{(\textbf{HAC-exact}),}
		\end{cases}}
\end{aligned}}%
\end{equation}
{where $\rho_l>0$, $\rho_h\in (0,1/4)$ are given positive constants.} As $\min_{z\in\mathbb{R}^n} H_k(z)$ 
{may not be easily available, we provide \textit{sufficient} conditions 
for 
(\textbf{LAC-exact}) and (\textbf{HAC-exact}) conditions in \eqref{eq:low-high} that can be checked in practice.} 
Due to weak duality, we have
\begin{equation*}
H_k(z^{k}) - \min_{z\in\mathbb{R}^n} H_k(z)\leq H_k(z^{k}) - D_k(\lambda),\quad \forall~\lambda\in\mathbb{R}^m:\ \|\lambda\|_\infty\leq 1.
\end{equation*} 
{Consider a generic solver 
such that when initialized from an arbitrary $(z^{k}_0,\lambda^{k}_0)$, it generates a primal-dual iterate sequence $\{(z^{k}_j,\lambda^{k}_j)\}_{j=0}^\infty\subset \mathbb{R}^n\times \mathbb{R}^m$ for the $k$-th subproblem
satisfying 
$\sup_{j\in\mathbb{N}}\|\lambda_{k}^j\|_\infty\leq 1$, and after 
$j_k$ iterations it computes a primal-dual pair $(z^k,\lambda^k) =(z^{k}_{j_k},\lambda^{k}_{j_k})$ such that
\begin{equation}
	\label{eq:low-high-practical}
	{
		\begin{aligned}
			H_k(z^{k}) - D_k(\lambda^k)\leq
			\begin{cases}
				\rho_l\left(H_k(0) - H_k(z^{k})\right), & \mbox{(\textbf{LAC})}\\
				\frac{\rho_h}{2t_k}\|z^{k}\|_2^2, & \mbox{(\textbf{HAC}).}
			\end{cases}
	\end{aligned}}%
\end{equation}
Since \eqref{eq:low-high-practical} is a sufficient condition on \eqref{eq:low-high}, for \adaipl{} we can adopt the practical condition given in \eqref{eq:low-high-practical} instead of the condition in \eqref{eq:low-high}.}
\begin{lemma}\label{sufficiency_pd}
{\emph{(\textbf{LAC})} and \emph{(\textbf{HAC})} 
	imply 
	\emph{(\textbf{LAC-exact})} and \emph{(\textbf{HAC-exact})}, 
	respectively.}
	\end{lemma}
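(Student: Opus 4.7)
The plan is to reduce the claim to a one-line application of weak duality for the min-max/dual pair \eqref{sub_minmax}-\eqref{sub_dual}. The key observation is that, by construction, the subproblem \eqref{PL-rewrite-2} (viewed via its min-max reformulation \eqref{sub_minmax}) satisfies $\min_{z\in\reals^n} H_k(z) = \min_{z}\max_{\|\lambda\|_\infty\leq 1} H_k(z,\lambda)$, while the dual objective in \eqref{sub_dual} arises as $D_k(\lambda) = \min_{z\in\reals^n} H_k(z,\lambda)$ after performing the inner minimization in closed form (the minimizer being $z_k(\lambda) = -t_k B_k^\top \lambda$). Therefore, for any $\lambda\in\reals^m$ with $\|\lambda\|_\infty\leq 1$, weak duality yields $D_k(\lambda)\leq \min_{z\in\reals^n} H_k(z)$. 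Since the generic solver is assumed to produce iterates satisfying $\sup_{j\in\mathbb{N}}\|\lambda^k_j\|_\infty\leq 1$, the terminal dual iterate $\lambda^k$ is feasible and thus $D_k(\lambda^k)\leq \min_{z\in\reals^n} H_k(z)$.

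With this in hand, I would assemble the proof by the inequality chain
\[
H_k(z^k) - \min_{z\in\reals^n} H_k(z) \;\leq\; H_k(z^k) - D_k(\lambda^k),
\]
which holds unconditionally on $\rho_l, \rho_h$. Combining this with \textbf{(LAC)} in \eqref{eq:low-high-practical} immediately gives
\[
H_k(z^k) - \min_{z\in\reals^n} H_k(z) \;\leq\; \rho_l\bigl(H_k(0) - H_k(z^k)\bigr),
\]
which is \textbf{(LAC-exact)} in \eqref{eq:low-high}. The same chaining with \textbf{(HAC)} yields \textbf{(HAC-exact)}. There is essentially no obstacle here: the entire content of the lemma is that the practical, verifiable duality-gap surrogate $H_k(z^k)-D_k(\lambda^k)$ upper bounds the true but inaccessible optimality gap $H_k(z^k)-\min_z H_k(z)$; the only care needed is to confirm the dual feasibility requirement $\|\lambda^k\|_\infty\leq 1$, which is guaranteed by hypothesis on the generic solver.
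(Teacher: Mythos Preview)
Your proposal is correct and is essentially the same argument the paper gives: the paper explicitly notes (just before stating the lemma) that weak duality yields $H_k(z^k)-\min_z H_k(z)\leq H_k(z^k)-D_k(\lambda)$ for any dual-feasible $\lambda$, and then observes that \eqref{eq:low-high-practical} is therefore a sufficient condition for \eqref{eq:low-high}. Your write-up adds the explicit identification $D_k(\lambda)=\min_z H_k(z,\lambda)$ and the dual-feasibility check $\|\lambda^k\|_\infty\leq 1$, which are the right details to make the one-line argument rigorous.
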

	
	
\begin{algorithm}[ht]  
\caption{Inexact Proximal Linear Algorithm with Adaptive Step Sizes (\adaipl{})}
\label{alg:adaptive-IPL}
{
	\begin{algorithmic}
		\STATE \textbf{Input:} Initial point $x^0\in\reals^n$, $\rho_l>0$ or $\rho_h\in (0,1/4)$, percentile $\tilde{p}\in (0,1)$ s.t. $m\tilde{p}\in\mathbb{N}_+$.
		\STATE {Set \texttt{Cond} to either \textbf{(LAC)} or to \textbf{(HAC)} in \eqref{eq:low-high-practical}}
		\FOR{$k = 0, 1, \ldots, $}
		\STATE {Compute $t_k$ as in \eqref{choice_t_diminish}}
		\STATE {Compute $x^{k+1}$ by inexactly solving \eqref{PL} such that \texttt{Cond} holds}
	\ENDFOR
	\end{algorithmic}}%
\end{algorithm}

A 
{pseudocode} for \adaipl{} is given in Algorithm~\ref{alg:adaptive-IPL}. It is only a prototype algorithm {because the method for inexactly solving \eqref{PL} is not fixed at this point. 
We will discuss this issue later} in Section \ref{subsec:subproblem_solvers}. For \adaipl{}, the step size $t_k$ uses the quantile design such that under the data generation process introduced in Section \ref{sec:data-gen}, $t_k = \Theta(\Delta(x^k))$ for all $k\geq 0$ with high probability, {where $\Delta(\cdot)$ is defined in~\eqref{eq:Delta}}. Analysis in Section \ref{sec:conv_adaipl} will show that under adaptive step sizes, we can reach a better balance between main iteration complexity and subproblem iteration complexity compared to \ipl{} with fixed step sizes {proposed in~\cite{zheng2023new}}. We will show that for any choice of {algorithm parameter} $G>0$, \adaipl{} enjoys local linear convergence in terms of {total iteration complexity associated with 
inexactly solving all the subproblems in the form of \eqref{PL}.}
\subsubsection{Summary of Contributions}\label{sec:summary_contribution}
We propose \adasub{} and \adaipl{} with adaptive step sizes chosen based on the quantiles of absolute residuals. To the best of our knowledge, we are the first to use quantile-based adaptive step sizes for robust phase retrieval to design practical methods that do not require intensive hyper-parameter tuning; moreover, unlike \cite{davis2018subgradient}, the convergence guarantees of our methods do not need the algorithm parameters to satisfy some conditions involving unknown problem constants. The proposed algorithms enjoy the following advantages over existing algorithms.
\renewcommand{\arraystretch}{1.2}
\begin{table}
\centering
{
\begin{tabular}{|c|c|c|c|c|}
	\hline
	\textbf{Algorithm} & \textbf{Ideal Complexity} & \textbf{Step sizes} & \textbf{Tuning} \\
	\hline
	\pl{}\cite{duchi2019solving}  & Unknown & Fixed & N/A \\
	\hline
	\ipl{}\texttt{-LAC}\cite{zheng2023new} & $\cO(C_S^2\kappa_0^2\|x_\star\|_2/\epsilon)$ & Fixed & N/A\\
	\hline
	\ipl{}\texttt{-HAC} \cite{zheng2023new} & $\cO(C_S^2\kappa_0^3\|x_\star\|_2/\epsilon)$ & Fixed & N/A\\
	\hline
	\psub\cite{davis2020nonsmooth}& $\cO(\kappa_0^{2}\log \frac{1}{\epsilon})$ & Adaptive & Hard\\
	\hline
	\gsub \cite{davis2018subgradient} & $\cO(\kappa_0^{2}\log \frac{1}{\epsilon})$ & Predetermined & Hard\\
	\hline
	\adasub{} \textbf{(this work)} & $\cO(\kappa_0^2\log \frac{1}{\epsilon})$ & Adaptive & Easy\\
	\hline
	\adaipl{}\texttt{-LAC} \textbf{(this work)} & $\cO(C_S\kappa_0\log \frac{1}{\epsilon})$ & Adaptive & Easy\\
	\hline
	\adaipl{}\texttt{-HAC} \textbf{(this work)} & $\cO(C_S\kappa_0\log \frac{1}{\epsilon})$ & Adaptive & Easy\\
	\hline
	\end{tabular}}%
	\caption{ Comparison of algorithms for RPR. In the first column, for \ipl{} and \adaipl{}, ``\texttt{-LAC}" and ``\texttt{-HAC}" correspond to \lac{} and \hac{} conditions, respectively. The second column compares the total complexity under the ideal choices of hyperparameters for finding an $\epsilon$-optimal point when $\epsilon>0$ is small enough. 
$\kappa_0\geq 1$ denotes the condition number, $C_S{\geq 1}$ is a factor related to solving \eqref{PL}, and $\cO(\cdot)$ only hides numerical constants. The third column characterizes the types of step sizes for each algorithm. The fourth column summarizes the difficulty level of hyperparameter tuning. ``N/A" here means that \pl{} and \ipl{} that employ fixed step sizes do not need tuning\tcb{---}refer to the second bullet point in Section \ref{sec:summary_contribution} for explanations of ``Easy" and ``Hard".}\label{tab:perfect_situation_modified}
\end{table}
\begin{itemize}[leftmargin=*]
\item Our algorithms enjoy the best theoretical convergence rates. For finding an $\epsilon$-optimal solution, we define the total complexity of subgradient-type algorithms as the number of iterations and the total complexity for proximal-linear-type algorithms as the total iterations used for inexactly solving all the subproblems \eqref{PL}. 
Under the ideal situation in terms of hyperparameters that will be explained in Section \ref{sec:conv_adaipl}, Table \ref{tab:perfect_situation_modified} summarizes the total complexity 
of all candidate algorithms for finding an $\epsilon$-optimal solution where $\epsilon$ is sufficiently small. Here, $\kappa_0\geq 1$ is the condition number of RPR that will be explained in Section \ref{sec:data-gen}, $C_S = \sqrt{m}\max_{i\in [m]} \|a_i\|_2/\|A\|_2$ is a constant factor related to the complexity of solving \eqref{PL}, and we treat $\rho_l,\rho_h$ in \lac{} and \hac{} in \eqref{eq:low-high-practical} as numerical constants.
\adasub{} enjoys a {local} linear rate comparable to other subgradient algorithms, and \adaipl{} enjoys a better {local} 
linear rate in terms of the condition number.
\item Our algorithms enjoy a linear rate even under imperfect choices of hyper-parameters: \adaipl{} shows local linear convergence for any $G>0$, and \adasub{} enjoys local
linear convergence when $G$ is sufficiently small. In contrast, \psub \cite{davis2020nonsmooth} relies on the value of $F(x_\star)$, and \gsub \cite{davis2018subgradient} only converges under their specific choice of parameters that depend on unknown constants for \eqref{duchi_l1_ori}. The difficulty of tuning hyperparameters is summarized in Table \ref{tab:perfect_situation_modified}.
\item 
We conduct numerical 
tests comparing \adasub{} and \adaipl{} 
{against the other state-of-the-art methods for solving the RPR problem. 
Empirical results 
show that both \adasub{} and \adaipl{} are robust to parameter selection and perform better 
than the others.} 
\end{itemize}

\textbf{Notations.} For any $m_0\in\mathbb{N}_+$, we denote $[m_0] = \{1,2\ldots m_0\}$. $\mathbb{S}^{n-1} = \{x\in\mathbb{R}^n:\|x\|_2 = 1\}$. $\mathbf{1}[\cdot]$ is the indicator function that takes logic statements as its argument; it returns $1$ when 
{its argument is true} and $0$ otherwise. For $x\in\mathbb{R}$, we let $\mbox{sign}(x) = \mathbf{1}[x>0] - \mathbf{1}[x<0]$. We also adopt the Landau notation, i.e., for $f, g: \mathbb{R}_{+} \rightarrow \mathbb{R}_{+}$, we use $f=\mathcal{O}(g)$ and $f=\Omega(g)$ if there exist some $C^0>0$ and $n^0 \in \mathbb{R}_{+}$ such that $f(n) \leq C^0 g(n)$ and $f(n) \geq C^0 g(n)$, respectively, for all $n \geq n^0$; moreover, if $f=\mathcal{O}(g)$ and $f=\Omega(g)$, then we use $f=\Theta(g)$.

\textbf{Organization.} 
Section \ref{sec:property_F} introduces basic properties of $F(\cdot)$. Section \ref{sec:data-gen} discusses the data generation process and key conditions for our adaptive step sizes, and in Section \ref{sec:proof_stat}, we give the proof of
\Cref{thm_assumptions_high_prob}, which provides a proper statistical foundation for our convergence analysis.\looseness=-5 

Section~\ref{sec:adasub} establishes the convergence rate of \adasub{}, and~\Cref{sec:conv_adaipl} shows the convergence rate of \adaipl{}. In \Cref{sec:proof_adaipl}, we provide the proofs of the results given in~\Cref{sec:conv_adaipl}. Finally, after the numerical experiments in \Cref{sec:main_numerical}, we conclude the paper with a brief discussion in~\Cref{sec:conclusions}.
\section{Basic Properties of $F(\cdot)$}\label{sec:property_F}
{In this section, we provide some basic results regarding the properties of $F(\cdot)$. 
Throughout this section, $L\triangleq 2\|A\|_2^2/m$.}
\begin{lemma}[Lemma 6 in \cite{zheng2023new}, local Lipschitz 
continuity]\label{Lip_F}
For any $r\geq 0$, 
{
$$\sup\Big\{\frac{|F(x) - F(y)|}{\|x-y\|_2}:\ x,y\in \mathbb{R}^n, \Delta(x)\leq r,~\Delta(y)\leq r,~x\neq y\Big\}\leq L(\|x_\star\|_2 + r),$$}%
\tcb{where $\Delta(\cdot)$ is defined in~\eqref{eq:Delta}.}
\end{lemma}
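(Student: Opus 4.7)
The plan is to exploit the composite structure $F = h\circ c$ with $h(z)=\frac{1}{m}\|z\|_1$ and $c(x)=|Ax|^2-b$, so that by the reverse triangle inequality of the $\ell_1$ norm
\[
|F(x)-F(y)| \;\leq\; \frac{1}{m}\big\||Ax|^2-|Ay|^2\big\|_1 \;=\; \frac{1}{m}\sum_{i=1}^m \big|\langle a_i,x\rangle^2-\langle a_i,y\rangle^2\big|.
\]
The $b_i$ terms cancel, so the problem reduces to controlling a quadratic difference that does not depend on the noise.

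Next I would factor each summand as $|\langle a_i,x-y\rangle|\cdot|\langle a_i,x+y\rangle|$ and apply Cauchy--Schwarz to the sum over $i$, producing
\[
\sum_{i=1}^m |\langle a_i,x-y\rangle|\,|\langle a_i,x+y\rangle| \;\leq\; \|A(x-y)\|_2\,\|A(x+y)\|_2 \;\leq\; \|A\|_2^2\,\|x-y\|_2\,\|x+y\|_2.
\]
Recalling $L = 2\|A\|_2^2/m$, this gives the clean intermediate estimate $|F(x)-F(y)|\leq \frac{L}{2}\,\|x-y\|_2\,\|x+y\|_2$. The Lipschitz constant we want to extract is therefore $\frac{L}{2}\|x+y\|_2$; the remaining task is to bound $\|x+y\|_2$ uniformly on the sublevel set $\{\Delta(\cdot)\leq r\}$.

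For this final bound I would do a short case analysis on the signs realizing $\Delta(x)$ and $\Delta(y)$. By definition of $\Delta$, there exist $s_x,s_y\in\{+1,-1\}$ with $\|x-s_x x_\star\|_2\leq r$ and $\|y-s_y x_\star\|_2\leq r$. If $s_x=-s_y$, then $x+y=(x-s_x x_\star)+(y-s_y x_\star)$ and the triangle inequality yields $\|x+y\|_2\leq 2r$. If $s_x=s_y=s$, then $x+y=(x-s x_\star)+(y-s x_\star)+2s x_\star$ gives $\|x+y\|_2\leq 2r+2\|x_\star\|_2$. In both cases $\|x+y\|_2\leq 2(\|x_\star\|_2+r)$, which combined with the previous display produces the claimed bound $L(\|x_\star\|_2+r)$.

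The argument is essentially a chain of standard inequalities; the only non-routine step is the sign case analysis for $\|x+y\|_2$, but even this is immediate once one remembers that $F(-v)=F(v)$ makes the two sign choices symmetric. I would expect no genuine obstacle.
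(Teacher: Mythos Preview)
Your proof is correct. The paper does not give its own proof of this lemma (it is quoted from \cite{zheng2023new}), so there is no original argument to compare against; your chain of inequalities---reverse triangle inequality, factoring the quadratic difference, Cauchy--Schwarz on the sum, and the sign case analysis to bound $\|x+y\|_2$---is exactly the natural route and matches the spirit of the paper's proof of the closely related Lemma~\ref{lip_F1} (which uses Young's inequality $|\langle a_i,u\rangle\langle a_i,v\rangle|\le\tfrac12(\langle a_i,u\rangle^2+\langle a_i,v\rangle^2)$ in place of your Cauchy--Schwarz step, arriving at the same constant $L/2$).
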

\begin{lemma}[
Absolute deviation bound]\label{lip_F1}
For all $x\in\mathbb{R}^n$, it holds that
\begin{equation}
\label{eq:F-F*-bound}
|F(x) - F(x_\star)|\leq \frac{L}{2}\|x-x_\star\|_2\|x+x_\star\|_2.
\end{equation}
\end{lemma}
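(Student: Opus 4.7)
The plan is to exploit the reverse triangle inequality to peel off the (possibly noisy) measurements $b_i$ from each summand, reducing the problem to bounding a sum of terms of the form $|\langle a_i,x\rangle^2 - \langle a_i, x_\star\rangle^2|$, which factors nicely. Note that the structure of $b$ (corrupted or not) plays no role here: we only need $F$ to be a mean of $|\langle a_i,\cdot\rangle^2 - b_i|$ terms, and the inequality is obtained purely by manipulating absolute values.

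\noindent First, I would write
\[
F(x) - F(x_\star) = \frac{1}{m}\sum_{i=1}^m\Big(|\langle a_i,x\rangle^2 - b_i| - |\langle a_i,x_\star\rangle^2 - b_i|\Big),
\]
and apply $||u|-|v||\leq |u-v|$ term by term, which yields the $b_i$-independent bound
\[
|F(x) - F(x_\star)| \leq \frac{1}{m}\sum_{i=1}^m |\langle a_i,x\rangle^2 - \langle a_i,x_\star\rangle^2|.
\]
Next, I would factor $\langle a_i,x\rangle^2 - \langle a_i,x_\star\rangle^2 = \langle a_i, x-x_\star\rangle\, \langle a_i, x+x_\star\rangle$ to rewrite the right-hand side as $\frac{1}{m}\sum_{i=1}^m |\langle a_i, x-x_\star\rangle|\cdot |\langle a_i, x+x_\star\rangle|$.

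\noindent Finally, I would apply Cauchy--Schwarz to the sum, obtaining
\[
\sum_{i=1}^m |\langle a_i,x-x_\star\rangle|\cdot|\langle a_i,x+x_\star\rangle| \leq \|A(x-x_\star)\|_2\cdot \|A(x+x_\star)\|_2 \leq \|A\|_2^2 \,\|x-x_\star\|_2\,\|x+x_\star\|_2,
\]
and the conclusion follows by the definition $L = 2\|A\|_2^2/m$.

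There is no significant obstacle here; the only subtlety is recognizing that the reverse triangle inequality eliminates $b_i$ entirely, so that corruption of the measurements does not enter the bound. The rest is a routine factorization followed by Cauchy--Schwarz.
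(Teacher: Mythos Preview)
Your proof is correct and follows essentially the same route as the paper: reverse triangle inequality to remove $b_i$, factor the difference of squares, then bound by the operator norm of $A$. The only cosmetic difference is in the final step, where the paper uses AM--GM (i.e., $|\langle a_i,u\rangle\langle a_i,v\rangle|\le \tfrac12(\langle a_i,u\rangle^2+\langle a_i,v\rangle^2)$ with $u,v$ the normalized directions) instead of your Cauchy--Schwarz on $\|A(x-x_\star)\|_2\|A(x+x_\star)\|_2$; both yield the same bound $L/2$.
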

\begin{proof}
When $x\in \{x_\star,-x_\star\}$, the relationship holds; otherwise, we have
\tcb{\begin{align*}
	\lvert F(x) - F(x_\star) \rvert
	\le \frac{1}{m}\sum_{i=1}^m 
	\left| (a_i^\top x)^2 - (a_i^\top x_\star)^2 \right| = \|x - x_\star\|_2 \, \|x + x_\star\|_2 \,
	\frac{1}{m}\sum_{i=1}^m 
	\left| u^\top a_i a_i^\top v \right|.
\end{align*} 
For $u \triangleq (x-x_\star)/\|x-x_\star\|_2$ and $v \triangleq (x+x_\star)/\|x+x_\star\|_2$, we get}
{
\begin{equation*}
	\frac{1}{m}\sum_{i=1}^m |u^\top a_ia_i^\top v|\leq \left(u^\top\left(\frac{1}{2m}\sum_{i=1}^m a_ia_i^\top\right)u + v^\top\left(\frac{1}{2m}\sum_{i=1}^m a_ia_i^\top\right)v\right)\leq L/2.
	\end{equation*}}%
	\tcb{The first inequality holds because $|u^\top a_ia_i^\top v|\leq \frac{1}{2}(a_i^\top u)^2 + \frac{1}{2}(a_i^\top v)^2 = \frac{1}{2}u^\top (a_ia_i^\top) u + \frac{1}{2}v^\top (a_ia_i^\top) v$. The second follows from 
$L = 2\|A\|_2^2/m = 2\|\sum_{i = 1}^m a_ia_i^\top\|_2/m$.}
This completes the proof.
\end{proof}


\begin{lemma}[\tcb{Local linear approximation}~\cite{duchi2019solving}] 
\label{thm:gen_weak}
The inequality {below holds} for any $x,y\in \mathbb{R}^n$:
\begin{equation}\label{rel:gen_weak1}
{\left|F(x)-F(x;y)\right|\leq \frac{1}{2t}\|x-y\|_2^2,}\quad  
{\forall~t\in(0, 1/L].}
\end{equation}
\end{lemma}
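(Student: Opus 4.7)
The plan is to exploit the compositional structure $F = h \circ c$, where $h(z) = \tfrac{1}{m}\|z\|_1$ is convex and $(1/m)$-Lipschitz with respect to $\|\cdot\|_1$, and $c(x) = |Ax|^2 - b$ is a coordinate-wise smooth map. Since $F(x) - F(x;y) = h(c(x)) - h(c(y) + J_c(y)(x-y))$, I would reduce the bound to controlling the linearization error of $c$ in the $\ell_1$-norm.

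First, I would apply the Lipschitz property of $h$ to get
\begin{equation*}
    |F(x) - F(x;y)| \;\leq\; \frac{1}{m}\bigl\|c(x) - c(y) - J_c(y)(x-y)\bigr\|_1.
\end{equation*}
Next, since $c_i(x) = (a_i^\top x)^2 - b_i$ and $\nabla c_i(y) = 2(a_i^\top y)a_i$, the $i$-th component of the linearization residual simplifies exactly:
\begin{equation*}
    c_i(x) - c_i(y) - \nabla c_i(y)^\top (x-y) \;=\; (a_i^\top x)^2 - (a_i^\top y)^2 - 2(a_i^\top y)\, a_i^\top(x-y) \;=\; \bigl(a_i^\top(x-y)\bigr)^2.
\end{equation*}
In particular this quantity is nonnegative, so the $\ell_1$-norm collapses to a sum of squares:
\begin{equation*}
    \bigl\|c(x) - c(y) - J_c(y)(x-y)\bigr\|_1 \;=\; \sum_{i=1}^m \bigl(a_i^\top(x-y)\bigr)^2 \;=\; \|A(x-y)\|_2^2.
\end{equation*}

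Combining the two steps gives $|F(x) - F(x;y)| \leq \tfrac{1}{m}\|A(x-y)\|_2^2 \leq \tfrac{\|A\|_2^2}{m}\|x-y\|_2^2 = \tfrac{L}{2}\|x-y\|_2^2$. Finally, for any $t \in (0, 1/L]$ one has $\tfrac{1}{2t} \geq \tfrac{L}{2}$, which yields the claimed inequality. There is no substantial obstacle here; the only point that requires a small observation is noting that the quadratic residual is nonnegative coordinate-wise, which is what lets the $\ell_1$-norm of the residual be identified with $\|A(x-y)\|_2^2$ and then bounded by $\|A\|_2^2 \|x-y\|_2^2$.
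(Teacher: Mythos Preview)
Your proof is correct. The paper does not include its own proof of this lemma---it simply cites \cite{duchi2019solving}---and your argument is precisely the standard one from that reference: use the Lipschitz property of $h$ to reduce to the linearization error of $c$, compute that error exactly as $(a_i^\top(x-y))^2$ componentwise, and bound the resulting sum via $\|A\|_2^2$.
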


{Next, similar to \cite{duchi2019solving}, {\cite{davis2018subgradient}, and \cite{davis2020nonsmooth}}, we make the following sharpness assumption.}
\begin{assumption}[Condition C1 in \cite{duchi2019solving}]\label{ass:sharpness}
There exists $\lambda_s>0$ such that
\begin{equation}\label{ineq:sharpness}
F(x)-F(x_\star)\geq \lambda_s\Delta(x),\quad \forall x\in\mathbb{R}^n,
\end{equation}
where $F$ is defined in \eqref{duchi_l1_ori}, and $\Delta(\cdot)$ is defined in~\eqref{eq:Delta}.
\end{assumption}

\begin{lemma}[Lemma 7 in \cite{zheng2023new}] 
\label{lip_F2}
{Under Assumption~\ref{ass:sharpness},} for any $r\geq 0$, 
\begin{align*}
\{x\in\mathbb{R}^n:\Delta(x)\leq E(r)\} \subseteq  \{x\in\mathbb{R}^n: F(x) - F(x_\star)\leq r\}
\subseteq  \{x\in\mathbb{R}^n: \Delta(x)\leq r/\lambda_s\},
\end{align*}
{implying $E(r)\leq r/\lambda_s$, where} $E(r){\triangleq} \left(\sqrt{L^2\|x_\star\|_2^2+4rL} - L\|x_\star\|_2\right)/(2L)$.
\end{lemma}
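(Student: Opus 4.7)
The plan is to establish the two set inclusions separately, then extract the scalar inequality $E(r)\leq r/\lambda_s$ as a direct corollary.

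For the \emph{right inclusion} $\{x: F(x) - F(x_\star) \leq r\} \subseteq \{x: \Delta(x)\leq r/\lambda_s\}$, I would just invoke the sharpness condition from \cref{ass:sharpness}: if $F(x) - F(x_\star)\leq r$, then $\lambda_s\Delta(x) \leq F(x)-F(x_\star) \leq r$, so $\Delta(x)\leq r/\lambda_s$. This is essentially a one-line argument.

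For the \emph{left inclusion} $\{x:\Delta(x)\leq E(r)\} \subseteq \{x: F(x) - F(x_\star) \leq r\}$, the plan is to combine \cref{lip_F1} with the defining identity for $E(r)$. First, since $F(\cdot)$ is symmetric under the map $x_\star \mapsto -x_\star$, I would assume without loss of generality that $\|x-x_\star\|_2 \leq \|x+x_\star\|_2$, so $\Delta(x) = \|x-x_\star\|_2$. The triangle inequality then gives $\|x+x_\star\|_2 = \|(x-x_\star)+2x_\star\|_2 \leq \Delta(x) + 2\|x_\star\|_2$. Feeding this into the deviation bound from \cref{lip_F1} yields
\begin{equation*}
F(x)-F(x_\star) \leq \tfrac{L}{2}\Delta(x)\bigl(\Delta(x)+2\|x_\star\|_2\bigr) = \tfrac{L}{2}\Delta(x)^2 + L\|x_\star\|_2\,\Delta(x).
\end{equation*}
Next, I would square the defining formula $2LE(r) + L\|x_\star\|_2 = \sqrt{L^2\|x_\star\|_2^2 + 4rL}$ to verify the key identity $LE(r)^2 + L\|x_\star\|_2 E(r) = r$. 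Under the hypothesis $\Delta(x)\leq E(r)$, monotonicity of $\delta \mapsto \tfrac{L}{2}\delta^2 + L\|x_\star\|_2\delta$ and the bound $\tfrac{L}{2}\delta^2 \leq L\delta^2$ combine to give $\tfrac{L}{2}\Delta(x)^2 + L\|x_\star\|_2\Delta(x) \leq LE(r)^2 + L\|x_\star\|_2 E(r) = r$, closing the inclusion.

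The scalar inequality $E(r)\leq r/\lambda_s$ then comes for free: pick any $x_0$ with $\Delta(x_0) = E(r)$ (such $x_0$ exists by continuity of $\Delta$, e.g., $x_0 = x_\star + E(r) v$ for any unit $v$). Applying the left inclusion gives $F(x_0)-F(x_\star)\leq r$, and then the right inclusion gives $E(r) = \Delta(x_0)\leq r/\lambda_s$. The main subtlety — and the only place one could slip — is matching the quadratic obtained from \cref{lip_F1} to the implicit quadratic defining $E(r)$; the factor-of-two cushion noted above is exactly what makes the definition of $E(r)$ consistent with the $L/2$ constant in \cref{lip_F1}.
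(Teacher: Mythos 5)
Your proof is correct. Note that the paper itself gives no proof of this lemma --- it is imported verbatim as Lemma 7 of \cite{zheng2023new} --- so there is no in-paper argument to compare against; your write-up supplies a self-contained derivation from exactly the ingredients the paper makes available (Assumption~\ref{ass:sharpness} for the right inclusion, Lemma~\ref{lip_F1} plus the triangle inequality $\max\{\|x-x_\star\|_2,\|x+x_\star\|_2\}\leq \Delta(x)+2\|x_\star\|_2$ for the left one), and your identity $LE(r)^2+L\|x_\star\|_2E(r)=r$ is the right characterization of $E(r)$ as the positive root of that quadratic, so the chain $\tfrac{L}{2}\Delta(x)^2+L\|x_\star\|_2\Delta(x)\leq LE(r)^2+L\|x_\star\|_2E(r)=r$ closes the inclusion with room to spare. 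The only slip is in the final step: for $x_0=x_\star+E(r)v$ with an \emph{arbitrary} unit $v$ you only get $\Delta(x_0)\leq E(r)$ (e.g., $v=-x_\star/\|x_\star\|_2$ with $E(r)>2\|x_\star\|_2$ gives $\Delta(x_0)=E(r)-2\|x_\star\|_2<E(r)$), whereas you need equality to conclude $E(r)\leq r/\lambda_s$; choosing $v=x_\star/\|x_\star\|_2$ fixes this, since then $\|x_0+x_\star\|_2=E(r)+2\|x_\star\|_2>\|x_0-x_\star\|_2$. This is a one-word repair, not a gap in the argument.
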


\begin{lemma}\label{rel_lamL}
{Assumption~\ref{ass:sharpness} implies that} $\lambda_s\leq L\|x_\star\|_2/2$.
\end{lemma}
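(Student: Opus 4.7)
The plan is to prove this by evaluating both the sharpness inequality of \cref{ass:sharpness} and the absolute deviation bound of \cref{lip_F1} at a single convenient test point, namely $x=0$, and then comparing the two resulting estimates of $F(0)-F(x_\star)$.

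First, I would compute $\Delta(0)$ directly from the definition in~\eqref{eq:Delta}: since $\Delta(0)=\min\{\|x_\star\|_2,\|-x_\star\|_2\}=\|x_\star\|_2$. Applying~\eqref{ineq:sharpness} with $x=0$ then yields
\[
F(0)-F(x_\star)\ \geq\ \lambda_s\,\|x_\star\|_2.
\]
Next, applying~\eqref{eq:F-F*-bound} with $x=0$ gives
\[
\bigl|F(0)-F(x_\star)\bigr|\ \leq\ \frac{L}{2}\,\|0-x_\star\|_2\,\|0+x_\star\|_2\ =\ \frac{L}{2}\,\|x_\star\|_2^2.
\]

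Chaining these two inequalities (noting that the lower bound $\lambda_s\|x_\star\|_2\geq 0$ forces $F(0)-F(x_\star)$ to coincide with its absolute value) produces
\[
\lambda_s\,\|x_\star\|_2\ \leq\ F(0)-F(x_\star)\ \leq\ \frac{L}{2}\,\|x_\star\|_2^2.
\]
Assuming $x_\star\neq 0$, dividing both sides by $\|x_\star\|_2$ yields the claim $\lambda_s\leq L\|x_\star\|_2/2$; if $x_\star=0$ the bound is trivially satisfied (as sharpness then would force $\lambda_s\leq 0$ anyway).

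There is no real obstacle here: the argument is essentially a consistency check between the sharpness lower bound (linear in $\Delta$) and the quadratic-looking upper bound (which at $x=0$ becomes linear because one factor $\|x+x_\star\|_2$ equals $\|x_\star\|_2$). The only subtlety is choosing a test point at which the absolute-deviation bound tightens sufficiently to expose the factor $1/2$; the symmetric point $x=0$ does exactly this because $\|0-x_\star\|_2=\|0+x_\star\|_2=\|x_\star\|_2$, avoiding the slack that a limiting argument along $x=(1+\epsilon)x_\star$ would leave.
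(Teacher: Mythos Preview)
Your proposal is correct and follows essentially the same approach as the paper: both combine \cref{lip_F1} and \cref{ass:sharpness} at the test point $x=0$ to obtain $\lambda_s\|x_\star\|_2\le \tfrac{L}{2}\|x_\star\|_2^2$ and then divide through. The paper phrases the intermediate step for a generic $x$ with $\Delta(x)=\|x-x_\star\|_2$ before specializing to $x=0$, but the substance is identical.
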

\begin{proof}
Consider $x\in\mathbb{R}^n$ such that $\|x-x_\star\|_2=\Delta(x)$ and \tcb{$\Delta(x)\neq 0$}. Based on Lemma \ref{lip_F1} and Assumption \ref{ass:sharpness}, we have $\lambda_s\Delta(x)\leq F(x) - F(x_\star)\leq (L/2)\Delta(x)\|x+x_\star\|_2,$
which implies that
$\lambda_s\leq (L/2)\|x+x_\star\|_2.$
Letting $x = \mathbf{0}$, we have $\lambda_s\leq L\|x_\star\|_2/2.$
\end{proof}
\paragraph{Differentiability properties of $F(\cdot)$}
Recall that $F(\cdot) = h(c(\cdot))$, where $h:\reals^m\to\reals$ and $c:\reals^n\to\reals^m$ are defined by $h(\cdot) = \|\cdot\|_1/m$ and $c(\cdot) = |A\cdot|^2 - b$; hence, $\partial F(x) = [J_c(x)]^\top \partial h(c(x))$ for $x\in\mathbb{R}^n$ as defined in \eqref{iter_adasub}. 
For any fixed $\bar x\in\mathbb{R}^n$, 
let $c_{\bar x}(x) \triangleq c(\bar x) + [J_c(\bar x)](x - \bar x)$ for all $x\in\reals^n$; hence, $c_{\bar x}(\cdot)$ is an affine function. Consider {$F(\cdot;\bar x)$} defined in \eqref{def-Ft}. Since $F(\cdot;\bar x) = h(c_{\bar x}(\cdot))$, the function $F(x;\bar x)$ is convex in $x$, and $\partial F(x;\bar x)\mid_{x=\bar x}$ can be written as $[J_c(\bar x)]^\top \partial h(c_{\bar x}(\bar x)) = [J_c(\bar x)]^\top \partial h(c(\bar x))$. Thus, $\partial F(\bar x) = \partial F(x;\bar x)\vert_{x=\bar x}$ for any $\bar x\in\reals^n$. {Next, based on this observation, we provide a useful inequality for $F(\cdot)$. 

For any $\bar x\in\mathbb{R}^n$ and $v\in \partial F(\bar x)$, since we have $v\in \partial F(x;\bar x)\vert_{x=\bar x}$ and $F(\cdot;\bar x)$ is convex, we have $F(x;\bar x) - F(\bar x) = F(x;\bar x) - F(\bar x;\bar x)\geq \langle x-\bar x,v\rangle$. Together with  $F(x;\bar x)\leq F(x) + L\|x-\bar x\|_2^2/2$, which follows from \eqref{rel:gen_weak1}, we conclude that
\begin{equation}\label{eq_weakly_convex_useful}
{
	\begin{aligned}
		F(x) - F(\bar x) - \langle x-\bar x,v\rangle + L\|x-\bar x\|_2^2/2\geq 0,\quad \forall~x,\bar x\in\mathbb{R}^n,\ \forall~v\in\partial F(\bar x),
\end{aligned}}%
\end{equation}}%
\sa{which indeed implies that $F(\cdot)$ is weakly convex, i.e., $F(\cdot)+\frac{L}{2}\norm{\cdot}^2$ is convex.}
\section{Data Generation Process and Key Conditions}
\label{sec:data-gen}
In this section, we provide the statistical background and some key conditions for RPR. We first introduce the assumption for the data generation process of RPR.
\begin{assumption}
\label{ass_data}
Suppose that the following conditions hold:
\begin{enumerate}
\item[(a)] Given non-negative integers $m_1,m_2$, let \sa{$m=m_1+m_2$,} and $a_{1},a_{2},\ldots ,\sa{a_m}$ in $\mathbb{R}^n$ be random vectors following independent and identical distributions, represented by a generic \sa{random vector $a\in\reals^n$}, which follows a $\sigma^2$-subGaussian distribution with $\mathbf{0}$ as mean and $\Sigma$ as covariance matrix. The true signal vector $x_\star\in\mathbb{R}^n\backslash\{\mathbf{0}\}$ 
satisfies \eqref{intro_corrupted_phase} \sa{for some arbitrary $\cI_1,\cI_2\subset[m]$ such that $\mathcal{I}_1\bigcup \mathcal{I}_2 = \{1,2\ldots,m\}$, $\mathcal{I}_1\cap\mathcal{I}_2 = \emptyset$ with
	$|\cI_1|=m_1$,  $|\cI_2|=m_2$,} and $\xi_i$ for $i\in\mathcal{I}_2$ is a non-negative random variable following an arbitrary distribution.
\item[(b)] {Suppose} $\kappa_{\mathrm{st}}{\triangleq}\inf_{u, v \in \mathbb{S}^{n-1}} \mathbb{E}[|\langle a, u\rangle\langle a, v\rangle|]$ {and $p_{\mathrm{fail}}\triangleq \frac{m_2}{m_1+m_2}<\frac{1}{2}$ satisfy}
$\kappa_{\mathrm{st}} - 2p_{\mathrm{fail}}\|\Sigma\|_2>0$.
\item[(c)] {Given $\Tilde{p}\in(p_{\mathrm{fail}},~1-p_{\mathrm{fail}})$, suppose that} there exists 
{$\kappa>0$} such that 
{
	\begin{equation}
		\label{eq:p0}
		p_0\triangleq \inf_{u,v\in \mathbb{S}^{n-1}}\mathbb{P}(\min\{|\fprod{a,u}|,~|\fprod{a,v}|\}\geq \kappa) > {\frac{1-\tilde{p}}{1-p_{\mathrm{fail}}}}.
\end{equation}}%
\end{enumerate}
\end{assumption}
First, Assumption \ref{ass_data}(a) is \tcb{slightly weaker} than the combination of Model M2 and Assumption 4 in \cite{duchi2019solving}. {Let $m\triangleq m_1+m_2$ denote} the total number of measurements, i.e., $|\mathcal{I}_1| = m_1$, $|\mathcal{I}_2| = m_2$; moreover, $p_{\mathrm{fail}}=\frac{m_2}{m_1+m_2}$ represents the proportion of corrupted measurements\tcb{---}when $m_2=p_{\mathrm{fail}}=0$, this model reduces to 
noiseless phase retrieval problem. We observe $\{(a_i,b_i)\}_{i=1}^m$ without knowing which measurements are corrupted. We also guarantee that $\{a_i: i\in 
{\mathcal{I}_1}\}$ and $\{a_i: i\in 
{\mathcal{I}_2}\}$ are mutually independent and 
they follow a $\sigma^2$-subGaussian distribution. Next, we compare (a) with the assumptions in \cite{duchi2019solving}.
\sa{First, throughout \cite{duchi2019solving}, $\{a_i\}_{i\in [m]}$ are assumed to be independent and identically distributed copies of a random vector $a\in\reals^n$. Moreover, \cite[Assumption 4]{duchi2019solving} requires that $a\in\reals^n$ follows a subGaussian distribution. These assumptions are equivalent to our Assumption \ref{ass_data}(a).
Moreover, in addition to the assumptions mentioned above,} Model M2 in \cite{duchi2019solving} also requires the independence between $\{\xi_i:i\in\mathcal{I}_2\}$ and $\{a_i: i\in \mathcal{I}_1\}$.

Below, we 
state an \tcb{existing} statistical guarantee for data generation 
based on Assumption \ref{ass_data}(a).\looseness=-10
\tcb{\begin{lemma}\label{sharpness_upper}
(Lemma 3.1 in \cite{duchi2019solving}) Let Assumption \ref{ass_data}(a) hold. Then for all $t \geq 0$,
{
	$$
	\mathbb{P}\left(\left\|\frac{1}{m}\sum_{i=1}^m a_ia_i^\top-\Sigma\right\|_2 \geq 11 \sigma^2 \max \left\{\sqrt{\frac{4 n}{m}+t}, \frac{4 n}{m}+t\right\}\right) \leq \exp (-m t).
	$$}%
\end{lemma}}%
\begin{remark}
\label{rem:L}
\tcb{Lemma \ref{sharpness_upper}} shows that when $m/n$ is large enough, \tcb{$L = 2\|\frac{1}{m}\sum_{i=1}^m a_ia_i^\top\|_2$} is close to $2\|\Sigma\|_2$ with high probability.
\end{remark}

\tcb{We next discuss Assumption \ref{ass_data}(b). 
First, we state a result related to the sharpness condition.
\begin{lemma}\label{lemma_stat_sharpness}
(Proposition 4 in \cite{duchi2019solving}) Under (a) and (b) of Assumption \ref{ass_data}, 
there exist numerical constants $C, c_3>0$ (that do not depend on any other quantities) such that
{
	\begin{align*}
		F(x)-F(x_{\star}) \geq\left(\kappa_{\mathrm{st}}-2 p_{\text {fail }}\|\Sigma\|_2-{C} \sigma^2 \sqrt[3]{\frac{n}{m}}-{C} \sigma^2 t\right)\|x - x_\star\|_2\|x + x_\star\|_2,
\end{align*}}%
holds for all $x \in \mathbb{R}^n$ with probability at least $1-2 e^{-c_3 m}-2 e^{-m t^2}$ for any $t>0$. 
\end{lemma}}
\begin{remark}
\label{rem:lambda-s}
Lemma \ref{lemma_stat_sharpness} implies that the sharpness condition in~\eqref{ineq:sharpness} 
holds with high probability when $\kappa_{\mathrm{st}}>0, m/n$ is large and $p_{\mathrm{fail}}$ is sufficiently small. \tcb{In this situation, \sa{for small $t>0$}, the coefficient 
\sa{of} $\|x - x_\star\|_2\|x + x_\star\|_2$ is close to $\kappa_{\mathrm{st}} - 2p_{\mathrm{fail}}\|\Sigma\|_2$. Next, we show that $\|x - x_\star\|_2\|x + x_\star\|_2\geq \|x_\star\|_2\Delta(x)$. Without loss of generality, we assume that $\|x - x_\star\|_2 = \Delta(x)\leq \|x + x_\star\|$. In this case, $\|x_\star\|_2\leq \|x + x_\star\|_2/2 + \|x - x_\star\|_2/2\leq \|x + x_\star\|_2$. This shows the relationship. Thus, $\lambda_s$ is close to $\|x_\star\|_2(\kappa_{\mathrm{st}} - 2p_{\mathrm{fail}}\|\Sigma\|_2)$.} More importantly, \eqref{ineq:sharpness} implies that $\{x_\star,-x_\star\} = \argmin_{x\in\mathbb{R}^n} F(x)$.
\end{remark}

Using the local linear approximation in \sa{\Cref{thm:gen_weak}} and the sharpness condition, we 
can define a \textit{condition number} $\kappa_0 \triangleq L\|x_\star\|_2/(2\lambda_s)$ {for $F$ defined} in \eqref{duchi_l1_ori}; \sa{moreover,} {under \eqref{ineq:sharpness}, Lemma \ref{rel_lamL} shows that  $\kappa_0\geq 1$.} \sa{Finally,}
since $\kappa_{\mathrm{st}}\leq \inf_{u\in\mathbb{S}^{n-1}} \mathbb{E} (a^\top u)^2$, we 
argue that $L\|x_\star\|_2/(2\lambda_s)$ is related to the condition number of $\Sigma$ and $p_{\mathrm{fail}}$. {Indeed, from \Cref{rem:lambda-s,rem:L}, for $\kappa_{\mathrm{st}}>0$, $m/n$ large and $p_{\mathrm{fail}}$ sufficiently small,} 
\begin{equation}
\label{eq:kappa0}
{
\begin{aligned}
	\kappa_0 \triangleq L\|x_\star\|_2/(2\lambda_s)\approx \frac{1}{\kappa_{\mathrm{st}}/\|\Sigma\|_2 - 2p_{\mathrm{fail}}}.  
	\end{aligned}}%
\end{equation}
Therefore, $\kappa_0$ represents how ill-conditioned the RPR problem is.

{Assumption~\ref{ass_data}(c) 
is} new to the RPR literature and is the key to guarantee that $t_k$ and $\alpha_k$ are proportional to $\Theta(\Delta(x^k))$. Indeed, consider  $\Tilde{p}\in(p_{\mathrm{fail}},~1-p_{\mathrm{fail}})$, which can always be set to $\tilde p=\frac{1}{2}$ as $\frac{1}{2}$ clearly belongs to this interval, and recall that we use $\tilde p$-th percentile of the residuals at $x^k$ to define $\alpha_k$ and $t_k$ as in \eqref{choice_alpha_subg} and \eqref{choice_t_diminish}, respectively, e.g., for $\tilde p=\frac{1}{2}$, both $\alpha_k$ and $t_k$ are set based on the median value of $\{r_i(x^k)\}_{i=1}^m$. Statistically, it means that for any $\|u\|_2=1$, the distribution of $\fprod{a,u}$ should not concentrate too close around {$\mathbf{0}\in\reals^n$}.

\tcb{
\textbf{A sufficient condition for Assumption~\ref{ass_data}(c)}. Since $\mathbb{P}(\min\{|\fprod{a,u}|,~|\fprod{a,v}|\}\geq \kappa) \geq  \mathbb{P}(|\fprod{a,u}|\geq \kappa) + \mathbb{P}(|\fprod{a,v}|\geq \kappa) - 1$, 
a sufficient condition for \eqref{eq:p0} is 
\begin{equation}\label{response_sufficient}
{
	\begin{aligned}
		\inf_{u\in \mathbb{S}^{n-1}}\mathbb{P}(|\fprod{a,u}|\geq \kappa) > 1/2 + \frac{1-\tilde{p}}{2(1-p_{\mathrm{fail}})}.
\end{aligned}}%
\end{equation}
\sa{Note that $1/2 + \frac{1-\tilde{p}}{2(1-p_{\mathrm{fail}})}<1$ since $\tilde p > p_{\rm fail}$, e.g., $\tilde p=1/2$.} \lz{\sa{Compared to \eqref{eq:p0}, the condition in \eqref{response_sufficient}} is easier to verify as it only involves the marginal distribution.}

\textbf{A class of distributions satisfying Assumption~\ref{ass_data}(c).} A broad class of continuous elliptical distributions satisfies \eqref{eq:p0}. Consider the density function of \sa{the random vector $a\in\reals^n$}:
$$f_a(v) = c_a g(v^\top \Sigma^{-1} v),\qquad v\in\mathbb{R}^n.$$
where $\Sigma$ is positive definite, $c_a>0$ is the normalizing constant, and $g(\cdot)$ is a nonnegative Lebesgue-integrable function on $\mathbb{R}_+\bigcup \{0\}$. We further require that there exists $\epsilon>0$ such that $g(t)>0$ for $t\in (0,\epsilon)$. This class includes the standard Gaussian distribution, i.e., $a\sim\mathcal{N}(\mathbf{0},\Sigma)$ with a symmetric positive definite $\Sigma$. Next, we prove that there exists $\kappa>0$ such that \eqref{response_sufficient} holds for any distribution \sa{with the properties stated} 
above. For any $u\in\mathbb{S}^{n-1}$, letting $s_u = \sqrt{u^\top \Sigma u}$, the density function of $a_u = a^\top u$ is given by
{
$$f_{a_u}(z) = \frac{1}{s_u}\hat{g}((z/s_u)^2),\quad \mbox{where}\quad \hat{g}(t) = \frac{2\pi^{(n-1)/2}}{\Gamma\left((n-1)/2\right)}\int_0^\infty r^{n-2} g(t+r^2)dr,\quad \forall\ t\geq 0.$$}%
It is easy to 
\sa{verify} that there exists $\epsilon'>0$ such that \sa{$\hat{g}(t)>0$ holds for $0<t\leq \epsilon'$.} Furthermore, 
since $\mathbb{P}(|\fprod{a,u}|\geq \kappa) = \int_{|t|\geq \kappa/s_u} \hat{g}(t^2)dt$,
we have
{
$$\inf_{u\in \mathbb{S}^{n-1}}\mathbb{P}(|\fprod{a,u}|\geq \kappa) = \int_{|t|\geq \kappa/\sqrt{\lambda_{\min}(\Sigma)}} \hat{g}(t^2)dt,$$}%
which further implies that
$\lim_{\kappa\rightarrow 0+} \inf_{u\in \mathbb{S}^{n-1}}\mathbb{P}(|\fprod{a,u}|\geq \kappa) = 1$. Recalling that $1/2 + \frac{1-\tilde{p}}{2(1-p_{\mathrm{fail}})}<1$, we can conclude that there exists $\kappa>0$ such that \eqref{response_sufficient} holds, which further implies \eqref{eq:p0} holds.}

Next, we provide an example that (a), (b), and (c) of Assumption \ref{ass_data} hold simultaneously. Let {$\tilde p=\frac{1}{2}$} and $a\sim N(\mathbf{0}, I_n)$, which indicates that $\kappa_{\mathrm{st}} = 2/\pi$ (see \cite[Example 4]{duchi2019solving}) and $\Sigma = \mathcal{I}_n.$ Thus, when $0 < p_{\mathrm{fail}} \leq 1/(2\pi)$, we have that $\kappa_{\mathrm{st}} - 2p_{\mathrm{fail}}\|\Sigma\|_2\geq 1/\pi > 0$. 
{Moreover, for $p_0(w)\triangleq\inf_{u,v\in \mathbb{S}^{n-1}}\mathbb{P}(\min\{|\fprod{a,u}|,~|\fprod{a,v}|\}\geq w)$, it holds that $\lim_{w\rightarrow 0} p_0(w) = 1$; and we also have $\frac{1-\tilde{p}}{1-p_{\mathrm{fail}}} \leq 1/(2-1/\pi) < 1$. Therefore, we can conclude that $\kappa>0$ satisfying Assumption~\ref{ass_data}(c) exists.}


Next, we 
{argue} that 
$r^{\Tilde{p}}(x)= \Theta(F(x) - F(x_\star))$ with high probability.
\begin{theorem}\label{thm_assumptions_high_prob}
Suppose that Assumption \ref{ass_data} holds and $\Tilde{p}\in (p_{\mathrm{fail}},1-p_{\mathrm{fail}}),m\Tilde{p}\in\mathbb{N}$. There exists positive constants {$u_L,u_H,\rho_1,\rho_2,\rho_3$}, {depending on $\Sigma, \sigma,\kappa_{\mathrm{st}}, \kappa, p_0, p_{\mathrm{fail}}, \tilde{p}$}, such that when $m\geq \rho_1n$, the following statistical event, 
\tcb{
\begin{equation}\label{condition_med_prop}
	{
		\begin{aligned}
			r^{\Tilde{p}}(x) \in \Big[u_L(F(x) - F(x_\star)),\  u_H(F(x) - F(x_\star))\Big],\quad \forall x\in\mathbb{R}^n, 
	\end{aligned}}%
	\end{equation}}%
	holds with probability at least $1-\rho_2\exp(-m\rho_3).$
\end{theorem}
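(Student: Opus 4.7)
The plan is to establish a three-way sandwich showing that $r^{\tilde p}(x)$, $F(x)-F(x_\star)$, and $\|x-x_\star\|_2\|x+x_\star\|_2$ are all comparable---up to constants depending only on the parameters listed in the statement---with high probability, uniformly in $x\in\mathbb{R}^n$. The first pair of comparisons is largely independent of $\tilde p$: the upper direction $F(x)-F(x_\star)\leq (L/2)\|x-x_\star\|_2\|x+x_\star\|_2$ is the deterministic Lemma~\ref{lip_F1}, while the matching \emph{reverse sharpness} $F(x)-F(x_\star)\geq c_0(\kappa_{\mathrm{st}}-2p_{\mathrm{fail}}\|\Sigma\|_2)\|x-x_\star\|_2\|x+x_\star\|_2$ can be extracted by sharpening the argument in \cite[Proposition 4]{duchi2019solving}: starting from $F(x)-F(x_\star)\geq \tfrac{1}{m}\sum_{i\in\mathcal{I}_1}r_i^{\mathrm{clean}}(x)-\tfrac{1}{m}\sum_{i\in\mathcal{I}_2}r_i^{\mathrm{clean}}(x)$ with $r_i^{\mathrm{clean}}(x)\triangleq|\fprod{a_i,x-x_\star}\fprod{a_i,x+x_\star}|$, I would apply uniform-in-$x$ Bernstein-type concentration under the subGaussian assumption (a) together with the definition of $\kappa_{\mathrm{st}}$ in (b) to the two empirical means. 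Note this reverse sharpness implies the standard one, since $\|x-x_\star\|_2\|x+x_\star\|_2\geq\|x_\star\|_2\Delta(x)$ by the parallelogram identity.

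Given this sandwich, the lower bound $r^{\tilde p}(x)\geq u_L(F(x)-F(x_\star))$ falls out from Assumption~\ref{ass_data}(c). By a uniform concentration over pairs $(u,v)\in\mathbb{S}^{n-1}\times\mathbb{S}^{n-1}$---standard Bernstein plus an $\varepsilon$-net on the sphere---with high probability every such pair satisfies $\#\{i\in\mathcal{I}_1:\min\{|\fprod{a_i,u}|,|\fprod{a_i,v}|\}\geq\kappa\}\geq (p_0-\varepsilon)m_1$. Plugging $u=(x-x_\star)/\|x-x_\star\|_2$ and $v=(x+x_\star)/\|x+x_\star\|_2$ yields at least $(p_0-\varepsilon)(1-p_{\mathrm{fail}})m$ indices in $[m]$ with $r_i(x)=r_i^{\mathrm{clean}}(x)\geq\kappa^2\|x-x_\star\|_2\|x+x_\star\|_2$. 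Since (c) gives $p_0(1-p_{\mathrm{fail}})>1-\tilde p$ \emph{strictly}, $\varepsilon$ can be picked small enough so the count exceeds $(1-\tilde p)m$, hence $r^{\tilde p}(x)\geq\kappa^2\|x-x_\star\|_2\|x+x_\star\|_2$, and Lemma~\ref{lip_F1} then gives $r^{\tilde p}(x)\geq(2\kappa^2/L)(F(x)-F(x_\star))$.

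For the upper bound $r^{\tilde p}(x)\leq u_H(F(x)-F(x_\star))$, I would use a second uniform-in-$x$ Bernstein bound to get $\tfrac{1}{m_1}\sum_{i\in\mathcal{I}_1}r_i^{\mathrm{clean}}(x)\leq C_1\|x-x_\star\|_2\|x+x_\star\|_2$ with $C_1=O(\|\Sigma\|_2)$. Markov's inequality inside $\mathcal{I}_1$ then yields $\#\{i\in\mathcal{I}_1:r_i(x)>\beta C_1\|x-x_\star\|_2\|x+x_\star\|_2\}\leq m_1/\beta$ for any $\beta\geq 1$. Treating all of $\mathcal{I}_2$ as potential violators, the total exceedance count is at most $m_2+m_1/\beta$; choosing $\beta=(1-p_{\mathrm{fail}})/(1-\tilde p-p_{\mathrm{fail}})$ (finite because $\tilde p<1-p_{\mathrm{fail}}$) forces this to at most $(1-\tilde p)m$, so $r^{\tilde p}(x)\leq\beta C_1\|x-x_\star\|_2\|x+x_\star\|_2$. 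Invoking the reverse sharpness from the first step converts this into the desired $r^{\tilde p}(x)\leq u_H(F(x)-F(x_\star))$.

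The main obstacle, and the source of the condition $m\geq\rho_1 n$, is making each of the three invoked probabilistic statements---upper concentration of $\sum_{\mathcal{I}_1}r_i^{\mathrm{clean}}$, reverse-sharpness concentration for the signed sum $\sum_{\mathcal{I}_1}-\sum_{\mathcal{I}_2}$, and the uniform lower count used for Assumption~(c)---truly \emph{uniform} in $x$ (equivalently, in directions $(u,v)\in\mathbb{S}^{n-1}\times\mathbb{S}^{n-1}$ by homogeneity). Each requires combining a Bernstein inequality for the subGaussian products $\fprod{a_i,u}\fprod{a_i,v}$ with a covering of $\mathbb{S}^{n-1}\times\mathbb{S}^{n-1}$ at a scale proportional to the desired slack, plus a Lipschitz argument to extend the pointwise bound to the full cover; the covering cardinality $\exp(O(n))$ is absorbed into the failure probability $\exp(-m\rho_3)$ precisely when $m\gtrsim n$. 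A single union bound over the three events then produces the stated probability $1-\rho_2\exp(-m\rho_3)$.
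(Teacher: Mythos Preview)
Your proposal is correct and follows essentially the same route as the paper: both sandwich $r^{\tilde p}(x)$ and $F(x)-F(x_\star)$ against $\tilde\Delta(x)\triangleq\|x-x_\star\|_2\|x+x_\star\|_2$, use Assumption~\ref{ass_data}(c) together with uniform concentration of the indicator counts to get $r^{\tilde p}(x)\geq\kappa^2\tilde\Delta(x)$, use a Markov/order-statistic argument on the clean residuals to get $r^{\tilde p}(x)\leq \beta\cdot\tfrac{1}{m_1}\sum_{i\in\mathcal{I}_1}r_i(x)$ with your $\beta=(1-p_{\mathrm{fail}})/(1-\tilde p-p_{\mathrm{fail}})$ (exactly the paper's constant in Lemma~\ref{bound_quantile}), and invoke \cite[Proposition~4]{duchi2019solving} for the ``reverse sharpness'' $F(x)-F(x_\star)\gtrsim\tilde\Delta(x)$.

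The only noteworthy difference is in how the uniform bound $\tfrac{1}{m_1}\sum_{i\in\mathcal{I}_1}r_i(x)\leq C_1\tilde\Delta(x)$ is obtained: you propose a direct Bernstein bound on the subexponential products $\langle a_i,u\rangle\langle a_i,v\rangle$ plus a net on $\mathbb{S}^{n-1}\times\mathbb{S}^{n-1}$, whereas the paper observes $\tfrac{1}{m_1}\sum_i|\langle a_i,u\rangle\langle a_i,v\rangle|\leq\|\tfrac{1}{m_1}\sum_i a_ia_i^\top\|_2$ and simply appeals to the operator-norm concentration of the empirical covariance (Lemma~\ref{sharpness_upper}). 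The paper's route is a bit cleaner since it reuses a single matrix-concentration event for both this step and for controlling $L$ in Lemma~\ref{lip_F1}, but your direct approach would also work.
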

The detailed proof of Theorem \ref{thm_assumptions_high_prob} is provided in Section \ref{sec:proof_stat}. 

Finally, using \eqref{ineq:sharpness} and \eqref{condition_med_prop} 
implies the following conclusions for \adasub{} and \adaipl{}.
\begin{corollary}
\label{prop_alpha_and_tk}
\tcb{Under the statistical event \eqref{condition_med_prop}, which holds with high probability under Assumption \ref{ass_data}, the following conclusions hold.}

(a) When $0< G < 2/u_H$, {the \emph{\adasub{}} step size 
$\alpha_k$ in 
\eqref{choice_alpha_subg}} satisfies that 
{
\begin{equation*}
	\alpha_k\in \Big[c_1(F(x^k) - F(x_\star)),\ c_2(F(x^k) - F(x_\star))\Big],\quad \forall k\in\mathbb{N},
	\end{equation*}}%
	{where $c_1 \triangleq u_LG$, $c_2 \triangleq u_HG$ and they satisfy $0<c_1\leq c_2<2$.}
	
	(b) \tcb{Suppose that Assumption~\ref{ass:sharpness} also holds.} For any $k\in\mathbb{N}$, if $\Delta(x^k)\leq \|x_\star\|_2,$ then there exists $g_k\in [g_L,g_H]$ such that {the \emph{\adaipl{}} step size $t_k$ in \eqref{choice_t_diminish} satisfies $t_k = \min\{L^{-1},g_k\Delta(x^k)\}$,}
	{where $g_L \triangleq G\lambda_s u_L$ and $g_H \triangleq 3G L\|x_\star\|_2 u_H/2$.}
\end{corollary}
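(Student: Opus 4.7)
The plan is to combine the high-probability event provided by Theorem~\ref{thm_assumptions_high_prob} with the sharpness and absolute-deviation bounds from Section~\ref{sec:property_F}, then simply unwind the definitions of $\alpha_k$ and $t_k$. Part~(a) is essentially immediate; part~(b) requires sandwiching $F(x^k)-F(x_\star)$ between constant multiples of $\Delta(x^k)$ before inserting into the event \eqref{condition_med_prop}.

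For part~(a), I would recall that $\alpha_k = G\, r^{\tilde p}(x^k)$ by \eqref{choice_alpha_subg}. On the event in \eqref{condition_med_prop}, we have
\[
u_L\,(F(x^k)-F(x_\star))\;\leq\; r^{\tilde p}(x^k)\;\leq\; u_H\,(F(x^k)-F(x_\star)),
\]
so multiplying by $G>0$ yields the claimed inclusion with $c_1=u_L G$ and $c_2=u_H G$. Non-emptiness of the interval in \eqref{condition_med_prop} at every $x$ forces $u_L\leq u_H$, hence $c_1\leq c_2$, and the hypothesis $G<2/u_H$ gives $c_2<2$ directly.

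For part~(b), the strategy is to bound $F(x^k)-F(x_\star)$ from both sides by linear functions of $\Delta(x^k)$. Assumption~\ref{ass:sharpness} provides the lower bound $F(x^k)-F(x_\star)\geq \lambda_s \Delta(x^k)$. For the upper bound, without loss of generality suppose $\Delta(x^k)=\|x^k-x_\star\|_2$; then the triangle inequality and the hypothesis $\Delta(x^k)\leq\|x_\star\|_2$ give
\[
\|x^k+x_\star\|_2 \;\leq\; \|x^k-x_\star\|_2+2\|x_\star\|_2 \;\leq\; 3\|x_\star\|_2,
\]
and Lemma~\ref{lip_F1} then implies $F(x^k)-F(x_\star)\leq \tfrac{3L\|x_\star\|_2}{2}\,\Delta(x^k)$. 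Substituting both bounds into \eqref{condition_med_prop} and multiplying by $G$ shows $G\,r^{\tilde p}(x^k)\in[g_L\Delta(x^k),g_H\Delta(x^k)]$; hence there exists $g_k\in[g_L,g_H]$ with $G\,r^{\tilde p}(x^k)=g_k\Delta(x^k)$. Taking the minimum with $L^{-1}$ then recovers the form of $t_k$ claimed in \eqref{choice_t_diminish}.

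The argument is essentially bookkeeping once Theorem~\ref{thm_assumptions_high_prob} and the inequalities of Section~\ref{sec:property_F} are in hand; the only slightly nontrivial point is controlling $\|x^k+x_\star\|_2$ in part~(b), where the local hypothesis $\Delta(x^k)\leq \|x_\star\|_2$ is used in an essential way to convert the product bound of Lemma~\ref{lip_F1} into a bound that is linear in $\Delta(x^k)$. If one attempted to avoid this hypothesis, the factor $\|x^k+x_\star\|_2$ would grow with $\Delta(x^k)$ and the upper constant $g_H$ would no longer be uniform, which is why the localization assumption appears in the statement.
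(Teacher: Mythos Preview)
Your proof is correct and follows essentially the same approach as the paper: part~(a) is immediate from \eqref{condition_med_prop}, and part~(b) combines \eqref{condition_med_prop} with the sharpness bound~\eqref{ineq:sharpness} for the lower side and Lemma~\ref{lip_F1} plus the triangle inequality (using $\Delta(x^k)\leq\|x_\star\|_2$) for the upper side, exactly as the paper does. The only cosmetic difference is that the paper writes the product bound as $\tfrac{L}{2}\Delta(x)(\Delta(x)+2\|x_\star\|_2)$ without first fixing WLOG which signal is closer, but the content is identical.
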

\begin{proof}
(a) is a conclusion of \tcb{the statistical event in} \eqref{condition_med_prop}. For (b), \tcb{since $t_k = \min\{L^{-1},G~ r^{\Tilde{p}}(x^k)\}$}, we only need to prove that for any $x\in\mathbb{R}^n$ with $\Delta(x)\leq \|x_\star\|$, we have
\begin{equation}\label{lower_upper_r_tilde0}
(g_L/G) \Delta(x) \leq r^{\Tilde{p}}(x)\leq (g_H/G) \Delta(x).
\end{equation}
\tcb{For the first inequality above, by the first inequality of the statistical event \eqref{condition_med_prop}, $(r^{\Tilde{p}}(x) \geq u_L(F(x) - F(x_\star))$. Noticing that $F(x) - F(x_\star)\geq \lambda_s\Delta(x)$ by \eqref{ineq:sharpness}, we get the first inequality.} For the second one above, \tcb{the statistical event} in \eqref{condition_med_prop} implies that $r^{\Tilde{p}}(x)\leq u_H(F(x) - F(x_\star))$.
By Lemma \ref{lip_F1}, $|F(x) - F(x_\star)|\leq \frac{L}{2}\|x-x_\star\|_2\|x+x_\star\|_2$. In addition, we have that
$\frac{L}{2}\|x-x_\star\|_2\|x+x_\star\|_2\leq \frac{L}{2}\Delta(x) (\Delta(x) + 2\|x\|_\star)\leq 3L\|x_\star\|_2\Delta(x)/2 = g_H/(G u_H).$ \tcb{Here, the first inequality holds because when one of $\|x-x_\star\|_2$ and $\|x+x_\star\|_2$ equals $\Delta(x)$, the other one can be bounded by $\Delta(x) + 2\|x\|_\star$ by the triangle inequality. The second inequality holds because $\Delta(x)\leq \|x_\star\|_2$.}
Thus, the second inequality \tcb{of \eqref{lower_upper_r_tilde0}} is also proved.
\end{proof}
Note that larger choices for $G$ lead to larger coefficients $c_1,c_2,g_L,g_H$. 
\section{Proof of 
Theorem \ref{thm_assumptions_high_prob}}\label{sec:proof_stat}
Fix $x\in\reals^n$. First, we 
recall an upper 
on $F(x) - F(x_\star)$. According to Lemma~\ref{lip_F1}, one has $F(x) - F(x_\star)\leq \frac{L}{2}\|x-x_\star\|_2\|x+x_\star\|_2$.
Moreover, Lemma~\ref{sharpness_upper} shows that $L$ is close to $2\|\Sigma\|_2$ when $m/n$ is large, where $\Sigma$ is given in Assumption~\ref{ass_data} and $\|\Sigma\|_2 = \sup_{u\in \mathbb{S}^{n-1}}\mathbb{E} \fprod{a,u}^2$. Throughout the proof, we adopt the notation \tcb{$$\tilde{\Delta}(x) {\triangleq} \|x-x_\star\|_2\|x+x_\star\|_2$$} for brevity.
Next, we begin the proof. It contains two main components.

\subsection{$r^{\tilde{p}}(x) = 
{\Theta}(\tilde{\Delta}(x))$ with high probability}
We review the definitions related to {\eqref{choice_alpha_subg} and \eqref{choice_t_diminish} regarding our choice of $\alpha_k$ and $t_k$. Let} $r_i(x) = |{\fprod{a_i,x}^2} - b_i|$ for all $i = 1,2\ldots,m$. 
\begin{definition}
Given $x\in\reals^n$, for any $i\in [m]$, let $r_{(i)}(x)$ denote the $i$-th order statistic based on 
$\{r_i(x)\}_{i=1}^m$, 
which indicates that {$r_{(1)}(x)\leq\ldots \leq r_{(m)}(x)$}. Similarly, for any $i\in \mathcal{I}_1$, let $\Tilde{r}_{(i)}(x)$ denote the $i-$th order statistic based on
\tcb{$\{r_i(x)\}_{i\in\mathcal{I}_1}$.} 
Moreover, for any given percentile $\Tilde{p}\in (0,1)$ such that $m\Tilde{p}\in\mathbb{N}$, we define $r^{\tilde{p}}(x) {\triangleq} r_{(m\Tilde{p})}(x)$.
\end{definition}%
\begin{remark}
{Recall that $p_{\mathrm{fail}}=(m-m_1)/m$ is the fraction of corrupted measurements, and by definition $m p_{\mathrm{fail}}=m-m_1\in\integers_+$. We assume that $p_{\mathrm{fail}}<\frac{1}{2}$ and $\tilde p\in (p_{\mathrm{fail}}, 1-p_{\mathrm{fail}})$. For the sake of simplicity of notation, we assume that  $m\tilde p\in\integers_+$; hence, $m(\tilde p-p_{\mathrm{fail}})\in\integers$, and note that $m \tilde p< m(1-p_{\mathrm{fail}})=m_1$. Thus, the order statistics $\tilde r_{(m(\tilde p-p_{\mathrm{fail}}))}$, $\tilde r_{(m\tilde p)}$ and $r_{(m\tilde p)}$ are all well defined.}
\end{remark}
Next, 
we compare $r_{(m\tilde{p})}(x)$ to the mean and quantiles of the noiseless samples.
\begin{lemma}\label{bound_quantile}
\tcb{If $p_{\mathrm{fail}}<\Tilde{p}<1-p_{\mathrm{fail}}$, then for any $x\in \mathbb{R}^n$, $\Tilde{r}_{\left(m(\Tilde{p} - p_{\mathrm{fail}})\right)}(x)\leq r_{(m\Tilde{p})}(x)\leq \Tilde{r}_{\left(m\Tilde{p}\right)}(x)$;} moreover,
{
$$\Tilde{r}_{\left(m\Tilde{p}\right)}(x)\leq \frac{1}{m(1 - p_{\mathrm{fail}} - \Tilde{p})}\sum_{i=m\Tilde{p}+1}^{m_1} \Tilde{r}_{(i)}(x)\leq \frac{1-p_{\mathrm{fail}}}{(1 - p_{\mathrm{fail}} - \Tilde{p})}\frac{1}{m_1}\sum_{i\tcb{\in\mathcal{I}_1}} r_i(x).$$}%
\end{lemma}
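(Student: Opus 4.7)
The plan is to prove both displayed inequalities by pure order-statistic counting; no statistical content of Assumption \ref{ass_data} is actually needed for this lemma beyond the bookkeeping $m_1=m(1-p_{\mathrm{fail}})$, $m_2=mp_{\mathrm{fail}}$, and $m\tilde p,\ m(\tilde p-p_{\mathrm{fail}})\in\integers_+$, which is already ensured by the hypotheses.

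For the first double inequality, I would argue both sides by counting how many of the $m$ residuals $\{r_i(x)\}_{i=1}^m$ lie at or below a given threshold. For the upper bound $r_{(m\tilde p)}(x)\le \tilde r_{(m\tilde p)}(x)$, note that by definition of $\tilde r_{(m\tilde p)}(x)$, at least $m\tilde p$ of the \emph{uncorrupted} residuals $\{r_i(x)\}_{i=1}^{m_1}$ are $\le \tilde r_{(m\tilde p)}(x)$; these are also $m\tilde p$ elements of the full sample lying below that threshold, which forces the $m\tilde p$-th order statistic of the full sample to be at most $\tilde r_{(m\tilde p)}(x)$. For the lower bound $\tilde r_{(m(\tilde p-p_{\mathrm{fail}}))}(x)\le r_{(m\tilde p)}(x)$, observe that among the $m\tilde p$ indices $i\in[m]$ with $r_i(x)\le r_{(m\tilde p)}(x)$, at most $m_2=mp_{\mathrm{fail}}$ lie in the corrupted set $\mathcal{I}_2$; hence at least $m\tilde p-mp_{\mathrm{fail}}=m(\tilde p-p_{\mathrm{fail}})$ of them lie in $\mathcal{I}_1$, which forces $\tilde r_{(m(\tilde p-p_{\mathrm{fail}}))}(x)\le r_{(m\tilde p)}(x)$.

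For the second chain, the key identity is $m_1-m\tilde p=m(1-p_{\mathrm{fail}}-\tilde p)$, which makes the prefactor $1/[m(1-p_{\mathrm{fail}}-\tilde p)]$ exactly $1/(m_1-m\tilde p)$. The first inequality then just says that the $m\tilde p$-th order statistic of a list is at most the average of the strictly larger order statistics: for each $i\ge m\tilde p+1$ we have $\tilde r_{(i)}(x)\ge \tilde r_{(m\tilde p)}(x)$, so averaging the $m_1-m\tilde p$ such terms preserves the lower bound. The second inequality extends the summation range from $\{m\tilde p+1,\dots,m_1\}$ to $\{1,\dots,m_1\}$ using nonnegativity of the $\tilde r_{(i)}(x)$; since $\sum_{i=1}^{m_1}\tilde r_{(i)}(x)=\sum_{i=1}^{m_1}r_i(x)$ (a rearrangement), multiplying through by $m_1/m_1$ converts the prefactor into $\frac{m_1}{m(1-p_{\mathrm{fail}}-\tilde p)}\cdot\frac{1}{m_1}=\frac{1-p_{\mathrm{fail}}}{1-p_{\mathrm{fail}}-\tilde p}\cdot\frac{1}{m_1}$, which is exactly the stated bound.

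I do not expect any real obstacle here; the lemma is entirely combinatorial, and the only thing to be careful about is the arithmetic that links $m_1$, $m p_{\mathrm{fail}}$, and the indices $m\tilde p$, $m(\tilde p-p_{\mathrm{fail}})$, $m(1-p_{\mathrm{fail}}-\tilde p)$, all of which are integers under the stated assumption $\tilde p\in(p_{\mathrm{fail}},1-p_{\mathrm{fail}})$ with $m\tilde p\in\mathbb{N}$. This lemma will then be the starting point in Section~\ref{sec:proof_stat} for sandwiching $r^{\tilde p}(x)$ between $\tilde\Delta(x)$-scaled quantities, leading via Lemma~\ref{sharpness_upper} and the sharpness condition to \cref{thm_assumptions_high_prob}.
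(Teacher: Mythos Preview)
Your proposal is correct and follows essentially the same order-statistic counting argument as the paper: the paper formalizes your counting via the sets $A_1,\dots,A_4$ (e.g., $A_2=A_1\setminus([m]\setminus[m_1])$ gives $|A_2|\ge m(\tilde p-p_{\mathrm{fail}})$), but the logic is identical. One minor phrasing point: when you write ``among the $m\tilde p$ indices $i\in[m]$ with $r_i(x)\le r_{(m\tilde p)}(x)$,'' there may be strictly more than $m\tilde p$ such indices due to ties --- the paper is explicit about this --- but your conclusion only strengthens in that case, so the argument stands.
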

\begin{proof}
{We denote $A_1 \triangleq \{i\in [m]:r_i(x) \leq r_{(m\tilde{p})}(x)\},A_2 \triangleq \{i\in 
\tcb{\cI_1}:r_i(x) \leq r_{(m\tilde{p})}(x)\}$, {$A_3 \triangleq \{i\in [m]:r_i(x) \leq \tilde{r}_{(m\tilde{p})}(x)\}$ and $A_4 \triangleq \{i\in \tcb{\mathcal{I}_1}:r_i(x) \leq \tilde{r}_{(m\tilde{p})}(x)\}$.}
From the definition of order statistics, we know that $|A_1|\geq m\tilde{p}$\tcb{---}  
it might happen that $r_i(x) = r_j(x)$ for some $i,j\in [m]$ such that $i\neq j$; hence, it is possible that $|A_1|> m\tilde{p}$. Furthermore, note that 
$A_2=A_1\backslash \tcb{\mathcal{I}_2}$. 
Thus, $|A_2| = |A_1\backslash \tcb{\mathcal{I}_2}|\geq |A_1| - \tcb{m_2}\geq m(\tilde{p} - p_{\mathrm{fail}})$, 
which indicates the first inequality. We can find that {$A_4\subseteq A_3$}, which indicates that {$|A_3|\geq |A_4|\geq m\tilde{p}$}. Thus, the second inequality holds.}
The third inequality holds because $\Tilde{r}_{(i)}(x)\geq \tilde{r}_{(m\Tilde{p})}(x)$ when $i\geq m\Tilde{p}$, and we also have $m(1-p_{\mathrm{fail}} - \tilde{p}) = m_1 - m\tilde{p}$. The fourth one holds because 
$\sum_{i\tcb{\in\mathcal{I}_1}} r_i(x)\geq \sum_{i=m\Tilde{p}+1}^{m_1} \Tilde{r}_{(i)}(x)$
and $\frac{1-p_{\mathrm{fail}}}{(1 - p_{\mathrm{fail}} - \Tilde{p})}\frac{1}{m_1} = \frac{1}{m(1 - p_{\mathrm{fail}} - \Tilde{p})}$.
\end{proof}

Lemma \ref{bound_quantile} implies that
\begin{equation}\label{eq_proof_stat_lu_summary}
{
\begin{aligned}
\Tilde{r}_{\left(m(\Tilde{p} - p_{\mathrm{fail}})\right)}(x)\leq r_{(m\Tilde{p})}(x)\leq \frac{1-p_{\mathrm{fail}}}{(1 - p_{\mathrm{fail}} - \Tilde{p})}\frac{1}{m_1}\sum_{i\tcb{\in\mathcal{I}_1}}r_i(x),
\end{aligned}}%
\end{equation}
where both bounds are quantities that are only related to the noiseless measurements. 
Next, we argue that 
$\frac{1}{m_1}\sum_{i\in\tcb{\mathcal{I}_1}} r_i(x)\leq 
{\cO}(\tilde{\Delta}(x))$
for all $x\in\reals^n$ with high probability.
\begin{lemma}\label{lemma_upper_bound_mean_r_noiseless}
Suppose that Assumption \ref{ass_data} holds 
for $\Tilde{p}\in (0,1)$ such that $m\Tilde{p}\in\mathbb{N}$ and $p_{\mathrm{fail}}<\Tilde{p}<1-p_{\mathrm{fail}}$. Then whenever $m\geq {8}n/(u_1(1-p_{\mathrm{fail}}))$, we have 
\begin{equation}\label{proof_stat_event1}
{
\begin{aligned}
	\mathbb{P}\left(\frac{1}{m_1}\sum_{i\tcb{\in\mathcal{I}_1}} r_i(x)\leq \frac{3}{2}\|\Sigma\|_2\tilde{\Delta}(x),\quad\forall~ x\in\mathbb{R}^n\right)\geq 1-\exp\Big(-\frac{mu_1}{2}(1-p_{\mathrm{fail}})\Big)
	\end{aligned}}%
\end{equation}
where {$u_1 = \min\{1,~\|\Sigma\|_2/(22\sigma^2)\}\|\Sigma\|_2/(22\sigma^2)$}.
\end{lemma}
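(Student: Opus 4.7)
The plan is to reduce the claim to a uniform operator-norm concentration inequality for $\frac{1}{m_1}\sum_{i=1}^{m_1} a_i a_i^\top$ and then invoke \cref{sharpness_upper}.

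First, I would use that, for all $i\in[m_1]$, we have $b_i=\langle a_i,x_\star\rangle^2$, so $r_i(x)=|\langle a_i,x\rangle^2-\langle a_i,x_\star\rangle^2|=|\langle a_i,x-x_\star\rangle\,\langle a_i,x+x_\star\rangle|$. Writing $u=(x-x_\star)/\|x-x_\star\|_2$ and $v=(x+x_\star)/\|x+x_\star\|_2$ (and handling the trivial case $x\in\{\pm x_\star\}$ separately), this factors as
\[
\frac{1}{m_1}\sum_{i=1}^{m_1}r_i(x)=\|x-x_\star\|_2\|x+x_\star\|_2\cdot\frac{1}{m_1}\sum_{i=1}^{m_1}|\langle a_i,u\rangle\langle a_i,v\rangle|=\tilde{\Delta}(x)\cdot\frac{1}{m_1}\sum_{i=1}^{m_1}|\langle a_i,u\rangle\langle a_i,v\rangle|.
\]
Thus it suffices to show $\sup_{u,v\in\mathbb{S}^{n-1}}\frac{1}{m_1}\sum_{i=1}^{m_1}|\langle a_i,u\rangle\langle a_i,v\rangle|\leq \tfrac{3}{2}\|\Sigma\|_2$ with the claimed probability, uniformly in $x$.

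Next I would apply the elementary inequality $|\alpha\beta|\leq \tfrac{1}{2}(\alpha^2+\beta^2)$ pointwise to obtain
\[
\sup_{u,v\in\mathbb{S}^{n-1}}\frac{1}{m_1}\sum_{i=1}^{m_1}|\langle a_i,u\rangle\langle a_i,v\rangle|\leq \sup_{w\in\mathbb{S}^{n-1}}\frac{1}{m_1}\sum_{i=1}^{m_1}\langle a_i,w\rangle^2=\left\|\frac{1}{m_1}\sum_{i=1}^{m_1}a_ia_i^\top\right\|_2.
\]
By the triangle inequality this is bounded by $\|\Sigma\|_2+\bigl\|\frac{1}{m_1}\sum_i a_ia_i^\top-\Sigma\bigr\|_2$, and since the $a_i$ for $i\in[m_1]$ are i.i.d.\ $\sigma^2$-subGaussian with covariance $\Sigma$, \cref{sharpness_upper} (applied with $m_1$ samples) gives, for every $t\geq 0$ and with probability at least $1-\exp(-m_1 t)$,
\[
\left\|\frac{1}{m_1}\sum_{i=1}^{m_1}a_ia_i^\top-\Sigma\right\|_2\leq 11\sigma^2\max\Bigl\{\sqrt{\tfrac{4n}{m_1}+t},\ \tfrac{4n}{m_1}+t\Bigr\}.
\]
So the target bound will follow once the right-hand side is $\leq \tfrac{1}{2}\|\Sigma\|_2$.

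The main (but mild) obstacle is calibrating $t$ so that the max term is bounded by $\|\Sigma\|_2/(22\sigma^2)$, which is exactly the quantity that forces the two-branch definition of $u_1$. I would choose $t=u_1/2$, and use the hypothesis $m\geq 8n/(u_1(1-p_{\mathrm{fail}}))$ together with $m_1=(1-p_{\mathrm{fail}})m$ to ensure $4n/m_1\leq u_1/2$, so that $4n/m_1+t\leq u_1$. Then I would verify, by splitting on whether $\|\Sigma\|_2/(22\sigma^2)\leq 1$ or $>1$, that $\max\{\sqrt{u_1},u_1\}\leq \|\Sigma\|_2/(22\sigma^2)$ in both cases: if $\|\Sigma\|_2/(22\sigma^2)\leq 1$, then $u_1=(\|\Sigma\|_2/(22\sigma^2))^2\leq 1$ and $\sqrt{u_1}=\|\Sigma\|_2/(22\sigma^2)$ dominates; otherwise $u_1=\|\Sigma\|_2/(22\sigma^2)>1$ and the linear branch dominates but is still at most $u_1$. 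Either way $11\sigma^2\max\{\sqrt{u_1},u_1\}\leq \|\Sigma\|_2/2$, which yields $\bigl\|\frac{1}{m_1}\sum_i a_ia_i^\top\bigr\|_2\leq \tfrac{3}{2}\|\Sigma\|_2$ on this event. The exceptional probability is $\exp(-m_1 t)=\exp\!\bigl(-\tfrac{m(1-p_{\mathrm{fail}})u_1}{2}\bigr)$, matching \eqref{proof_stat_event1}. Combining this uniform matrix bound with the factorization from the first step yields the lemma.
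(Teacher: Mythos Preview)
Your proof is correct and follows essentially the same route as the paper's own proof: the paper invokes \cref{lip_F1} (which contains precisely your factorization and the $|\alpha\beta|\le\tfrac12(\alpha^2+\beta^2)$ step) to reduce to $\bigl\|\tfrac{1}{m_1}\sum_i a_ia_i^\top\bigr\|_2$, and then applies \cref{sharpness_upper} with $t=u_1/2$ and $m_1\ge 8n/u_1$ exactly as you do. Your write-up simply unpacks the factorization explicitly and supplies the case analysis on $\|\Sigma\|_2/(22\sigma^2)\lessgtr 1$ that the paper leaves implicit.
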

\begin{proof}
For $m_2=0$, i.e., $m=m_1$, we have $F(x_\star) = 0$ and Lemma~\ref{lip_F1} implies
$\frac{1}{m_1}\sum_{i\tcb{\in\mathcal{I}_1}} r_i(x) = F(x) - F(x_\star)  \leq \|\frac{1}{m_1}\sum_{i\tcb{\in\mathcal{I}_1}} a_ia_i^\top\|_2\tilde{\Delta}(x)$, for all $x\in\mathbb{R}^n$.
In this setting, letting $m_1\geq {8}n/u_1$ and $t = u_1/2$ in Lemma \ref{sharpness_upper}, we have that 
$\|\frac{1}{m_1}\sum_{i\tcb{\in\mathcal{I}_1}} a_ia_i^\top - \Sigma\|_2\leq \frac{1}{2} \|\Sigma\|_2$
holds with probability at least $1-\exp(-m_1u_1/2)$. Thus, under this event, we have 
$\frac{1}{m_1}\sum_{i\tcb{\in\mathcal{I}_1}} r_i(x) \leq \frac{3}{2}\|\Sigma\|_2\tilde{\Delta}(x)$ for all $x\in\mathbb{R}^n$.
Next, 
for $m_2>0$, using $m_1 = (1-p_{\mathrm{fail}})m$ within the above inequality completes the proof.
\end{proof}


Next, we {argue that
$\Tilde{r}_{\left(m(\Tilde{p} - p_{\mathrm{fail}})\right)}(x) \geq \Omega(\tilde{\Delta}(x))$ holds} for all $x\in\reals^n$
with high probability. 
\begin{lemma}\label{lemma-med_error-lower}
Suppose that Assumption \ref{ass_data} holds 
for $\Tilde{p}\in (0,1)$ such that $m\Tilde{p}\in\mathbb{N}$ and $p_{\mathrm{fail}}<\Tilde{p}<1-p_{\mathrm{fail}}$. {There exists $c_0>0$ such that for all $x\in\mathbb{R}^n$, $\Tilde{r}_{\left(m(\Tilde{p} - p_{\mathrm{fail}})\right)}(x)\geq \kappa^2\tilde{\Delta}(x)$ holds w.p.~at least
$1 - 2\exp\left(-\frac{m(1-p_{\mathrm{fail}})\kappa^2(p_0 - (1-\tilde{p})/(1-p_{\mathrm{fail}}))^2}{32}\right)$ whenever $m\geq \frac{c_0^2 n}{\kappa^2(1-p_{\mathrm{fail}})}\left(\frac{p_0 - (1-\tilde{p})/(1-p_{\mathrm{fail}})}{4}\right)^{-2}$.}
\end{lemma}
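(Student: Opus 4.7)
My plan is to reduce the order-statistic lower bound to a counting statement, establish pointwise-in-$(u,v)$ concentration via Hoeffding's inequality, and then lift this to a uniform bound over $\mathbb{S}^{n-1}\times\mathbb{S}^{n-1}$ by combining a covering argument with a Lipschitz smoothing of the discontinuous indicators. The main obstacle will be in the uniformity step, which I discuss at the end.

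First, for $x\neq\pm x_\star$, set $u\triangleq (x-x_\star)/\|x-x_\star\|_2$ and $v\triangleq (x+x_\star)/\|x+x_\star\|_2$ in $\mathbb{S}^{n-1}$. For each $i\in[m_1]$ the factorization $r_i(x)=|\langle a_i,x\rangle^2-\langle a_i,x_\star\rangle^2|=|\langle a_i,u\rangle|\,|\langle a_i,v\rangle|\,\tilde{\Delta}(x)$ shows that $\{\min(|\langle a_i,u\rangle|,|\langle a_i,v\rangle|)\geq \kappa\}$ implies $r_i(x)\geq\kappa^2\tilde{\Delta}(x)$. Since $\tilde{r}_{(m(\tilde{p}-p_{\mathrm{fail}}))}(x)\geq\kappa^2\tilde{\Delta}(x)$ is equivalent to at least $m_1-m(\tilde{p}-p_{\mathrm{fail}})+1=m(1-\tilde{p})+1$ indices in $[m_1]$ having $r_i(x)\geq\kappa^2\tilde{\Delta}(x)$, it suffices to prove the uniform lower bound
\[
N(u,v)\triangleq\sum_{i=1}^{m_1}\mathbf{1}[\min\{|\langle a_i,u\rangle|,|\langle a_i,v\rangle|\}\geq\kappa]\geq m(1-\tilde{p})+1,\qquad\forall(u,v)\in(\mathbb{S}^{n-1})^2,
\]
with the stated probability. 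For any fixed $(u,v)$, Assumption~\ref{ass_data}(c) gives $\mathbb{E}[N(u,v)]\geq m_1 p_0$, so writing $\delta_0\triangleq p_0-(1-\tilde{p})/(1-p_{\mathrm{fail}})>0$ one has $\mathbb{E}[N(u,v)]-m(1-\tilde{p})\geq m_1\delta_0$; Hoeffding's inequality for i.i.d.\ Bernoulli summands then yields $\mathbb{P}(N(u,v)<m(1-\tilde{p})+m_1\delta_0/2)\leq\exp(-m_1\delta_0^2/2)$, and as soon as $m_1\delta_0\geq 2$ this implies $N(u,v)\geq m(1-\tilde{p})+1$ with the same probability.

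To upgrade the pointwise estimate to a uniform one I would (i) control $R\triangleq\max_{i\in[m]}\|a_i\|_2\lesssim\sigma\sqrt{n}$ with probability at least $1-e^{-cn}$ via a standard subGaussian maximal inequality, (ii) replace each discontinuous indicator $\mathbf{1}[|t|\geq\kappa]$ by a $(2/\kappa)$-Lipschitz surrogate $\chi_\kappa$, so that the smoothed process $(u,v)\mapsto\sum_i\chi_\kappa(\langle a_i,u\rangle)\chi_\kappa(\langle a_i,v\rangle)$ is $\mathcal{O}(m_1R/\kappa)$-Lipschitz on $(\mathbb{S}^{n-1})^2$, (iii) choose an $\epsilon$-net of $(\mathbb{S}^{n-1})^2$ at resolution $\epsilon\sim\kappa\delta_0/R$ whose cardinality is at most $(CR/(\kappa\delta_0))^{2n}$, and (iv) apply the pointwise Hoeffding bound of the previous paragraph at every net point and take a union bound. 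The resulting sample-complexity condition $m_1\kappa^2\delta_0^2\gtrsim n$ matches the stated $m\geq c_0^2 n/(\kappa^2(1-p_{\mathrm{fail}})(\delta_0/4)^2)$, and the exponent matches the stated $m(1-p_{\mathrm{fail}})\kappa^2\delta_0^2/32$.

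The main obstacle lies in Step~(ii): a \emph{lower} Lipschitz surrogate $\chi_\kappa\leq\mathbf{1}[|t|\geq\kappa]$ with Lipschitz constant $2/\kappa$ is forced to satisfy $\chi_\kappa(t)\geq\mathbf{1}[|t|\geq 3\kappa/2]$, so its expectation is controlled by $\mathbb{P}(\min(|\langle a,u\rangle|,|\langle a,v\rangle|)\geq 3\kappa/2)$ rather than by $p_0$, and this quantity is not directly provided by Assumption~\ref{ass_data}(c). Two natural ways to close this gap are: (a) run the entire argument at the rescaled threshold $\kappa'=2\kappa/3$, which still satisfies the gap condition since $\mathbb{P}(\min\geq 2\kappa/3)\geq\mathbb{P}(\min\geq\kappa)\geq p_0$, absorbing the constant-factor slack into $c_0$; or (b) invoke a VC-type uniform-convergence bound directly, exploiting that each set $\{a:\min(|\langle a,u\rangle|,|\langle a,v\rangle|)\geq\kappa\}$ is a Boolean combination of four half-spaces and hence the set class has VC dimension $\mathcal{O}(n)$. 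Either route delivers the stated bound up to constants.
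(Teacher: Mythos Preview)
Your reduction to the counting event $N(u,v)>m(1-\tilde p)$ via the factorization $r_i(x)=|\langle a_i,u\rangle|\,|\langle a_i,v\rangle|\,\tilde\Delta(x)$ is exactly what the paper does. For the uniformity step, the paper does not carry out either of your two routes from scratch: it directly invokes a uniform deviation bound appearing in the proof of \cite[Proposition~1]{duchi2019solving}, namely that for a numerical constant $c_0$ and all $t\geq 0$,
\[
\mathbb{P}\Bigl(\sup_{u,v\in\mathbb{S}^{n-1}}\kappa\Bigl|\tfrac{1}{m_1}\sum_{i=1}^{m_1}h_i(u,v)-\mathbb{E}[h_i(u,v)]\Bigr|\geq c_0\sqrt{n/m_1}+t\Bigr)\leq 2\exp\bigl(-m_1t^2/2\bigr),
\]
and then substitutes $t=\kappa\delta_0/4$. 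This cited inequality is precisely the content of your route~(b) (the class $\{a:\min(|\langle a,u\rangle|,|\langle a,v\rangle|)\geq\kappa\}$ is a Boolean combination of half-spaces, so VC/Rademacher control applies), so your proposal is correct and aligned with the paper's argument; you simply spell out what the paper imports as a black box.

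One small inaccuracy in your route~(a): running the smoothing argument at threshold $\kappa'=2\kappa/3$ indeed restores the expectation bound $\mathbb{E}[\chi_{\kappa'}(\cdot)\chi_{\kappa'}(\cdot)]\geq p_0$, but the conclusion then reads $\tilde r_{(m(\tilde p-p_{\mathrm{fail}}))}(x)\geq(\kappa')^2\tilde\Delta(x)=\tfrac{4}{9}\kappa^2\tilde\Delta(x)$, not $\kappa^2\tilde\Delta(x)$. This $4/9$ sits in the lower bound itself, not in $c_0$ (which only enters the sample-size requirement), so it cannot be ``absorbed into $c_0$'' as you wrote. The weaker constant is harmless for Theorem~\ref{thm_assumptions_high_prob}, where only the existence of some $u_L>0$ matters, but route~(a) alone does not recover the lemma exactly as stated; route~(b) does.
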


\begin{proof}
When $\Delta(x) = 0$, the conclusion obviously holds; otherwise, let $u = (x-x_\star)/\|x-x_\star\|_2$, $v = (x+x_\star)/\|x+x_\star\|_2$, and for all 
$i\in \tcb{\mathcal{I}_1}$,
we define
\begin{equation*}
h_i(u,v) = \mathbf{1}[|\fprod{a_i,u}|\geq \kappa]\ \mathbf{1}[|\fprod{a_i,v}|\geq \kappa].
\end{equation*}%
\tcb{By the last display-mode equation in the proof of  \cite[Proposition 1]{duchi2019solving}, there exists a numerical constant $c_0<\infty$ such that for any $t\geq 0$,
\begin{equation}\label{eq_duchi_VC_concentration}
{
	\begin{aligned}
		\mathbb{P}\left(\sup _{u, v \in \mathbb{S}^{n-1}} \kappa\left|\frac{1}{m_1} \sum_{i\in\mathcal{I}_1} h_i(u, v)-\mathbb{E}\left[h_i(u, v)\right]\right| \geq c_0 \sqrt{\frac{n}{m_1}}+t\right) \leq 2 \exp \left(-\frac{m_1 t^2}{2}\right).
\end{aligned}}%
\end{equation}
The value of $c_0$ is independent of any problem parameters and data.}
In addition, for all 
$i\in \tcb{\mathcal{I}_1}$,
\begin{equation}\label{eq_proof_stat_lower_indicator}
r_i(x) = |\fprod{a_i,u}|\ |\fprod{a_i,v}|\ \tilde{\Delta}(x)\geq \kappa^2 h_i(u,v)\tilde{\Delta}(x).
\end{equation}
Next, we consider the event 
$\frac{1}{m_1}\sum_{i\tcb{\in\mathcal{I}_1}}h_i(u,v)>\frac{1-\tilde{p}}{1-p_{\mathrm{fail}}}$.
Knowing that $h_i(u,v)$ takes value in $\{0,1\}$, the event indicates that number of 0 in 
$\{h_i(u,v)\}_{i\tcb{\in\mathcal{I}_1}}$
is strictly less than $m_1 - m_1(1-\tilde{p})/(1-p_{\mathrm{fail}}) = m(\tilde{p} - p_{\mathrm{fail}})$. Together with \eqref{eq_proof_stat_lower_indicator}, this event indicates that $\Tilde{r}_{\left(m(\Tilde{p} - p_{\mathrm{fail}})\right)}(x)\geq\kappa^2 \tilde{\Delta}(x)$. To conclude, 
\begin{equation}\label{eq_proof_stat_lower_indicator2}
\begin{aligned}
\Tilde{r}_{\left(m(\Tilde{p} - p_{\mathrm{fail}})\right)}(x)\geq \kappa^2 \tilde{\Delta}(x)~\mathbf{1}\left[\frac{1}{m_1}\sum_{i\tcb{\in\mathcal{I}_1}}h_i(u,v)>\frac{1-\tilde{p}}{1-p_{\mathrm{fail}}}\right].
\end{aligned}
\end{equation}
Based on (c) in Assumption \ref{ass_data}, we have
$\mathbb{E}h_i(u,v)\geq p_0>\frac{1-\tilde{p}}{1-p_{\mathrm{fail}}}$ for all $u,v\in\mathbb{S}^{n-1}$ and $i\tcb{\in\mathcal{I}_1}$
\tcb{Together with \eqref{eq_duchi_VC_concentration},} if $m\geq \frac{c_0^2 n}{\kappa^2(1-p_{\mathrm{fail}})}\left(\frac{p_0 - (1-\tilde{p})/(1-p_{\mathrm{fail}})}{4}\right)^{-2}$ and $t = \kappa\frac{p_0 - (1-\tilde{p})/(1-p_{\mathrm{fail}})}{4}$, then 
w.p. at least
$1 - 2\exp\left(-\frac{m(1-p_{\mathrm{fail}})\kappa^2(p_0 - (1-\tilde{p})/(1-p_{\mathrm{fail}}))^2}{32}\right)$, for all $u,v\in\mathbb{S}^{n-1}$ we have
{
\begin{equation*}
\frac{1}{m_1}\sum_{i\tcb{\in\mathcal{I}_1}}h_i(u,v)\geq \mathbb{E}\left[h_i(u, v)\right] - c_0 \sqrt{\frac{n}{m_1\kappa^2}}-t/\kappa\geq \frac{p_0+\frac{1-\tilde{p}}{1-p_{\mathrm{fail}}}}{2}>\frac{1-\tilde{p}}{1-p_{\mathrm{fail}}}.
\end{equation*}}%
Together with \eqref{eq_proof_stat_lower_indicator2}, this indicates that
$\Tilde{r}_{\left(m(\Tilde{p} - p_{\mathrm{fail}})\right)}(x)\geq \kappa^2\tilde{\Delta}(x)$ for all $x\in\mathbb{R}^n$.
\end{proof}

Combining the event in Lemma \ref{lemma_upper_bound_mean_r_noiseless} and the event in Lemma \ref{lemma-med_error-lower}, applying \eqref{eq_proof_stat_lu_summary}, we can \tcb{derive} that, when $m\geq u_2 n$ where {$u_2 \triangleq \frac{8}{1-p_{\mathrm{fail}}}\max\Big\{\frac{1}{u_1}, \frac{2c_0^2}{\kappa^2}\left(p_0 - \frac{1-\tilde{p}}{1-p_{\mathrm{fail}}}\right)^{-2}\Big\}$, 
w.p.} at least
$1-3\exp(-u_3m)$ where {$u_3 \triangleq \frac{1-p_{\mathrm{fail}}}{2}\min\Big\{\frac{\kappa^2}{16}\left(p_0 - \frac{1-\tilde{p}}{1-p_{\mathrm{fail}}}\right)^2,~u_1\Big\}$}, the following event holds:
\begin{equation}\label{proof_stat_event2}
u_4\tilde{\Delta}(x)\leq r_{(m\Tilde{p})}(x)\leq u_5\tilde{\Delta}(x),\quad \forall~x\in\mathbb{R}^n, 
\end{equation}
where $u_4 \triangleq \kappa^2$ and {$u_5 \triangleq \frac{3}{2}\|\Sigma\|_2\Big(1-\frac{\Tilde{p}}{1 - p_{\mathrm{fail}}}\Big)^{-1}$}.

\subsection{$F(x) - F(x_\star) = 
{\Theta}(\tilde{\Delta}(x))$ with high probability}
First, we provide an upper bound for $F(x) - F(x_\star)$. 
Letting $m\geq 8n/u_1$ and $t = u_1/2$ in Lemma \ref{sharpness_upper}, we have that $\|\frac{1}{m}\sum_{i=1}^{m} a_ia_i^\top - \Sigma\|_2\leq \frac{1}{2} \|\Sigma\|_2$ holds with probability at least $1-\exp(-mu_1/2)$. Thus, under this event, together with Lemma \ref{lip_F1}, and noticing that $L = 2\left\|\frac{1}{m}\sum_{i=1}^ma_ia_i^\top\right\|_2$,
we have $F(x)  - F(x_\star) \leq \frac{3}{2}\|\Sigma\|_2\tilde{\Delta}(x)$.

Next, we discuss 
{a} lower bound for $F(x) - F(x_\star)$. 
\tcb{From Lemma \ref{lemma_stat_sharpness},} for $m\geq \left(\frac{\kappa_{\mathrm{st}} - 2p_{\mathrm{fail}}\|\Sigma\|_2}{4{C}\sigma^2}\right)^{-3} n$ and $t = \frac{\kappa_{\mathrm{st}} - 2p_{\mathrm{fail}}\|\Sigma\|_2}{4{C}\sigma^2}$, {it holds with probability at least $1-4\exp\Big(-m\min\{c_3, \big(\frac{\kappa_{\mathrm{st}} - 2p_{\mathrm{fail}}\|\Sigma\|_2}{4{C}\sigma^2}\big)^2\}\Big)$ that}
$F(x) - F(x_\star)\geq \frac{\kappa_{\mathrm{st}} - 2p_{\mathrm{fail}}\|\Sigma\|_2}{2}\tilde{\Delta}(x)$ for all $x\in\reals^n$.
Thus, for $m\geq u_6n$ with 
{
$$u_6 \triangleq \max\Big\{\frac{8}{u_1},~\left(\frac{4C_1\sigma^2}{\kappa_{\mathrm{st}} - 2p_{\mathrm{fail}}\|\Sigma\|_2}\right)^{3}\Big\},$$}%
it holds w.p. at least $1-5\exp(-m u_7)$ that
\begin{equation}\label{proof_stat_event3}
u_8\tilde{\Delta}(x) \leq F(x) - F(x_\star)\leq u_9\tilde{\Delta}(x),\quad \forall x\in\mathbb{R}^n,
\end{equation}
where $u_7 \triangleq \min\Big\{c_3, \Big(\frac{\kappa_{\mathrm{st}} - 2p_{\mathrm{fail}}\|\Sigma\|_2}{4C_1\sigma^2}\Big)^2,\frac{u_1}{2}\Big\}$, $u_8 \triangleq \frac{\kappa_{\mathrm{st}} - 2p_{\mathrm{fail}}\|\Sigma\|_2}{2}$ and $u_9 \triangleq \frac{3}{2}\|\Sigma\|_2$. Finally, we finish the proof {of Theorem \ref{thm_assumptions_high_prob}} by combining \eqref{proof_stat_event2} and \eqref{proof_stat_event3}; indeed, whenever $m\geq \rho_1 n$ for $\rho_1 = \max\{u_2, u_6\}$, \eqref{condition_med_prop} holds for $u_L = u_4/u_9$ and $u_H = u_5/u_8$ 
w.p. at least $1-\rho_2\exp(-m\rho_3)$ with $\rho_2 = 8$ and $\rho_3 = \min\{u_3,u_7\}$.


\section{Convergence Rate of {\adasub}}\label{sec:adasub}

In this section,
we 
{establish the local linear convergence of \adasub{} (Algorithm \ref{alg:adaptive-sub}).} 
\begin{theorem}[Convergence Rate of \adasub]\label{thm:convergence_ada_subgrad}
\tcb{Suppose that Assumption \ref{ass:sharpness} holds. Under the statistical event in \eqref{condition_med_prop}, which holds with high probability under Assumption \ref{ass_data}, if $x^0$ satisfies $\Delta(x^0)\leq \lambda_s(1-\frac{c_2}{2})/L$, then 
\emph{\adasub{}} sequence $\{x^k\}_{k=0}^\infty$ satisfies that}
\begin{equation}
\label{eq:adasubgrad-rate}
{
\begin{aligned}
	\Delta(x^k)\leq \left(\sqrt{1-\frac{2\lambda_s^2(1-c^2)}{9L^2\|x_\star\|_2^2}}\right)^k\Delta(x^0){\triangleq R_k},\quad \forall k\in\mathbb{N},
	\end{aligned}}%
\end{equation}
where $c \triangleq \max\{|1-c_1|,|1-c_2|\}$, and $c_1,c_2$ are constants in \Cref{prop_alpha_and_tk}.
\end{theorem}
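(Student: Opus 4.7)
The plan is to adapt the classical convergence analysis of the Polyak subgradient method, compensating for the fact that the adaptive step size $\alpha_k$ is only \emph{proportional} to $F(x^k)-F(x_\star)$ rather than equal to it. I would fix $k$ and choose the sign $x_\star^k \in \{x_\star,-x_\star\}$ that realizes $\Delta(x^k)=\|x^k-x_\star^k\|_2$, so that weak convexity can be freely invoked against $x_\star^k$. Expanding the standard recursion
\[
\|x^{k+1}-x_\star^k\|_2^2 = \Delta(x^k)^2 - \frac{2\alpha_k\langle \xi^k,\, x^k - x_\star^k\rangle}{\|\xi^k\|_2^2} + \frac{\alpha_k^2}{\|\xi^k\|_2^2}
\]
and applying \eqref{eq_weakly_convex_useful} with $x=x_\star^k$, $\bar x = x^k$ (so $\langle \xi^k,\, x^k-x_\star^k\rangle \ge \eta_k - L\Delta(x^k)^2/2$, where $\eta_k \triangleq F(x^k)-F(x_\star)$) yields
\[
\|x^{k+1}-x_\star^k\|_2^2 \le \Delta(x^k)^2 - \frac{\alpha_k}{\|\xi^k\|_2^2}\bigl[2\eta_k - L\Delta(x^k)^2 - \alpha_k\bigr].
\]

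Next, I would invoke Corollary~\ref{prop_alpha_and_tk}(a) to write $\alpha_k = \beta_k\eta_k$ with $\beta_k \in [c_1,c_2]\subset(0,2)$, rewriting the bracket as $(2-\beta_k)\eta_k - L\Delta(x^k)^2$. Using sharpness $\eta_k \ge \lambda_s\Delta(x^k)$ once \emph{inside} the bracket gives the lower bound $\Delta(x^k)\bigl[(2-\beta_k)\lambda_s - L\Delta(x^k)\bigr]$, and since $\beta_k \le c_2$, the hypothesis $\Delta(x^k)\le \Delta(x^0)\le \lambda_s(1-c_2/2)/L = \lambda_s(2-c_2)/(2L)$ guarantees $(2-\beta_k)\lambda_s - L\Delta(x^k) \ge (2-\beta_k)\lambda_s/2$. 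Hence the bracket is at least $(2-\beta_k)\lambda_s\Delta(x^k)/2$.

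The final ingredient is the elementary identity $\beta_k(2-\beta_k) = 1 - (1-\beta_k)^2 \ge 1 - c^2$, together with a \emph{second} use of sharpness for the factor $\eta_k$ sitting outside the bracket. This produces
\[
\frac{\beta_k \eta_k}{\|\xi^k\|_2^2}\bigl[(2-\beta_k)\eta_k - L\Delta(x^k)^2\bigr] \ge \frac{(1-c^2)\lambda_s^2\, \Delta(x^k)^2}{2\|\xi^k\|_2^2}.
\]
For the subgradient norm, Lemma~\ref{rel_lamL} and the initial-point condition give $\Delta(x^k)\le \|x_\star\|_2/2$, so Lemma~\ref{Lip_F} yields $\|\xi^k\|_2 \le \tfrac{3}{2}L\|x_\star\|_2$ and consequently $\|\xi^k\|_2^2 \le \tfrac{9}{4}L^2\|x_\star\|_2^2$. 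Combining these pieces produces the one-step contraction
\[
\Delta(x^{k+1})^2 \le \|x^{k+1}-x_\star^k\|_2^2 \le \Bigl(1 - \tfrac{2(1-c^2)\lambda_s^2}{9L^2\|x_\star\|_2^2}\Bigr)\Delta(x^k)^2,
\]
from which an induction on $k$ (the contraction itself preserves $\Delta(x^k)\le \Delta(x^0)$, closing the loop) delivers \eqref{eq:adasubgrad-rate}.

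The main obstacle is coordinating the two invocations of sharpness: the first must be spent \emph{inside} the bracket so that positivity holds under a hypothesis depending only on $c_2$ (not on $c$), while the second has to be saved for \emph{outside} the bracket, where, combined with $\beta_k(2-\beta_k)\ge 1-c^2$, the claimed $(1-c^2)\lambda_s^2$ factor in the rate finally surfaces. Using sharpness twice in either location appears to either tighten the required region of attraction or weaken the contraction constant below the stated one.
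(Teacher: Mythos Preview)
Your proposal is correct and follows essentially the same route as the paper: expand the squared distance, apply weak convexity \eqref{eq_weakly_convex_useful}, use sharpness once inside the bracket to secure positivity under the $c_2$-dependent radius and once outside together with $\beta_k(2-\beta_k)\ge 1-c^2$, and bound $\|\xi^k\|_2\le \tfrac32 L\|x_\star\|_2$ via the local Lipschitz constant. The only cosmetic difference is that the paper fixes a single $\hat x\in\{x_\star,-x_\star\}$ closest to $x^0$ and inductively tracks $\|x^k-\hat x\|_2$ (arguing along the way that $\Delta(x^k)=\|x^k-\hat x\|_2$), whereas you re-select $x_\star^k$ at each step and use $\Delta(x^{k+1})\le\|x^{k+1}-x_\star^k\|_2$; both close the induction in the same way.
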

\begin{proof}
{Define $e_k \triangleq \alpha_k/(F(x^k) - F(x_\star))$ for $k\geq 0$.} From \Cref{prop_alpha_and_tk}, $e^k\in [c_1,c_2]\subseteq (0,2)$ and $e^k(2-e^k)\geq 1-c^2$ for $k\in\mathbb{N}$. {Note that $1-\frac{2\lambda_s^2(1-c^2)}{9L^2\|x_\star\|_2^2}\in (0,1)$ from Lemma~\ref{rel_lamL}.} {Let $\hat{x}\in\{x_\star,-x_\star\}$ such that} $\Delta(x^0) = \|x^0 - \hat{x}\|_2$. {In the rest we establish \eqref{eq:adasubgrad-rate} by showing $\norm{x^k-\hat{x}}_2\leq R_k$ for $k\geq 0$ using induction. Note that the base case of the induction for $k=0$ trivially holds, i.e., $\|x^0 - \hat{x}\|_2\leq R_0=\Delta(x^0)$. Next, we assume that $\norm{x^k-\hat{x}}_2\leq R_k$ holds for some $k\in\integers_+$, and we show that it also holds for $k+1$.}  From the update rule in \eqref{iter_adasub}, we have 
\begin{equation}
\label{proof_subgrad_eq1}
{
\begin{aligned}
	\left\|x^{k+1}-\hat{x}\right\|_2^{2} &=\left\|x^{k}-\hat{x}\right\|_2^{2}+2\left\langle x^{k}-\hat{x}, x^{k+1}-x^{k}\right\rangle+\left\|x^{k+1}-x^{k}\right\|_2^{2} \\
	&=\left\|x^{k}-\hat{x}\right\|_2^{2}+\frac{2e_k\left(F(x^k)-F(\hat{x})\right)}{\left\|\xi^{k}\right\|_2^{2}} \cdot\left\langle\xi^{k}, \hat{x}-x^{k}\right\rangle+e_k^2\frac{\left(F(x^k)-F(\hat{x})\right)^{2}}{\left\|\xi^{k}\right\|_2^{2}}. 
	\end{aligned}}%
\end{equation}
{\eqref{eq_weakly_convex_useful} with $x=\hat{x},\bar x = x^k,v=\xi^k$ implies} $\left\langle\xi^{k}, \hat{x}-x^{k}\right\rangle \leq F(\hat{x})-F(x^k)+\frac{L}{2}\left\|x^{k}-\hat{x}\right\|^{2}.$
Applying it to \eqref{proof_subgrad_eq1}, we can further find that
\begin{equation}\label{proof_subgradient_eq2}
{
\begin{aligned}
	\|x^{k+1}-\hat{x}\|_2^{2} \leq\|x^{k}-\hat{x}\|_2^{2}+\frac{e_k\left(F(x^k)-F(\hat{x})\right)}{\left\|\xi^{k}\right\|_2^{2}}\left(L\left\|x^{k}-\hat{x}\right\|_2^{2}-(2-e_k)\left(F(x^k)-F(\hat{x})\right)\right).
	\end{aligned}}%
\end{equation}
Next, we discuss three quantities {appearing} in \eqref{proof_subgradient_eq2}.
\paragraph{Term 1: 
$\|x^{k}-\hat{x}\|_2^{2}$} By the induction hypothesis, we have {$\norm{x^k-\hat{x}}_2\leq R_k
\leq \Delta(x^0)$.} Thus, ${\norm{x^k-\hat{x}}_2}\leq \Delta(x^0)\leq \lambda_s(2-c_2)/(2L)\leq \lambda_s/L$, {where the second inequality follows from the hypothesis and the last one from $c_2>0$. Moreover, the last inequality and Lemma \ref{rel_lamL} imply that} $\|x^k-\hat{x}\|_2\leq \|x_\star\|_2/2$. Thus, $\|x^k+\hat{x}\|_2\geq 2\|\hat{x}\|_2 - \|x^k-\hat{x}\|_2 = 2\|x_\star\|_2- \|x^k-\hat{x}\|_2\geq 3\|x_\star\|_2/2\geq \|x^k-\hat{x}\|_2$, which means that $x^k$ is closer to $\hat{x}$ compared to $-\hat{x}$ so that $\Delta(x^k) = \|x^k-\hat{x}\|_2$. To conclude, 
\begin{equation}\label{eq_upper_delta_xk}
\Delta(x^k) = \|x^k-\hat{x}\|_2\leq \Delta(x^0)\leq \|x_\star\|_2/2.
\end{equation}

\paragraph{Term 2: $\|\xi^k\|$} {For any $\epsilon>0$ and arbitrary $\xi^k\in \partial F(x^k)$ such that $\xi^k\neq \mathbf{0}$, since} $x^k$ is an interior point of $S_{\epsilon}^k\triangleq \{x\in\mathbb{R}^n:\Delta(x)\leq \Delta(x^k) + \epsilon\}$, {there exists $t_0>0$ such that $x^k + t\xi^k\in S_{\epsilon}^k$ for $t\in [0,t_0]$. 
In \eqref{eq_weakly_convex_useful}, setting $x = \tilde{x}^k \triangleq x^k + t\xi^k$ for some $t\in (0,t_0]$, $\bar x = x^k$ and $v=\xi^k$, we get
$F(\tilde{x}^k) - F(x^k) + Lt^2\|\xi^k\|_2^2/2\geq t\|\xi^k\|_2^2.$
Thus, noticing that $\|\tilde{x}^k-x^k\|_2 = t\|\xi^k\|_2$, 
we can obtain $\|\xi^k\|_2\leq \frac{F(\tilde{x}^k) - F(x^k)}{\|\tilde{x}^k-x^k\|_2} + Lt\|\xi^k\|_2/2$. Moreover, $\frac{F(\tilde{x}^k) - F(x^k)}{\|\tilde{x}^k-x^k\|_2}\leq \sup\Big\{ \frac{|F(x) - F(y)|}{\left\| x-y\right\|_2}:\ x,y\in {S_{\epsilon}^k},~x\neq y\Big\}$ since $\tilde{x}^k,x^k\in S_{\epsilon}^k$. Therefore, letting $t\rightarrow 0$,}
we have that 
{
\begin{equation*}
\|\xi^k\|_2\leq \sup_{x,y\in {S_{\epsilon}^k},~x\neq y} \frac{|F(x) - F(y)|}{\left\| x-y\right\|_2}{\triangleq B_\epsilon}.
\end{equation*}}%
By Lemma \ref{Lip_F} and \eqref{eq_upper_delta_xk},
we have that 
\begin{equation}\label{eq_subgrad_upper}
\|\xi^k\|_2\leq {\lim_{\epsilon\to 0}B_\epsilon=}\lim_{r\rightarrow \Delta(x^k)} L(\|x_\star\|_2+r) = L(\|x_\star\|_2+\Delta(x^k))\leq \frac{3}{2}L\|x_\star\|_2.
\end{equation}
{Since \eqref{eq_subgrad_upper} trivially holds when $\xi^k = \mathbf{0}$, we have $\|\xi^k\|_2\leq \frac{3}{2}L\|x_\star\|_2$ for all $\xi_k\in \partial F(x^k)$.}
\paragraph{Term 3: $L\left\|x^{k}-\hat{x}\right\|_2^{2}-(2-e_k)\left(F(x^k)-F(\hat{x})\right)$} Due to \eqref{eq_upper_delta_xk} and \eqref{ineq:sharpness} and $e_k\in (0,2)$, we have $(2-e_k)(F(x^k)-F(\hat{x})) \geq \lambda_s(2-e_k)\|x^k - \hat{x}\|_2$. Thus, 
\begin{equation*}
L\left\|x^{k}-\hat{x}\right\|_2^{2}-(2-e_k)\left(F(x^k)-F(\hat{x})\right)\leq \|x^k - \hat{x}\|_2\Big(L\|x^{k}-\hat{x}\|_2-\lambda_s(2-e_k)\Big).
\end{equation*}
{Recall that by the hypothesis, $x^0$ satisfies $\Delta(x^0)\leq \lambda_s(1-\frac{c_2}{2})/L$; therefore, it follows from \eqref{eq_upper_delta_xk} and $e_k\in [c_1,c_2]\subset (0,2)$ that} $L\|x^k-\hat{x}\|_2\leq L\Delta(x^0)\leq \lambda_s(2-c_2)/2\leq \lambda_s(2-e_k)/2$. Thus, we get $\|x^k - \hat{x}\|_2(L\|x^{k}-\hat{x}\|_2-\lambda_s(2-e_k))\leq {-}\lambda_s(2-e_k)\|x^k - \hat{x}\|_2/2$. To conclude, we have
\begin{equation}\label{eq_compare_first_second}
L\left\|x^{k}-\hat{x}\right\|_2^{2}-(2-e_k)\left(F(x^k)-F(\hat{x})\right)\leq -\lambda_s(2-e_k)\|x^k-\hat{x}\|_2/2.
\end{equation}
{Thus, using \eqref{eq_subgrad_upper} and \eqref{eq_compare_first_second} within} \eqref{proof_subgradient_eq2}, we have 
\begin{equation*}
\|x^{k+1} - \hat{x}\|_2^2\leq \|x^{k} - \hat{x}\|_2^2 - \frac{2\lambda_se^k(2-e^k)}{9L^2\|x_\star\|_2^2}(F(x^k) - F(\hat{x}))\|x^k - \hat{x}\|_2.
\end{equation*}
From \eqref{ineq:sharpness}, we have $F(x^k) - F(\hat{x})\geq \lambda_s\|x^k - \hat{x}\|_2$, which combined with the above inequality implies
\begin{equation}\label{proof_subgrad_eq0}
{
\begin{aligned}
	\|x^{k+1} - \hat{x}\|_2\leq \sqrt{1-\frac{2\lambda_s^2e^k(2-e^k)}{9L^2\|x_\star\|_2^2}}\|x^k - \hat{x}\|_2,\quad \forall k\in\mathbb{N}.
	\end{aligned}}%
\end{equation}
{Using $e^k(2-e^k)\geq 1-c^2$ and the inductive hypothesis, i.e., $\norm{x^k-\hat{x}}_2\leq R_k$, within this inequality
implies that $\|x^{k+1} - \hat{x}\|_2\leq R_{k+1}$ completing the induction, which also implies \eqref{eq:adasubgrad-rate} since $\Delta(x^k)\leq \norm{x^k-\hat{x}}_2$ for all $k\geq 0$.}
\end{proof}
\begin{remark}\label{remark_init_subgradiant}
The initialization requirement $\Delta(x^0)\leq \lambda_s(1-c_2/2)/L$ in Theorem \ref{thm:convergence_ada_subgrad} can be satisfied by \tcb{the output of} \cite[Algorithm 3]{duchi2019solving} with high probability when $m/n$ is large enough and $p_{\mathrm{fail}}$ is small enough. In fact, under some regularity assumptions, \cite[Theorem 3]{duchi2019solving} claims that for any $C_{\mathrm{init}}>0$, $x^0$ returned by Algorithm 3 in \cite{duchi2019solving} satisfies $\Delta(x^0)\leq C_{\mathrm{init}}$ when $p_{\mathrm{fail}} < 1/4$ and $m/n\geq \Omega((\|x_\star\|_2/C_{\mathrm{init}})^2)$.
\end{remark}
{Theorem \ref{thm:convergence_ada_subgrad} shows that Algorithm \ref{alg:adaptive-sub} takes $\cO(\frac{\kappa_0^2}{1-c^2}\log \frac{1}{\epsilon})$ iterations to find an $\epsilon$-optimal solution for any 
$\epsilon\in(0,\Delta(x^0))$. According to \Cref{prop_alpha_and_tk}(a), this complexity result is guaranteed to hold with high probability when the parameter $G>0$ appearing in the step-size choice in~\eqref{choice_alpha_subg} is sufficiently small such that $G<2/u_H=\frac{2}{3}(\kappa_{\mathrm{st}}/\|\Sigma\|_2 - 2p_{\mathrm{fail}})(1-\frac{\tilde p}{1-p_{\rm fail}})\approx\frac{2}{3}\kappa_0(1-\frac{\tilde p}{1-p_{\rm fail}})$ when $m/n$ large and $p_{\mathrm{fail}}$ sufficiently small\tcb{---}see \eqref{eq:kappa0}. 
The factor $1-c^2$ represents how well {our adaptive step size $\alpha_k$ in \eqref{choice_alpha_subg} approximates $F(x^k) - F(x_\star)$, i.e., $1-c^2$ measures how close \adasub{} update rule in \eqref{iter_adasub} mimicks
\psub{}; indeed, in the ideal situation where $c=0$, which might be impossible in practice, \psub{} and \adasub{} coincide.} When $G$ is overly small such that $c_2 = u_H G\leq 1$, {we have $c=1-c_1$ since $0<c_1\leq c_2<1$; hence, $1-c^2 = c_1(2-c_1)\geq c_1$ and the required iterations is $\cO(\frac{\kappa_0^2}{c_1}\log \frac{1}{\epsilon})$. Since $c_1 = u_LG$, the complexity bound in terms of $G$ and $\epsilon$ becomes $\cO(\frac{1}{G}\log \frac{1}{\epsilon})$.}}

\section{Convergence Rate of \adaipl{}}\label{sec:conv_adaipl}
In this section, {we analyze the convergence properties of \adaipl{} stated in Algorithm~\ref{alg:adaptive-IPL}.} 
\subsection{Convergence Behaviours for Outer Iterations} {In this subsection, we analyze a prototype of \adaipl{} where we do not impose $t_k$ being selected based on \eqref{choice_t_diminish}; instead, we establish convergence of \adaipl{} for $\{t_k\}$ satisfying a more general condition.}
{More precisely, in the result below, we provide a one-step analysis of \adaipl{}} when $t_k\in (0,L^{-1}]$ and $\Delta(x^k)$ is small.
\begin{lemma}\label{lemma_traj_gen_tk}
{Under Assumption~\ref{ass:sharpness}, if $x^k$ satisfies} $\Delta(x^k)\leq \lambda_s/(4L)$ and $t_k\in (0,L^{-1}]$ {for some $k\geq 0$, then the following conclusions hold} for Algorithm \ref{alg:adaptive-IPL}.
\begin{itemize}
\item[(a)] When \textbf{(LAC)} in \eqref{eq:low-high-practical} holds with $\rho_l\geq 0$, we have that
\begin{equation*}
F(x^{k+1}) - F(x_\star)\leq \left(1 - \frac{5\min\{\lambda_st_k/(2\Delta(x^k)),1\}}{8(1+\rho_l)}\right)(F(x^k) - F(x_\star)).
\end{equation*}
\item[(b)] When \textbf{(HAC)} in \eqref{eq:low-high-practical} holds with $\rho_h\in [0,1/4)$, we have that 
\begin{equation*}
F(x^{k+1}) - F(x_\star)\leq \left(1 - \frac{5\min\{\lambda_st_k/(2\Delta(x^k)),1\}}{8(1+\frac{2\rho_h}{1-4\rho_h})}\right)(F(x^k) - F(x_\star)).
\end{equation*}%
\end{itemize}
\end{lemma}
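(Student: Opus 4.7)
The plan is to separate the argument into a common ``exact'' descent inequality and then absorb the inexactness from \textbf{(LAC)} and \textbf{(HAC)}, each of which implies its exact counterpart via \cref{sufficiency_pd}. Let $f_k \triangleq F(x^k)-F(x_\star)$, let $\hat x\in\{x_\star,-x_\star\}$ satisfy $\|x^k-\hat x\|_2=\Delta(x^k)$, and set $x^{k+1,\star}\triangleq\argmin_{z\in\mathbb{R}^n} F_{t_k}(z;x^k)$. The central estimate I will establish is
\[
\min_{z\in\mathbb{R}^n}F_{t_k}(z;x^k)\;\le\;F(x^k)-\alpha f_k,\qquad \alpha\triangleq\frac{5}{8}\min\Big\{\frac{\lambda_s t_k}{2\Delta(x^k)},\,1\Big\}.
\]

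To obtain this I feed the test point $z_\tau\triangleq(1-\tau)x^k+\tau\hat x$, $\tau\in[0,1]$, into $F_{t_k}(\cdot;x^k)$: convexity of the affine-composed map $F(\cdot;x^k)$ yields $F(z_\tau;x^k)\le(1-\tau)F(x^k)+\tau F(\hat x;x^k)$, and \cref{thm:gen_weak} (with $t=1/L$) gives $F(\hat x;x^k)\le F(x_\star)+\tfrac{L}{2}\Delta(x^k)^2$, so
\[
F_{t_k}(z_\tau;x^k)\;\le\;F(x^k)-\tau f_k+\tau\tfrac{L}{2}\Delta(x^k)^2+\tfrac{\tau^2}{2t_k}\Delta(x^k)^2.
\]
The hypothesis $\Delta(x^k)\le\lambda_s/(4L)$ combined with sharpness ($\lambda_s\Delta(x^k)\le f_k$) bounds the $L$-term by $\tau f_k/8$. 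Splitting on whether $\lambda_s t_k/(2\Delta(x^k))\le 1$: in the ``yes'' case I pick $\tau=\lambda_s t_k/(2\Delta(x^k))$, so that $\tau^2\Delta(x^k)^2/(2t_k)=\lambda_s^2 t_k/8\le \tau f_k/4$ after one more use of sharpness; in the ``no'' case I pick $\tau=1$, and $t_k>2\Delta(x^k)/\lambda_s$ gives $\Delta(x^k)^2/(2t_k)\le f_k/4$. Both regimes leave $\tau f_k(1-\tfrac18-\tfrac14)=\tfrac{5\tau}{8}f_k$ of descent, which equals $\alpha f_k$.

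For part (a), since $t_k\le 1/L$, \cref{thm:gen_weak} also yields $F(x^{k+1})\le F_{t_k}(x^{k+1};x^k)$. I then rearrange \textbf{(LAC-exact)} (implied by \textbf{(LAC)} via \cref{sufficiency_pd}) as
\[
(1+\rho_l)F_{t_k}(x^{k+1};x^k)\;\le\;\min_{z\in\mathbb{R}^n} F_{t_k}(z;x^k)+\rho_l F(x^k),
\]
substitute the central estimate, divide by $1+\rho_l$, and subtract $F(x_\star)$ to conclude.

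The main obstacle is part (b): the \textbf{(HAC-exact)} slack $\frac{\rho_h}{2t_k}\|x^{k+1}-x^k\|_2^2$ involves the unknown iterate displacement. My plan is to control it using the $1/t_k$-strong convexity of $F_{t_k}(\cdot;x^k)$ twice. First, combining strong convexity with \textbf{(HAC-exact)} gives $\|x^{k+1}-x^{k+1,\star}\|_2^2\le\rho_h\|x^{k+1}-x^k\|_2^2$; second, $\|x^{k+1,\star}-x^k\|_2^2\le 2t_k(F(x^k)-\min_z F_{t_k}(z;x^k))$. A triangle inequality plus $\|a+b\|_2^2\le 2\|a\|_2^2+2\|b\|_2^2$ then produces
\[
(1-2\rho_h)\|x^{k+1}-x^k\|_2^2\;\le\;4t_k\bigl(F(x^k)-\min_z F_{t_k}(z;x^k)\bigr),
\]
so that $F_{t_k}(x^{k+1};x^k)\le(1-\beta)\min_z F_{t_k}(z;x^k)+\beta F(x^k)$ with $\beta\triangleq 2\rho_h/(1-2\rho_h)$. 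The restriction $\rho_h<1/4$ ensures $1-\beta>0$, so the central estimate may be substituted; the coefficient simplifies to $1-\beta=1/(1+2\rho_h/(1-4\rho_h))$, giving the stated rate once $F(x^{k+1})\le F_{t_k}(x^{k+1};x^k)$ is applied.
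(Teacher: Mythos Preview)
Your proof is correct and arrives at the same conclusion as the paper, but the two routes differ in both halves of the argument.

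For the ``central estimate'' $\min_z F_{t_k}(z;x^k)\le F(x^k)-\alpha f_k$, you use a single test point $z_\tau=(1-\tau)x^k+\tau\hat x$ and pick $\tau=\min\{\lambda_s t_k/(2\Delta(x^k)),1\}$, handling both step-size regimes in one shot. The paper instead first proves the large-step case $t_k\ge 2\Delta(x^k)/\lambda_s$ by comparing directly with $\hat x$, and then derives the small-step case by a rescaling argument: it lets $\tilde t_k=2\Delta(x^k)/\lambda_s$, takes the convex combination $\bar x^{k+1}=x^k+\tfrac{t_k}{\tilde t_k}(S_{\tilde t_k}(x^k)-x^k)$, and exploits strong convexity of $F_{\tilde t_k}(\cdot;x^k)$ to show the descent scales by $t_k/\tilde t_k$. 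Your unified approach is shorter; the paper's split has the side benefit that the large-step inequality \eqref{eq_exact_a1} is reused later in \cref{lemma_path_traj}.

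For part~(b), the paper shows that \textbf{(HAC-exact)} implies \textbf{(LAC-exact)} with the specific value $\rho_l=2\rho_h/(1-4\rho_h)$: it combines strong convexity at $x^k$ and at $x^{k+1}$ to obtain $F(x^k)-F_{t_k}(x^{k+1};x^k)+2\varepsilon_{t_k}(x^{k+1};x^k)\ge\tfrac{1}{4t_k}\|x^{k+1}-x^k\|_2^2$, and then substitutes the HAC bound on $\varepsilon_{t_k}$. You instead bound $\|x^{k+1}-x^k\|_2^2$ directly via $\|x^{k+1}-x^{k+1,\star}\|_2^2\le\rho_h\|x^{k+1}-x^k\|_2^2$ and a triangle inequality, reaching $F_{t_k}(x^{k+1};x^k)\le(1-\beta)\min_z F_{t_k}(z;x^k)+\beta F(x^k)$ with $\beta=2\rho_h/(1-2\rho_h)$. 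The two reductions are algebraically different but both collapse to the same rate factor $(1-4\rho_h)/(1-2\rho_h)=1/(1+2\rho_h/(1-4\rho_h))$, so the final statement matches.
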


{Lemma \ref{lemma_traj_gen_tk} indicates that in {{\adaipl{}} with {$\cond$} stated in \Cref{alg:adaptive-IPL} set to either (\textbf{LAC}) or (\textbf{HAC})  in \eqref{eq:low-high-practical}}, if $\Delta(x^0)$ is small enough, the number of main iterations required for computing an $\epsilon-$optimal point is 
$\cO(\log(1/\epsilon))$. Specifically, consider the $t_k$ selection rule in \eqref{choice_t_diminish},
when $G>0$ is chosen sufficiently \textit{small} so that $g_H\triangleq 3 G L\norm{x_\star}_2u_H/2\leq 2/\lambda_s$, according to part (b) of \Cref{prop_alpha_and_tk} we have $t_k=g_k\Delta(x^k)\leq g_H \Delta(x^k)$ for all $k\geq 0$ with high probability. Note that $\lambda_s t_k/(2\Delta(x^k))=\lambda_s g_k/2\leq \lambda_s g_H/2\leq 1$; therefore, the contraction rate is smaller than $1-\frac{5\lambda_sg_L}{16}(1+\rho_l)^{-1}$ and $1-\frac{5\lambda_sg_L}{16}(1+\frac{2\rho_h}{1-4\rho_h})^{-1}$ for \lac{} and \hac{}, respectively, where we used the fact that $g_k\geq g_L$ for all $k\geq 0$ with high probability (see part (b) of \Cref{prop_alpha_and_tk}). Hence, the number of main iterations for both \lac{} and \hac{} is $\cO(\frac{1}{\lambda_s g_L}\log \frac{1}{\epsilon})$; and in terms of $\epsilon$ and $G$, we get $\cO(\frac{1}{G}\log(\frac{1}{\epsilon}))$.
On the other hand, when $G>0$ is chosen sufficiently \textit{large} so that $g_L \triangleq G\lambda_s u_L> 2/\lambda_s$, the number of main iterations for both \lac{} and \hac{} will become $\cO(\log \frac{1}{\epsilon})$ without $G$ explicitly appearing in the denominator. Indeed, when $g_L>2/\lambda_s$, the discussion above implies that {$\lambda_s t_k/(2\Delta(x^k))\geq 1$} for all $k\geq 0$; therefore, the contraction rate is smaller than $1-\frac{5}{8}(1+\rho_l)^{-1}$ and $1-\frac{5}{8}(1+\frac{2\rho_h}{1-4\rho_h})^{-1}$ for \lac{} and \hac{}, respectively. This implies that the $\cO(1)$ constant for the outer iteration complexity does not explicitly depend on $t_k$; hence, $G$ does not appear in the bound.} {That said, the above complexity bound for \hac{} case is not tight. Indeed,  
for \hac{} whenever $G$ is selected \textit{large} so that $g_L\geq 2/\lambda_s$, we next establish in Lemma \ref{lemma_path_traj_high} that Algorithm \ref{alg:adaptive-IPL} has a better complexity bound than $\cO(\log(\frac{1}{\epsilon}))$ implied by Lemma \ref{lemma_traj_gen_tk}(b).
In the result below, we provide one-step analysis of \adaipl{} when $t_k\in (2\Delta(x^k)/\lambda_s,L^{-1}]$ and $\Delta(x^k)$ is small\tcb{---}for this result, similar to the above discussion, we also do not assume $t_k$ being selected based on \eqref{choice_t_diminish}.}

\begin{lemma}\label{lemma_path_traj_high}
{Under Assumption~\ref{ass:sharpness},} for any $k\in\mathbb{N}$, if \hac{} in \eqref{eq:low-high-practical} holds with $\rho_h\in [0,1/4)$ and  $\Delta(x^k)\leq \min\{\|x_\star\|_2/\sqrt{M_0},\lambda_s/(4L)\}$, then 
\begin{equation*}
\lambda_s\Delta(x^{k+1})\leq \left(4\rho_h/t_k+4L\right)\Delta^2(x^{k}),\quad \forall~t_k\in[2\Delta(x^k)/\lambda_s,~L^{-1}],
\end{equation*}%
where { $M_0 \triangleq \left(2+\sqrt{2}\rho_h^{3/4}/(1-2\rho_h^{1/2})\right)/(1-\sqrt{2}\rho_h^{1/4})$}.
\end{lemma}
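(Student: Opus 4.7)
The plan is to combine the HAC inexactness condition with the standard quadratic-convergence analysis of the proximal-linear subproblem, using weak convexity and sharpness as the glue.

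First, I will exploit HAC together with the $1/t_k$-strong convexity of $F_{t_k}(\cdot;x^k)$ (the convex map $F(\cdot;x^k)$ augmented by a $\tfrac{1}{2t_k}\|\cdot-x^k\|_2^2$ term). The chain
$$\tfrac{1}{2t_k}\|x^{k+1}-\bar x^{k+1}\|_2^2\leq F_{t_k}(x^{k+1};x^k)-F_{t_k}(\bar x^{k+1};x^k)\leq \tfrac{\rho_h}{2t_k}\|x^{k+1}-x^k\|_2^2$$
yields $\|x^{k+1}-\bar x^{k+1}\|_2\leq\sqrt{\rho_h}\,\|x^{k+1}-x^k\|_2$, where $\bar x^{k+1}\triangleq\arg\min_z F_{t_k}(z;x^k)$. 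This is the sole way inexactness enters the subsequent estimates.

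Second, starting from $F(x^{k+1})\leq F(x^{k+1};x^k)+\tfrac{L}{2}\|x^{k+1}-x^k\|_2^2$ (Lemma~\ref{thm:gen_weak}), rewriting through $F_{t_k}$, invoking HAC to dominate $F_{t_k}(x^{k+1};x^k)$ by $F_{t_k}(\hat x;x^k)+\tfrac{\rho_h}{2t_k}\|x^{k+1}-x^k\|_2^2$ with $\hat x\in\{x_\star,-x_\star\}$ closest to $x^k$, and upper-bounding $F(\hat x;x^k)\leq F(x_\star)+\tfrac{L}{2}\Delta^2(x^k)$ again by weak convexity produces the step-wise bound
$$F(x^{k+1})-F(x_\star)\leq \Big(\tfrac{L}{2}+\tfrac{1}{2t_k}\Big)\Delta^2(x^k)+\Big(\tfrac{L}{2}-\tfrac{1-\rho_h}{2t_k}\Big)\|x^{k+1}-x^k\|_2^2.$$
The sharpness assumption then converts the left-hand side into $\lambda_s\Delta(x^{k+1})$.

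Third, and this is the crux, I will establish $\|x^{k+1}-x^k\|_2^2\leq M_0\Delta^2(x^k)$. Splitting $x^{k+1}-x^k=(x^{k+1}-\bar x^{k+1})+(\bar x^{k+1}-x^k)$ and applying the Young inequality $(a+b)^2\leq(1+\lambda)a^2+(1+\tfrac{1}{\lambda})b^2$ with $\lambda$ chosen so that $(1+\lambda)\rho_h=\sqrt{2}\rho_h^{3/4}$ (i.e., $1+\lambda=\sqrt{2}\rho_h^{-1/4}$) absorbs the $\rho_h\|x^{k+1}-x^k\|_2^2$ factor from step~1 into the left-hand side and generates exactly the denominator $1-\sqrt{2}\rho_h^{1/4}$ that appears in $M_0$. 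The remaining factor $\|\bar x^{k+1}-x^k\|_2^2$ is handled by the triangle inequality $\|\bar x^{k+1}-x^k\|_2\leq \Delta(\bar x^{k+1})+\Delta(x^k)$ together with the exact-PL quadratic estimate $\lambda_s\Delta(\bar x^{k+1})\leq(\tfrac{L}{2}+\tfrac{1}{2t_k})\Delta^2(x^k)$ (obtained by comparing $F_{t_k}(\bar x^{k+1};x^k)\leq F_{t_k}(\hat x;x^k)$ and chaining weak convexity with sharpness). The assumption $\Delta(x^k)\leq \min\{\|x_\star\|_2/\sqrt{M_0},\lambda_s/(4L)\}$ plays a dual role: the $\lambda_s/(4L)$ bound together with $t_k\geq 2\Delta(x^k)/\lambda_s$ controls $\Delta(\bar x^{k+1})$ from above by a fraction of $\Delta(x^k)$, while the $\|x_\star\|_2/\sqrt{M_0}$ bound ensures $\hat x$ remains the nearest of $\{x_\star,-x_\star\}$ throughout, justifying the triangle identity. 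Inserting these into the Young estimate, the numerator $2+\sqrt{2}\rho_h^{3/4}/(1-2\sqrt{\rho_h})$ of $M_0$ emerges from the combination $2(\Delta^2(\bar x^{k+1})+\Delta^2(x^k))$ plus the inexactness cross-term that was parked on the right-hand side.

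Finally, substituting $\|x^{k+1}-x^k\|_2^2\leq M_0\Delta^2(x^k)$ into the step-wise bound from step~2 and collecting like $\Delta^2(x^k)$ terms should consolidate to $\lambda_s\Delta(x^{k+1})\leq (4\rho_h/t_k+4L)\Delta^2(x^k)$. The main obstacle will be step~3: the choice of Young parameter that makes $\rho_h^{1/4}$ and $\rho_h^{3/4}$ appear cleanly, and verifying that the intricate algebra at the end collapses to exactly the claimed coefficient. Careful bookkeeping is also required at the regime $t_k>(1-\rho_h)/L$, where the sign of $\tfrac{L}{2}-\tfrac{1-\rho_h}{2t_k}$ flips and the $M_0$-upper-bound is indispensable, versus $t_k\leq(1-\rho_h)/L$, where this coefficient is nonpositive and a different simplification path (absorbing $1/(2t_k)$ into $4\rho_h/t_k+4L$ via $\rho_h<1/4$) has to be used.
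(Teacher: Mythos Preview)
There is a genuine gap in the consolidation step. Consider $\rho_h=0$ (which the lemma explicitly allows). Your step~2 bound, after dropping the nonpositive term $\big(\tfrac{L}{2}-\tfrac{1-\rho_h}{2t_k}\big)\|x^{k+1}-x^k\|_2^2$, reads $\lambda_s\Delta(x^{k+1})\le\big(\tfrac{L}{2}+\tfrac{1}{2t_k}\big)\Delta^2(x^k)$, while the target is $\lambda_s\Delta(x^{k+1})\le 4L\,\Delta^2(x^k)$. The former does \emph{not} imply the latter: for $t_k$ close to the lower endpoint $2\Delta(x^k)/\lambda_s$ one has $\tfrac{1}{2t_k}=\tfrac{\lambda_s}{4\Delta(x^k)}$, which can be arbitrarily larger than $4L$. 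More generally, ``absorbing $1/(2t_k)$ into $4\rho_h/t_k+4L$'' requires $\tfrac{1-8\rho_h}{2t_k}\le \tfrac{7L}{2}$, which fails for every $\rho_h<1/8$ once $t_k$ is small enough. So dropping the negative term is fatal; the $1/(2t_k)$ contribution has no counterpart on the right.

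The paper's proof does not drop that term. It uses the auxiliary Lemma~\ref{lemma_same_closeness} (this is where $M_0$ actually enters: the hypothesis $\Delta(x^k)\le\|x_\star\|_2/\sqrt{M_0}$ guarantees $x^{k+1}$ is closest to the \emph{same} signal $\hat x$ as $x^k$) so that one may write $\|x^{k+1}-x^k\|_2=\|(x^{k+1}-\hat x)-(x^k-\hat x)\|_2$. Then the positive part $\tfrac{L}{2}\|x^{k+1}-x^k\|_2^2$ is bounded above via the triangle inequality, while the negative part $-\tfrac{1-2\rho_h}{2t_k}\|x^{k+1}-x^k\|_2^2$ is bounded above via the \emph{reverse} triangle inequality, yielding $-\tfrac{1-2\rho_h}{2t_k}(\Delta(x^k)-\Delta(x^{k+1}))^2$. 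Expanding, the resulting $-\tfrac{1-2\rho_h}{2t_k}\Delta^2(x^k)$ cancels almost all of the $\tfrac{1}{2t_k}\Delta^2(x^k)$, leaving exactly the $\tfrac{\rho_h}{t_k}\Delta^2(x^k)$ that you need; what remains is a quadratic in $\Delta(x^{k+1})$ whose coefficients are controlled by the hypotheses. Your step~3 aims at the wrong target: $M_0$ is not the constant in an inequality $\|x^{k+1}-x^k\|_2^2\le M_0\Delta^2(x^k)$ (and the Young-parameter computation you sketch gives $1-\sqrt2\rho_h^{3/4}$ on the left, not $1-\sqrt2\rho_h^{1/4}$). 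Redirect step~3 to proving that $x^k$ and $x^{k+1}$ share the same nearest signal; then rework step~4 by expanding $\|x^{k+1}-x^k\|_2^2$ in terms of $\Delta(x^k)$ and $\Delta(x^{k+1})$ as above rather than bounding it by $\Delta^2(x^k)$ alone.
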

{Lemma \ref{lemma_path_traj_high} shows that when $\Delta(x^0)$ is small enough, {\adaipl{} with {$\cond$} set to (\textbf{HAC}) in \eqref{eq:low-high-practical}} shows linear or super-linear convergence in terms of main iterations. Specifically, {consider the $t_k$ selection rule in \eqref{choice_t_diminish},
when $G>0$ is chosen sufficiently \textit{large} such that $g_L> 2/\lambda_s$}, the number of main iterations is $\cO\Big(\log(\frac{1}{\epsilon})/\log (g_L\lambda_s)\Big)$; thus, in terms of $\epsilon$ and $G$, we get $\cO(\log(\frac{1}{\epsilon})/\log(G))$.
In the extreme situation with $G = \infty$, we get $t_k = L^{-1}$ and choosing constant step size $1/L$ corresponds to the \ipl{} algorithm in~\cite{zheng2023new}, of which convergence rate in terms of main iterations $(k)$ is quadratic.}
\subsection{Subproblem Solvers and 
{Computational} Complexity}\label{subsec:subproblem_solvers}
In this subsection, we introduce a class of algorithms for {\textit{inexactly} solving the \adaipl{} subproblem in \eqref{PL} and their 
computational complexity.} We 
consider {two alternative 
solvers that 
can guarantee the suboptimality $H_k(z^{k}_j) - D_k(\lambda^{k}_j)=\cO(1/j^2)$ for all $k\geq 0$. The first one is \cite[Algorithm 1]{tseng2008accelerated}, 
and throughout this paper, we refer to it as the Accelerated Proximal Gradient (\apg{}) method. The second one 
is the {Accelerated Primal-Dual}
(\apd{}) algorithm introduced in \cite[Algorithm 4]{chambolle2016ergodic} and \cite[Algorithm 2.2]{hamedani2021primal}. In the following result, we state the complexity result for both methods when applied to the $k$-th \adaipl{} subproblem in \eqref{PL-rewrite-2}.} 
\begin{theorem}[Corollary 1(b) in \cite{tseng2008accelerated} and Theorem 2.2 in \cite{hamedani2021primal}]
\label{rate_subproblem_updated}
For any $k\geq 0$, consider either 
\apg{} or \apd{} applied to \eqref{PL-rewrite-2}. Let $\{(z^{k}_j,\lambda^{k}_j)\}_{j=0}^\infty$ be the iterate sequence generated. Then, $\sup_{j\in\mathbb{N}}\|\lambda^{k}_j\|_\infty\leq 1$ and there exists a  constant\footnote{{$C_0$ is dimension-free and does not depend on any problem or algorithm parameters.}} 
$C_0\geq 2$ such that
\begin{equation}\label{ineq_sub_rate}
{
\begin{aligned}
	H_k(z^{k}_j) - D_k(\lambda^{k}_j)\leq \frac{t_kC_0m\|B_k\|_2^2}{(j+1)^2},\quad \forall j\in\mathbb{N}_+.
	\end{aligned}}%
\end{equation}
\end{theorem}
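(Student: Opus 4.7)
The plan is to specialize the cited convergence results of \apg{} (Tseng~\cite{tseng2008accelerated}) and \apd{} (Hamedani--Aybat~\cite{hamedani2021primal}) to the specific structure of the subproblem in \eqref{PL-rewrite-2}. The main observations are: (i) $H_k$ is $1/t_k$-strongly convex, (ii) the dual objective $D_k$ in \eqref{sub_dual} has gradient $\nabla D_k(\lambda)=-t_k B_k B_k^\top \lambda - d_k$ which is Lipschitz continuous with constant $L_D\triangleq t_k\|B_k\|_2^2$, and (iii) the dual feasible set $\{\lambda\in\reals^m:\|\lambda\|_\infty\leq 1\}$ is bounded with $\ell_2$-diameter at most $2\sqrt{m}$. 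These are precisely the ingredients needed to invoke the cited results.

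For the boundedness claim, I would initialize $\lambda^k_0=\mathbf{0}$ so that $\|\lambda^k_0\|_\infty\leq 1$ trivially. Both \apg{} and \apd{}, when applied to \eqref{PL-rewrite-2}, involve a projection onto the dual feasible set $\{\lambda:\|\lambda\|_\infty\leq 1\}$ at every iteration (in \apg{} this appears as the proximal step for the indicator of the dual constraint, and in \apd{} as the projection in the dual update); hence, the property $\|\lambda^k_j\|_\infty\leq 1$ propagates inductively for all $j\in\mathbb{N}$.

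For the rate claim under \apd{}, I would directly quote \cite[Theorem 2.2]{hamedani2021primal}: the gap $H_k(z^k_j)-D_k(\lambda^k_j)$ is bounded by a $\cO(1/(j+1)^2)$ term whose numerator is proportional to $\|B_k\|_2^2$ times the squared dual feasible-set diameter and the primal proximal parameter, which combines to give exactly $C_0 t_k m\|B_k\|_2^2$ for an absolute constant $C_0\geq 2$. For the \apg{} route, the argument is slightly more delicate: Tseng's Corollary 1(b) bounds the dual suboptimality $\max_\lambda D_k(\lambda)-D_k(\lambda^k_j)$ by $\tfrac{2L_D\|\lambda^k_0-\lambda^\ast\|_2^2}{(j+1)^2}\leq \tfrac{8t_k\|B_k\|_2^2\, m}{(j+1)^2}$, where I used $\|\lambda^k_0-\lambda^\ast\|_2^2\leq m(\|\lambda^k_0\|_\infty+\|\lambda^\ast\|_\infty)^2\leq 4m$. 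To turn this dual suboptimality into a primal-dual gap, one constructs the shadow primal iterate $z^k_j=-t_k B_k^\top \lambda^k_j$ (which is the unconstrained minimizer of the Lagrangian in $z$ at $\lambda^k_j$) and uses strong convexity of $H_k(\cdot,\lambda^k_j)$ in $z$ together with strong duality to show that $H_k(z^k_j)-D_k(\lambda^k_j)$ is of the same order as the dual suboptimality. Absorbing the constants yields the desired bound with an absolute constant $C_0\geq 2$.

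The main obstacle I anticipate is the last step for the \apg{} case: passing from the dual suboptimality guaranteed by Tseng's result to a genuine \emph{primal-dual} gap expressed in terms of $H_k(z^k_j)$ and $D_k(\lambda^k_j)$ with the clean form stated in \eqref{ineq_sub_rate}. This requires carefully choosing which primal sequence $\{z^k_j\}$ to report (the Lagrangian minimizer at $\lambda^k_j$ versus an ergodic or extrapolated primal average), and verifying that strong convexity of $H_k$ together with the $L_D$-smoothness of $-D_k$ yields matching constants so that the combined bound remains $\cO(t_k m\|B_k\|_2^2/(j+1)^2)$. Once this bookkeeping is done, both methods give \eqref{ineq_sub_rate} with the same universal constant $C_0$, completing the proof.
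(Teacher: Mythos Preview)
The paper does not actually supply a proof for this theorem: it is stated as a direct quotation of results from Tseng~\cite{tseng2008accelerated} and Hamedani--Aybat~\cite{hamedani2021primal} (note the citations embedded in the theorem heading), and the text moves on immediately to \cref{bound_b2} without any argument. In other words, the paper's ``proof'' is simply the invocation of those references. Your proposal therefore goes further than the paper by sketching how the specific bound \eqref{ineq_sub_rate} would be extracted from the cited statements.

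Your sketch is broadly reasonable: the identification of $L_D=t_k\|B_k\|_2^2$, the dual diameter bound $\|\lambda_0^k-\lambda^\ast\|_2^2\leq 4m$, and the projection-based argument for $\sup_j\|\lambda^k_j\|_\infty\leq 1$ are all correct. The one genuine technical point you flag---converting the \apg{} dual suboptimality bound into a primal--dual gap $H_k(z^k_j)-D_k(\lambda^k_j)$ with the shadow primal $z^k_j=-t_kB_k^\top\lambda^k_j$---is real and not automatic: strong convexity of $H_k$ alone does not give $H_k(z^k_j)-p^\ast\lesssim d^\ast-D_k(\lambda^k_j)$ without an additional argument (e.g., relating $\|z^k_j-z^\ast\|_2$ to $\|\nabla D_k(\lambda^k_j)-\nabla D_k(\lambda^\ast)\|_2$ and then to the dual gap via smoothness, which may cost an extra factor). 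Since the paper offloads all of this to the cited works, there is nothing in the paper against which to check your resolution of this step; you would need to go back to \cite{tseng2008accelerated} and verify which primal iterate Tseng's Corollary~1(b) actually controls, or else carry out the bookkeeping yourself.
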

{To the best of our knowledge, for solving 
\eqref{PL-rewrite-2} when $B_k$ matrix is arbitrary, $\cO(1/j^2)$ is the best rate we can get on the duality gap of the primal-dual iterate sequence.} 

The next result shows that $\|B_k\|_2$ on the r.h.s. of \eqref{ineq_sub_rate} can be uniformly bounded.
\begin{lemma}[Lemma 8 in \cite{zheng2023new}]
\label{bound_b2}
If $\sup_{k\in\mathbb{N}}\Delta(x^k)\leq r$, then
\begin{equation*}
\sup_{k\in\mathbb{N}} \|B_k\|_2\leq B(r) \triangleq \frac{2}{m}\|A\|_2(\|x_\star\|_2+r)\max_{i\in [m]}\|a_i\|_2.
\end{equation*}
\end{lemma}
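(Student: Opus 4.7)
The plan is to bound $\|B_k\|_2$ uniformly by expanding the definition $B_k = \tfrac{2}{m}\mbox{diag}(Ax^k)A$ and controlling the diagonal weights via a crude maximum extraction. Concretely, for an arbitrary unit vector $v\in\mathbb{S}^{n-1}$, I would write
\[
\|B_k v\|_2^2 = \frac{4}{m^2}\sum_{i=1}^m \fprod{a_i,x^k}^2\fprod{a_i,v}^2 \leq \frac{4}{m^2}\max_{i\in[m]}\fprod{a_i,x^k}^2\cdot \sum_{i=1}^m \fprod{a_i,v}^2,
\]
so that $\|B_k v\|_2 \leq \tfrac{2}{m}\bigl(\max_i|\fprod{a_i,x^k}|\bigr)\|Av\|_2$. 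Using Cauchy--Schwarz to get $\max_i |\fprod{a_i,x^k}| \leq \max_i\|a_i\|_2\|x^k\|_2$ and the operator-norm bound $\|Av\|_2\leq \|A\|_2$ then yields $\|B_k\|_2 \leq \tfrac{2}{m}\max_i\|a_i\|_2\,\|x^k\|_2\,\|A\|_2$.

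It remains to bound $\|x^k\|_2$ uniformly in $k$. Since $\Delta(x^k) = \min\{\|x^k-x_\star\|_2,\|x^k+x_\star\|_2\}\leq r$, at least one of the two quantities is at most $r$; using $\|-x_\star\|_2=\|x_\star\|_2$, the triangle inequality gives $\|x^k\|_2\leq \|x_\star\|_2 + r$ in either case. Substituting this into the previous estimate produces the claimed bound $B(r)$, and taking supremum over $k$ completes the argument. There is no real obstacle here: once the diagonal factor is peeled off by the maximum trick and the iterate norm is controlled by the symmetry of $\Delta(\cdot)$, the result follows by straightforward composition of bounds.
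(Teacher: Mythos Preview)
Your proof is correct. The paper does not supply its own proof of this lemma but cites it from \cite{zheng2023new}; your argument---pulling out $\max_i|\fprod{a_i,x^k}|$ from the diagonal factor, bounding it by $\max_i\|a_i\|_2\,\|x^k\|_2$, and controlling $\|x^k\|_2$ via the triangle inequality and the definition of $\Delta(\cdot)$---is the natural one and matches what one would expect the cited reference to contain.
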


{In the rest, we discuss the 
complexity of the \adaipl{} 
when \eqref{PL-rewrite-2} is solved inexactly by some algorithm 
that can guarantee \eqref{ineq_sub_rate} and terminated according to \eqref{eq:low-high-practical}.} {For any given $\epsilon>0$, let $K_\epsilon$ denote the number of main (outer) iterations 
required by \adaipl{} to 
compute an $\epsilon$-optimal solution,} i.e., 
$K_\epsilon \triangleq \inf\{k\in\mathbb{N}_+:\Delta(x^k)\leq \epsilon\}$.
{Given $x^k$ for any $k\geq 0$, let $N_k\in\mathbb{N}_+$ denote the number of iterations required by the solver, i.e., inner iterations of \adaipl{}, 
to inexactly solve the $k$-th subproblem\tcb{---}in order to compute $x^{k+1}$ within the $k$-th outer iteration of \adaipl{}.} For {\lac{} in \eqref{eq:low-high-practical}}, $N_k = \inf\{j\in\mathbb{N}:H_k(z^k_j) - D_k(\lambda^k_j)\leq \rho_l(H_k(\mathbf{0}) - H_k(z^k_j))\}$. For {\hac{} in \eqref{eq:low-high-practical}}, $N_k = \inf\{j\in\mathbb{N}:H_k(z^k_j) - D_k(\lambda^k_j)\leq \rho_h\|z^k_j\|_2^2/(2t_k)\}$. {Therefore,} the overall complexity of \adaipl{} for 
{computing} an $\epsilon$-optimal solution is thus given by 
{
$N(\epsilon) {\triangleq} \sum_{k=0}^{K_\epsilon - 1}N_k.$}%

Next, we provide an upper bound for $N_k$; similar to Lemmas \ref{lemma_traj_gen_tk} and \ref{lemma_path_traj_high}, we do not assume $t_k$ being selected based on \eqref{choice_t_diminish}, {instead we show the result for any $t_k\in(0, 1/L]$. First, in Lemma \ref{iteration_low_sub}, we give a bound under \lac{} in \eqref{eq:low-high-practical}.}
\begin{lemma}[$N_k$ bound under \lac{}]\label{iteration_low_sub}
Suppose 
Assumption~\ref{ass:sharpness} holds, and a subproblem solver satisfying
and \eqref{ineq_sub_rate} for some constant $C_0\geq 2$ is given. For any $k\in\mathbb{N}$, if $\Delta(x^k)\leq \lambda_s/(4L)$ and $0< t_k\leq L^{-1}$, then for any $\rho_l>0$, \lac{} in~\eqref{eq:low-high-practical} holds within
{
\begin{equation*}
N_k\leq M_1\sqrt{\frac{t_k}{\lambda_s\Delta(x^k)\min\{1,\lambda_st_k/(2\Delta(x^k))\}}}\cdot\sqrt{\frac{1+\rho_l}{\rho_l}}{\triangleq M^{\bf LAC}_k},
\end{equation*}}%
inner iterations in which $M_1 \triangleq \sqrt{{1.6} C_0mB^2\big(\lambda_s/(4L)\big)}$ and $B(\cdot)$ is defined in \Cref{bound_b2}.
\end{lemma}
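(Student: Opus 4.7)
The plan is to reduce the practical \textbf{(LAC)} stopping criterion to a sufficient condition on the subproblem duality gap, then invoke the $\mathcal{O}(1/j^2)$ rate of Theorem~\ref{rate_subproblem_updated} to upper-bound the number of inner iterations needed. The nontrivial input is an \emph{a priori} lower bound on the ideal subproblem progress $\Delta_0\triangleq H_k(0)-\min_z H_k(z) = F(x^k)-\min_x F_{t_k}(x;x^k)$.

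Let $\mathrm{gap}_j\triangleq H_k(z^k_j)-D_k(\lambda^k_j)$. Weak duality gives $H_k(z^k_j)-\min_z H_k(z)\leq \mathrm{gap}_j$, hence $H_k(0)-H_k(z^k_j)=\Delta_0-(H_k(z^k_j)-\min_z H_k(z))\geq \Delta_0-\mathrm{gap}_j$. Therefore $\mathrm{gap}_j\leq \tfrac{\rho_l}{1+\rho_l}\Delta_0$ is sufficient for \textbf{(LAC)}. Combining this with $\mathrm{gap}_j\leq t_k C_0 m\|B_k\|_2^2/(j+1)^2$ from Theorem~\ref{rate_subproblem_updated} and $\|B_k\|_2\leq B(\lambda_s/(4L))$ from Lemma~\ref{bound_b2} (applicable since $\Delta(x^k)\leq\lambda_s/(4L)$), it suffices to pick the smallest $j$ with $(j+1)^2\geq t_k\, C_0\, m\, B^2(\lambda_s/(4L))(1+\rho_l)/(\rho_l\,\Delta_0)$.

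To lower-bound $\Delta_0$, I would evaluate $F_{t_k}(\cdot;x^k)$ along the segment $x_\tau\triangleq x^k+\tau(\hat x-x^k)$, where $\hat x\in\{x_\star,-x_\star\}$ is chosen so that $\|x^k-\hat x\|_2=\Delta(x^k)$. Convexity of the linearized objective $F(\cdot;x^k)$ combined with the weak-convexity estimate $F(\hat x;x^k)\leq F(\hat x)+\tfrac{L}{2}\Delta(x^k)^2$ (Lemma~\ref{thm:gen_weak}) yields, for every $\tau\in[0,1]$,
\begin{equation*}
F_{t_k}(x_\tau;x^k)\leq F(x^k)-\tau\bigl(F(x^k)-F(x_\star)-\tfrac{L}{2}\Delta(x^k)^2\bigr)+\tfrac{\tau^2\Delta(x^k)^2}{2t_k}.
\end{equation*}
Since $\Delta(x^k)\leq\lambda_s/(4L)$, sharpness~\eqref{ineq:sharpness} gives $\tfrac{L}{2}\Delta(x^k)^2\leq \tfrac{1}{8}(F(x^k)-F(x_\star))$, so the coefficient of $-\tau$ above is at least $\tfrac{7}{8}(F(x^k)-F(x_\star))\geq \tfrac{7}{8}\lambda_s\Delta(x^k)$.

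A short dichotomy on $\tau$ then recovers the min-structure. If $\lambda_s t_k\leq 2\Delta(x^k)$, take $\tau=\lambda_s t_k/(2\Delta(x^k))\in(0,1]$: the linear contribution is at least $\tfrac{7}{16}\lambda_s^2 t_k$ while the quadratic term equals $\tfrac{\lambda_s^2 t_k}{8}$, yielding $\Delta_0\geq \tfrac{5}{16}\lambda_s^2 t_k = \tfrac{5}{8}\lambda_s\Delta(x^k)\cdot\lambda_s t_k/(2\Delta(x^k))$. Otherwise $\tau=1$: the linear term is at least $\tfrac{7}{8}\lambda_s\Delta(x^k)$ while $\Delta(x^k)^2/(2t_k)\leq \lambda_s\Delta(x^k)/4$ (using $t_k> 2\Delta(x^k)/\lambda_s$), yielding $\Delta_0\geq \tfrac{5}{8}\lambda_s\Delta(x^k)$. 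In both cases $\Delta_0\geq \tfrac{5}{8}\lambda_s\Delta(x^k)\min\{1,\lambda_s t_k/(2\Delta(x^k))\}$; plugging this into the sufficient condition of the second paragraph and absorbing the factor $1/(5/8)=8/5=1.6$ into $M_1^2$ produces the claimed bound on $N_k$. The main obstacle is really identifying the correct test point and the two-case split on $\tau$; the remaining steps are routine.
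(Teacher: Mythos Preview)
Your proof is correct and follows the same overall architecture as the paper: reduce \textbf{(LAC)} to the gap condition $\mathrm{gap}_j\leq \tfrac{\rho_l}{1+\rho_l}\Delta_0$, invoke the $\mathcal{O}(1/j^2)$ rate with $\|B_k\|_2\leq B(\lambda_s/(4L))$, and plug in the lower bound $\Delta_0\geq \tfrac{5}{8}\lambda_s\Delta(x^k)\min\{1,\lambda_s t_k/(2\Delta(x^k))\}$.

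The one genuine difference is how you obtain that lower bound on $\Delta_0$. The paper cites \eqref{eq_update_F} from Lemma~\ref{lemma_path_traj}, which in turn unwinds to the two-part argument in the proof of Lemma~\ref{lemma_traj_gen_tk}: for $t_k\geq 2\Delta(x^k)/\lambda_s$ it evaluates $F_{t_k}(\cdot;x^k)$ at $x_\star$ (this is your $\tau=1$ case), but for $t_k<2\Delta(x^k)/\lambda_s$ it interpolates toward the \emph{minimizer} $S_{t'}(x^k)$ at the reference step $t'=2\Delta(x^k)/\lambda_s$ and uses strong convexity of $F_{t'}(\cdot;x^k)$ to scale the $5/8$ bound down by the ratio $t_k/t'$. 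Your argument instead interpolates directly toward $\hat x$ for all $t_k$, optimizing over $\tau\in(0,1]$; this is more self-contained (no auxiliary minimizer at a second step size) and recovers the identical constant $5/8$. The paper's route is more modular because the intermediate inequality \eqref{eq_proof_eq_low_decrease} is reused elsewhere; your route is shorter if one only cares about this lemma.
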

{Second, in Lemma~\ref{iteration_high_sub}, we give another bound considering \hac{} in \eqref{eq:low-high-practical}.}
\begin{lemma}[$N_k$ bound under \hac{}]\label{iteration_high_sub}
{Under the premise of Lemma~\ref{iteration_low_sub}, for any $\rho_h\in (0,1/4)$, \hac{} in~\eqref{eq:low-high-practical} holds within} 
\begin{equation*}
N_k\leq M_2\frac{t_k}{\Delta(x^k)\min\{1,\lambda_st_k/(2\Delta(x^k))\}}\sqrt{\frac{1+\rho_h}{\rho_h}}{\triangleq M^{\bf HAC}_k},
\end{equation*}%
{inner iterations} in which $M_2 \triangleq \sqrt{16C_0mB^2(\lambda_s/(4L))}$. 
\end{lemma}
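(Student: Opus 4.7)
\textbf{Proof plan for Lemma \ref{iteration_high_sub}.}

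The plan is to combine the $\mathcal{O}(1/j^2)$ convergence rate of the primal--dual gap from Theorem \ref{rate_subproblem_updated} with a suitable lower bound on $\|z^k_j\|_2$, valid for $j$ large enough, to certify the first index $j$ at which \hac{} in \eqref{eq:low-high-practical} is triggered. Throughout, let $z^k_\star \triangleq \argmin_{z\in\mathbb{R}^n} H_k(z)$ denote the exact minimizer of the $k$-th subproblem.

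First I would control the distance to the subproblem minimizer. Since $H_k$ is $(1/t_k)$-strongly convex, weak duality gives
\[
\|z^k_j - z^k_\star\|_2^2 \;\leq\; 2t_k\bigl(H_k(z^k_j) - H_k(z^k_\star)\bigr) \;\leq\; 2t_k\bigl(H_k(z^k_j) - D_k(\lambda^k_j)\bigr),
\]
which combined with \eqref{ineq_sub_rate} and Lemma \ref{bound_b2} (applied with $r=\lambda_s/(4L)$, admissible because $\Delta(x^k)\leq \lambda_s/(4L)$) yields $\|z^k_j - z^k_\star\|_2 \leq t_k\sqrt{2C_0 m}\,B(\lambda_s/(4L))/(j+1)$. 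Next I would establish the lower bound
\[
\|z^k_\star\|_2 \;\gtrsim\; \Delta(x^k)\,\min\{1,\ \lambda_s t_k/(2\Delta(x^k))\}
\]
by a two-regime analysis. When $t_k \leq 2\Delta(x^k)/\lambda_s$, the proximal step is small and $z^k_\star$ essentially scales like a subgradient step of length $t_k$ times a subgradient whose norm is at least $\lambda_s$ by the sharpness in Assumption \ref{ass:sharpness}; this forces $\|z^k_\star\|_2 \gtrsim \lambda_s t_k/2$. When $t_k > 2\Delta(x^k)/\lambda_s$, the proximal regularizer is weak enough that the one-step analysis behind Lemma \ref{lemma_traj_gen_tk}(b) forces $x^k+z^k_\star$ to make substantial progress towards the signed copy of $x_\star$, giving $\|z^k_\star\|_2 \gtrsim \Delta(x^k)$.

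With both estimates in place, once $j+1 \geq 2t_k\sqrt{2C_0 m}\,B(\lambda_s/(4L))/\|z^k_\star\|_2$, the triangle inequality gives $\|z^k_j\|_2 \geq \tfrac{1}{2}\|z^k_\star\|_2$, so \hac{} is implied by
\[
\frac{t_k C_0 m\|B_k\|_2^2}{(j+1)^2} \;\leq\; \frac{\rho_h}{8t_k}\|z^k_\star\|_2^2,
\]
i.e.\ $(j+1)^2 \geq 8 t_k^2 C_0 m \|B_k\|_2^2/(\rho_h\|z^k_\star\|_2^2)$. Taking the larger of this threshold and the one coming from Step 1, substituting the lower bound on $\|z^k_\star\|_2$ from the preceding step, and bounding $\|B_k\|_2\leq B(\lambda_s/(4L))$ via Lemma \ref{bound_b2} produces the stated expression for $M_k^{\bf HAC}$ with $M_2 = \sqrt{16 C_0 m\, B^2(\lambda_s/(4L))}$. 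The factor $\sqrt{(1+\rho_h)/\rho_h}$ rather than $1/\sqrt{\rho_h}$ arises from taking the worse of the two thresholds and absorbing a constant that reflects how close $z^k_j$ needs to be to $z^k_\star$ relative to $\|z^k_\star\|_2$.

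\emph{Main obstacle.} The delicate ingredient is the lower bound $\|z^k_\star\|_2 \gtrsim \Delta(x^k)\min\{1,\lambda_s t_k/(2\Delta(x^k))\}$. Capturing the correct crossover between the small- and large-step regimes requires a careful use of the optimality conditions of \eqref{PL-rewrite-2} together with sharpness and weak convexity of $F$; in particular, in the small-step regime the proximal term actively restricts $z^k_\star$, and establishing a subgradient-norm lower bound of order $\lambda_s$ inside the computed step is what forces the linear dependence on $t_k$ in the final bound.
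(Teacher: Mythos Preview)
Your overall architecture matches the paper's: combine the $\mathcal{O}(1/j^2)$ gap bound \eqref{ineq_sub_rate} with strong convexity of $H_k$ to control $\|z^k_j - z^k_\star\|_2$, invoke a lower bound on $\|z^k_\star\|_2$, and use a triangle-type inequality to lower-bound $\|z^k_j\|_2$. The paper organizes this with a single threshold chosen so that the gap is at most $\frac{\rho_h}{1+\rho_h}\cdot\frac{1}{4t_k}\|z^k_\star\|_2^2$; this simultaneously yields $\|z^k_j - z^k_\star\|_2^2 \leq \frac{\rho_h}{2(1+\rho_h)}\|z^k_\star\|_2^2$ and, via $\|z^k_j\|_2^2 \geq \tfrac{1}{2}\|z^k_\star\|_2^2 - \|z^k_j - z^k_\star\|_2^2$, gives \hac{} directly. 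That is where the factor $\sqrt{(1+\rho_h)/\rho_h}$ comes from, not from taking the maximum of two separate thresholds as you suggest.

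The substantive difference is how the lower bound $\|z^k_\star\|_2 \geq \tfrac{1}{2}\Delta(x^k)\min\{1,\lambda_s t_k/(2\Delta(x^k))\}$ is obtained. You propose a subgradient-norm argument in the small-step regime, but note that $(x^k - S_{t_k}(x^k))/t_k$ is a subgradient of the \emph{linearized} model $F(\cdot;x^k)$ at the point $S_{t_k}(x^k)$, so you would need sharpness of $F(\cdot;x^k)$ there rather than sharpness of $F$ at $x^k$; Assumption~\ref{ass:sharpness} does not hand you this directly, and making it rigorous is precisely the obstacle you flag. The paper avoids this entirely (see \eqref{eq_update_x} in Lemma~\ref{lemma_path_traj}(b)): it fixes $t'=2\Delta(x^k)/\lambda_s$, uses the large-step quadratic-contraction result \eqref{eq_exact_a1} to get $\|x^k - S_{t'}(x^k)\|_2 \geq \Delta(x^k)/2$, and then uses a three-point strong-convexity comparison between $S_{t_k}(x^k)$ and $S_{t'}(x^k)$ to deduce $\|x^k - S_{t_k}(x^k)\|_2 \geq (t_k/t')\,\|x^k - S_{t'}(x^k)\|_2$ when $t_k<t'$. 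This monotonicity-in-$t$ argument is cleaner and sidesteps the need for any subgradient-norm lower bound on the linearized model.
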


{These two results
indicate that 
for \adaipl{} with $t_k$ selected as in \eqref{choice_t_diminish},} if $\Delta(x^0)$ is small enough, {then the number of inner iterations 
needed to satisfy} \eqref{eq:low-high-practical} is
$\cO(1)$. 
{More precisely, it follows from the discussion below Lemma~\ref{lemma_traj_gen_tk} that}
when $G>0$ is chosen sufficiently \textit{small} so that $g_H\triangleq 3 G L\norm{x_\star}_2u_H/2\leq 2/\lambda_s$, {we would have $\lambda_s t_k/(2\Delta(x^k))
\leq \lambda_s g_H/2\leq 1$ holding for all $k\geq 0$ with high probability.} Thus, {for 
both \lac{} and \hac{} in \eqref{eq:low-high-practical}, we get} $N_k
{=}\cO(\sqrt{m}B(\lambda_s/(4L))/\lambda_s)$. {According to \Cref{rel_lamL}, $\lambda_s/(4L)\leq \|x_\star\|_2/8$; therefore, using the  definition of $B(\cdot)$ in \Cref{bound_b2},}\looseness=-10  \begin{equation}\label{eq_temp_B_discussion}
B\Big(\frac{\lambda_s}{4L}\Big) = \frac{2}{m}\|A\|_2\max_{i\in [m]} \|a_i\|_2\Big(\|x_\star\|_2 + \frac{\lambda_s}{4L}\Big){=}\cO(C_S\|x_\star\|_2\|A\|_2^2m^{-3/2}),
\end{equation} where $C_S\triangleq \sqrt{m} \max_{i\in [m]} \|a_i\|_2/\|A\|_2.$ Here, $C_S$ is a factor related to the complexity of solving the subproblem {in~\eqref{PL-rewrite-2}, i.e., equivalently \eqref{PL}\tcb{---}see~\Cref{rate_subproblem_updated} and \Cref{bound_b2}.} Note $C_S\geq 1$ as $\max_{i\in [m]}\|a_i\|_2^2\geq \frac{1}{m}\sum_{i=1}^m \|a_i\|_2^2 = \|A\|_F^2/m\geq \|A\|_2^2/m$. Thus, {using $L\triangleq 2\|A\|_2^2/m$, we get}
$$N_k{=}\cO(C_S\|x_\star\|_2\|A\|_2^2/(m\lambda_s)) = \cO(C_S\kappa_0),$$
in which $\kappa_0\triangleq L\|x_\star\|_2/(2\lambda_s)\geq 1$ is the condition number for \eqref{duchi_l1_ori} introduced in Section \ref{sec:data-gen} and Lemma \ref{rel_lamL}. It is crucial to note that this upper bound holds for all sufficiently small $G$ and does not increase as $G$ is chosen smaller. 

On the other hand, when $G>0$ is chosen sufficiently \textit{large} so that $g_L \triangleq G\lambda_s u_L> 2/\lambda_s$, {then according to part (b) of \Cref{prop_alpha_and_tk}, with high probability we have} $\lambda_s t_k/(2\Delta(x^k))=\lambda_s g_k/2 
\geq 1$ {(since $g_k\geq g_L$)} for all $k\geq 0$. Thus, 
Lemma \ref{iteration_low_sub} and Lemma \ref{iteration_high_sub} together with 
\eqref{eq_temp_B_discussion} imply that
\begin{equation*}
N_k\leq
\begin{cases}
M_1\sqrt{\frac{t_k}{\lambda_s\Delta(x^k)}}\cdot\sqrt{\frac{1+\rho_l}{\rho_l}}{=} \cO(C_S\kappa_0\sqrt{g_H\lambda_s}) & \text{ for \lac{},}\\
M_2\frac{t_k}{\Delta(x^k)}\sqrt{\frac{1+\rho_h}{\rho_h}}{=} \cO(C_S\kappa_0g_H\lambda_s) & \text{ for \hac{}.}
\end{cases}
\end{equation*}%
Since $g_H\triangleq 3 G L\norm{x_\star}_2u_H/2$, in terms of the dependency on $G$, we have $N_k = \cO(\sqrt{G})$ under \lac{} in \eqref{eq:low-high-practical}, and $N_k = \cO(G)$ under \hac{} in \eqref{eq:low-high-practical}.

\subsection{Overall Complexity}
Finally, we are ready to provide the overall iteration complexity for {\adaipl{}. 
Indeed, in \Cref{overall_complexity_diminish_stepsize_gen} 
we establish a bound on $N(\epsilon)$ for \textit{any} given $G>0$ when $t_k$ is chosen according to \eqref{choice_t_diminish}.} 
\begin{theorem}\label{overall_complexity_diminish_stepsize_gen}
Suppose 
Assumption~\ref{ass:sharpness} holds and a subproblem solver satisfying
\eqref{ineq_sub_rate} is given. \sa{Under the statistical event in \eqref{condition_med_prop}, which holds with high probability under Assumption \ref{ass_data},}
if $\Delta(x^0)\leq \min\{E(\frac{\lambda_s^2}{4L}),E(\frac{\lambda_s}{g_H L})\}$, {where
$ E(r) \triangleq \frac{1}{2L}\left(\sqrt{L^2\|x_\star\|_2^2+4rL} - L\|x_\star\|_2\right)$ defined in Lemma \ref{lip_F2} and $g_H\geq g_L>0$ are constants defined in \Cref{prop_alpha_and_tk}, then} the following conclusions hold with {$C(\rho)\triangleq 1 - \frac{5\min\{\lambda_sg_L,~2\}}{16(1+\rho)}$}.

(a) When \emph{$\cond$} in \emph{\adaipl{}}, stated in \Cref{alg:adaptive-IPL}, is set to (\textbf{LAC}) in \eqref{eq:low-high-practical} for some $\rho_l>0$, 
for any $\epsilon\in (0,\Delta(x^0))$, it holds that
\begin{equation*}
N(\epsilon)\leq M_1\cdot\frac{\sqrt{\frac{g_H}{\lambda_s\min\{1,\lambda_sg_H/2\}}}\cdot\sqrt{\frac{1+\rho_l}{\rho_l}}\cdot\log\left(\frac{2\Delta(x^0)L\|x_\star\|_2}{\lambda_s C(\rho_l)
	}\cdot\frac{1}{\epsilon}\right)}{\log\left(1/C(\rho_l)
	\right)}.
\end{equation*}%

(b) {When \emph{$\cond$} in \emph{\adaipl{}}, stated in \Cref{alg:adaptive-IPL}, is set to (\textbf{HAC}) in \eqref{eq:low-high-practical} for some $\rho_h\in (0,1/4)$, 
for any $\epsilon\in (0,\Delta(x^0))$, it holds for ${\bar\rho} \triangleq 2\rho_h/(1-4\rho_h)$ that}
\begin{equation*}
N(\epsilon)\leq M_2\cdot \frac{\frac{g_H}{\min\{1,\lambda_sg_H/2\}}\sqrt{\frac{1+\rho_h}{\rho_h}}\cdot\log\left(\frac{2\Delta(x^0)L\|x_\star\|_2}{\lambda_s C(\bar\rho)
	}\cdot\frac{1}{\epsilon}\right)}{\log\left(1/C(\bar\rho)
	\right)}.
\end{equation*}%
\end{theorem}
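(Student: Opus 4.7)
\textbf{Proof proposal for Theorem \ref{overall_complexity_diminish_stepsize_gen}.} My plan is to combine the per-step outer contraction from Lemma \ref{lemma_traj_gen_tk}, the per-subproblem inner iteration bounds from Lemmas \ref{iteration_low_sub} and \ref{iteration_high_sub}, and the quantile-to-distance equivalence in Corollary \ref{prop_alpha_and_tk} to obtain a uniform contraction factor and a uniform inner-iteration count, then sum. I will only write part (a) in detail, as part (b) is identical up to replacing $\rho_l$ with $\bar\rho=2\rho_h/(1-4\rho_h)$ and invoking Lemma \ref{iteration_high_sub} instead of Lemma \ref{iteration_low_sub}.

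First I would set up the invariant that $\Delta(x^k)\leq \lambda_s/(4L)$ for every $k\geq 0$, using induction. The base case follows from the initialization hypothesis together with Lemma \ref{lip_F2}: since $\Delta(x^0)\leq E(\lambda_s^2/(4L))$, we get $F(x^0)-F(x_\star)\leq \lambda_s^2/(4L)$, and then sharpness (Assumption~\ref{ass:sharpness}) gives $\Delta(x^0)\leq \lambda_s/(4L)$. For the inductive step I use Lemma \ref{lemma_traj_gen_tk}(a) in conjunction with Corollary \ref{prop_alpha_and_tk}(b): for the adaptive step size $t_k=\min\{L^{-1},g_k\Delta(x^k)\}$, the ratio $\sigma_k\triangleq \lambda_s t_k/(2\Delta(x^k))$ equals $\lambda_s g_k/2$ when $t_k=g_k\Delta(x^k)$, and equals $\lambda_s/(2L\Delta(x^k))\geq 2$ whenever $t_k=1/L$ (using $\Delta(x^k)\leq \lambda_s/(4L)$). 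Either way, $\min\{\sigma_k,1\}\geq \min\{\lambda_s g_L/2,\,1\}=\tfrac{1}{2}\min\{\lambda_s g_L,\,2\}$, which plugged into Lemma \ref{lemma_traj_gen_tk}(a) yields the uniform one-step contraction
\begin{equation*}
F(x^{k+1})-F(x_\star)\leq C(\rho_l)\,\bigl(F(x^k)-F(x_\star)\bigr).
\end{equation*}
Iterating gives $F(x^k)-F(x_\star)\leq C(\rho_l)^k(F(x^0)-F(x_\star))$, and sharpness together with the base case then preserves $\Delta(x^k)\leq \lambda_s/(4L)$, closing the induction.

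Next I bound the outer iteration count $K_\epsilon$. Combining Lemma \ref{lip_F1} with $\Delta(x^0)\leq \|x_\star\|_2$ (which follows from the initialization and Lemma \ref{rel_lamL}) gives $F(x^0)-F(x_\star)\leq 2L\|x_\star\|_2\Delta(x^0)$ by the standard estimate $\|x^0+\hat x\|_2\leq 3\|x_\star\|_2$ where $\hat x\in\{\pm x_\star\}$ realizes $\Delta(x^0)$. Thus sharpness yields $\Delta(x^k)\leq C(\rho_l)^k\cdot 2L\|x_\star\|_2\Delta(x^0)/\lambda_s$, and using that at iteration $K_\epsilon-1$ we still have $\Delta(x^{K_\epsilon-1})>\epsilon$ produces the logarithmic bound with the exact $\log\bigl(1/C(\rho_l)\bigr)$ denominator and the $C(\rho_l)$ factor inside the numerator's argument (arising from converting the strict inequality at index $K_\epsilon-1$ to a bound on $K_\epsilon$).

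For the inner iteration count, I would show that $N_k$ admits a $k$-independent upper bound. Lemma \ref{iteration_low_sub} gives $N_k\leq M_1\sqrt{t_k/[\lambda_s\Delta(x^k)\min\{1,\sigma_k\}]}\sqrt{(1+\rho_l)/\rho_l}$. Writing $t_k/\Delta(x^k)=2\sigma_k/\lambda_s$, the key ratio becomes $\tfrac{2\sigma_k}{\lambda_s^2\min\{1,\sigma_k\}}$, which is monotone in $\sigma_k$ in a piecewise sense: in the case $t_k=g_k\Delta(x^k)$ it equals $g_k/[\lambda_s\min\{1,\lambda_s g_k/2\}]$ and is majorized by $g_H/[\lambda_s\min\{1,\lambda_s g_H/2\}]$ (one checks this by considering the two subregimes $\lambda_s g_H/2\gtreqless 1$ separately); in the case $t_k=1/L$, one uses $\sigma_k\geq 2$ so $\min\{1,\sigma_k\}=1$ and $t_k/\Delta(x^k)=1/[L\Delta(x^k)]\leq g_k\leq g_H$. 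This proves $N_k\leq M_1\sqrt{g_H/[\lambda_s\min\{1,\lambda_s g_H/2\}]}\sqrt{(1+\rho_l)/\rho_l}$. Finally, $N(\epsilon)=\sum_{k=0}^{K_\epsilon-1}N_k\leq K_\epsilon\cdot\max_k N_k$ yields the claimed bound. The most delicate step is the case analysis for the uniform inner-iteration bound: making sure that both the $t_k=1/L$ regime and the adaptive regime $t_k=g_k\Delta(x^k)$, with $g_k$ ranging over $[g_L,g_H]$, collapse into the single cleanly-stated quantity $g_H/\min\{1,\lambda_s g_H/2\}$ regardless of whether $G$ is chosen small or large.
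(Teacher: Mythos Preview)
Your proposal is correct and follows essentially the same strategy as the paper: induction on an invariant (using Lemma~\ref{lip_F2} for the base case and Lemma~\ref{lemma_traj_gen_tk} for the step), the uniform contraction $F(x^{k+1})-F(x_\star)\leq C(\rho_l)(F(x^k)-F(x_\star))$, Lemma~\ref{iteration_low_sub}/\ref{iteration_high_sub} for a uniform $N_k$ bound, and $N(\epsilon)\leq K_\epsilon\cdot\sup_k N_k$.

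There is one organizational difference worth noting. The paper also carries, as part of the induction, the second invariant $\Delta(x^k)\leq (g_HL)^{-1}$ (obtained from the hypothesis $\Delta(x^0)\leq E(\lambda_s/(g_HL))$ via Lemma~\ref{lip_F2}). This extra invariant forces $g_k\Delta(x^k)\leq g_H\Delta(x^k)\leq 1/L$, so the $\min$ in \eqref{choice_t_diminish} \emph{always} selects the adaptive branch, i.e., $t_k=g_k\Delta(x^k)$ for every $k$. This eliminates your $t_k=1/L$ case entirely, so the inner-iteration bound collapses immediately to $N_k\leq M_1\sqrt{g_k/(\lambda_s\min\{1,\lambda_s g_k/2\})}\cdot\sqrt{(1+\rho_l)/\rho_l}$, and the monotonicity of $g\mapsto g/\min\{1,\lambda_s g/2\}$ finishes it. Your two-case analysis for $N_k$ is correct (in particular, your observation that $t_k=1/L$ forces $1/(L\Delta(x^k))\leq g_k\leq g_H$, hence the ratio is still dominated by $g_H/(\lambda_s\min\{1,\lambda_s g_H/2\})$, is valid), and it has the incidental virtue of never invoking the second initialization bound $E(\lambda_s/(g_HL))$; but the paper's route is tidier precisely because it spends that hypothesis to suppress the case split.
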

{Next, we discuss that whenever $\Delta(x^0)$ is sufficiently small and $G$ in \eqref{choice_t_diminish} is sufficiently large, using \hac{} in \eqref{eq:low-high-practical} leads to a better complexity bound when compared to that of Theorem \ref{overall_complexity_diminish_stepsize_gen}(b).}
\begin{theorem}\label{overall_complexity_diminish_stepsize}
{Suppose 
Assumption~\ref{ass:sharpness} holds and a subproblem solver satisfying
and \eqref{ineq_sub_rate} is given. Moreover, we assume that the probabilistic event in
\eqref{condition_med_prop} holds. If constants defined in \Cref{prop_alpha_and_tk} satisfy $g_H\geq g_L\geq 2/\lambda_s$, \emph{$\cond$} in \emph{\adaipl{}}, stated in \Cref{alg:adaptive-IPL}, is set to (\textbf{HAC}) in \eqref{eq:low-high-practical} for some $\rho_h\in (0,1/4)$, and $\Delta(x^0)\leq \min\left\{\frac{\rho_h}{2Lg_L},\|x_\star\|_2/\sqrt{M_0},(g_HL)^{-1}\right\},$ }
then for any $\epsilon\in (0,\Delta(x^0))$, it holds that 
\begin{equation*}
N(\epsilon)\leq M_2\cdot\frac{g_H\cdot \sqrt{\frac{1+\rho_h}{\rho_h}}\cdot \log \left(\frac{\lambda_s\Delta(x^0)g_L}{6\rho_h}\cdot\frac{1}{\epsilon}\right)}{\log\left(\lambda_sg_L/(6\rho_h)\right)}
\end{equation*}%
\end{theorem}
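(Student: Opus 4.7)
The plan is to exploit the ``large $G$'' regime assumed here, i.e., $g_L\geq 2/\lambda_s$, to put \adaipl{} with \hac{} into the super-linear regime of Lemma~\ref{lemma_path_traj_high}, derive a linear rate on $\Delta(x^k)$, and then multiply the resulting bound on $K_\epsilon$ by a uniform bound on $N_k$ coming from Lemma~\ref{iteration_high_sub}.

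First I would establish, by induction on $k$, that $\Delta(x^k)\leq\Delta(x^0)$ for all $k\geq 0$ and that $t_k$ always satisfies the hypotheses of Lemma~\ref{lemma_path_traj_high}. The initialization $\Delta(x^0)\leq\min\{\rho_h/(2Lg_L),\ \|x_\star\|_2/\sqrt{M_0},\ (g_HL)^{-1}\}$ together with $g_L\geq 2/\lambda_s$ and $\rho_h<1/4$ implies $\Delta(x^0)\leq\lambda_s/(4L)$, so the initial iterate sits in the good region. Since $\Delta(x^0)\leq (g_HL)^{-1}$, \cref{prop_alpha_and_tk}(b) gives $t_k=g_k\Delta(x^k)$ with $g_k\in[g_L,g_H]$; combined with $g_L\geq 2/\lambda_s$, this yields $t_k\geq 2\Delta(x^k)/\lambda_s$ and $t_k\leq g_H\Delta(x^k)\leq L^{-1}$, exactly the range required by Lemma~\ref{lemma_path_traj_high}.

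Next I would substitute $t_k=g_k\Delta(x^k)$ into the bound from Lemma~\ref{lemma_path_traj_high}, obtaining
\begin{equation*}
\Delta(x^{k+1})\leq\frac{4\rho_h}{\lambda_s g_k}\Delta(x^k)+\frac{4L}{\lambda_s}\Delta^2(x^k).
\end{equation*}
Using $\Delta(x^k)\leq\Delta(x^0)\leq\rho_h/(2Lg_L)$ to absorb the quadratic term (it is bounded by $\frac{2\rho_h}{\lambda_s g_L}\Delta(x^k)$) and $g_k\geq g_L$, I would get the linear contraction $\Delta(x^{k+1})\leq\frac{6\rho_h}{\lambda_s g_L}\Delta(x^k)$. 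The factor $q^{-1}\triangleq 6\rho_h/(\lambda_s g_L)$ is strictly less than $1$ because $g_L\lambda_s\geq 2>3/2>6\rho_h$. This both closes the induction (so $\Delta(x^k)$ stays in the region where our estimates apply) and gives
\begin{equation*}
K_\epsilon\leq\left\lceil\frac{\log(\Delta(x^0)/\epsilon)}{\log q}\right\rceil\leq\frac{\log(q\Delta(x^0)/\epsilon)}{\log q}=\frac{\log\bigl(\lambda_s g_L\Delta(x^0)/(6\rho_h\epsilon)\bigr)}{\log\bigl(\lambda_s g_L/(6\rho_h)\bigr)}.
\end{equation*}

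For the inner complexity, I would invoke Lemma~\ref{iteration_high_sub}: since $\lambda_s t_k/(2\Delta(x^k))=\lambda_s g_k/2\geq\lambda_s g_L/2\geq 1$, the minimum in the denominator equals $1$, and $t_k/\Delta(x^k)=g_k\leq g_H$, so $N_k\leq M_2\,g_H\sqrt{(1+\rho_h)/\rho_h}$ uniformly in $k$. Multiplying this uniform bound by the bound on $K_\epsilon$ yields exactly the claimed estimate for $N(\epsilon)=\sum_{k=0}^{K_\epsilon-1}N_k$.

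The main obstacle I anticipate is the verification that all the hypotheses feed consistently through the induction: one has to check simultaneously that $\Delta(x^k)$ remains below each of $\lambda_s/(4L)$, $\|x_\star\|_2/\sqrt{M_0}$, $(g_HL)^{-1}$, and $\rho_h/(2Lg_L)$ so that the quadratic term in Lemma~\ref{lemma_path_traj_high} can be absorbed into the linear one and so that $t_k$ stays in the range $[2\Delta(x^k)/\lambda_s,L^{-1}]$. Showing that the single initialization bound $\Delta(x^0)\leq\min\{\rho_h/(2Lg_L),\|x_\star\|_2/\sqrt{M_0},(g_HL)^{-1}\}$ automatically implies each of these (using $g_L\geq 2/\lambda_s$ and $\rho_h<1/4$ to dominate the missing $\lambda_s/(4L)$ constraint) is the delicate bookkeeping step; everything else is then a clean geometric-series argument.
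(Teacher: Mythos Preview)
Your proposal is correct and follows essentially the same route as the paper's proof: induction to keep $\Delta(x^k)$ in the region where $t_k=g_k\Delta(x^k)\in[2\Delta(x^k)/\lambda_s,L^{-1}]$, invoke Lemma~\ref{lemma_path_traj_high} and absorb the quadratic term using $\Delta(x^k)\leq\rho_h/(2Lg_L)$ to get the contraction factor $6\rho_h/(\lambda_s g_L)$, bound $K_\epsilon$ from the resulting geometric decay, and combine with the uniform bound $N_k\leq M_2 g_H\sqrt{(1+\rho_h)/\rho_h}$ from Lemma~\ref{iteration_high_sub}. The only small point you leave implicit is that \cref{prop_alpha_and_tk}(b) also requires $\Delta(x^k)\leq\|x_\star\|_2$, which follows from $\Delta(x^k)\leq\Delta(x^0)\leq\lambda_s/(4L)\leq\|x_\star\|_2/8$ via Lemma~\ref{rel_lamL}; this fits into the bookkeeping you already flagged.
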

\begin{remark}
Initialization requirements on $\Delta(x^0)$ in both Theorem \ref{overall_complexity_diminish_stepsize_gen} and Theorem \ref{overall_complexity_diminish_stepsize} can be satisfied by Algorithm 3 in \cite{duchi2019solving} with high probability when $m/n$ is large enough and $p_{\mathrm{fail}}$ is small enough. Please refer to Remark \ref{remark_init_subgradiant} for more explanations.
\end{remark}

Next, we explain the results in Theorems \ref{overall_complexity_diminish_stepsize_gen} and \ref{overall_complexity_diminish_stepsize}. 
Indeed, when $\Delta(x^0)$ is sufficiently small, the total complexity for reaching an $\epsilon-$optimal point is $\cO(\log \frac{1}{\epsilon})$ for any $G>0$ and $\epsilon < \Delta(x^0)$. Specifically, when $G>0$ is chosen sufficiently \textit{small} so that $g_H\leq 2/\lambda_s$, we have $\min\{1,\lambda_sg_H/2\} = \lambda_sg_H/2$ and $\min\{1,\lambda_sg_L/2\} = \lambda_sg_L/2$. 
{Therefore, \Cref{overall_complexity_diminish_stepsize_gen} implies that} $N(\epsilon) {=} \cO(\frac{C_S\kappa_0}{g_L\lambda_s}\log \frac{1}{\epsilon})$ for both \lac{} and \hac{} in \eqref{eq:low-high-practical}. 
{Moreover, since $g_L = G\lambda_s u_L$, the complexity bound is $\cO\Big(\frac{1}{G}\log(\frac{1}{\epsilon})\Big)$ in terms of $G$ and $\epsilon$.} On the other hand, when $G>0$ is chosen sufficiently \textit{large} so that $g_L > 2/\lambda_s$, we have $\min\{1,\lambda_sg_H/2\} = \min\{1,\lambda_sg_L/2\} = 1$. Thus, 
\Cref{overall_complexity_diminish_stepsize_gen}(a) implies that
$N(\epsilon) {=} \cO(C_S\kappa_0\sqrt{g_H\lambda_s}\log \frac{1}{\epsilon})$ for \lac{} in \eqref{eq:low-high-practical} and $N(\epsilon) {=}\cO(\frac{C_S\kappa_0 g_H\lambda_s}{\log (g_L\lambda_s)}\log \frac{1}{\epsilon})$ for \hac{} in \eqref{eq:low-high-practical}. Recalling that $g_L \triangleq G\lambda_s u_L$ and $g_H \triangleq 3G L\|x_\star\|_2 u_H/2$, the complexity depends on $G$ and $\epsilon$ as $\cO(\sqrt{G}\log \frac{1}{\epsilon})$ for \lac{} in \eqref{eq:low-high-practical} and {as} $\cO(\frac{G}{\log G}\log \frac{1}{\epsilon})$ for \hac{} in \eqref{eq:low-high-practical}. 
To conclude, if we were to set $t_k = 2\Delta(x^k)/\lambda_s$ for any $k\in\mathbb{N}$, \adaipl{} would achieve a total complexity of $\cO(C_S\kappa_0\log\frac{1}{\epsilon})$ under both \lac{} and \hac{}. However, since setting $g_k = 2/\lambda_s$ for $k\in\mathbb{N}$ may be impractical, 
the corresponding bound can be interpreted as the ideal total complexity bound.
\subsection{{Comparison of \adasub{} and \adaipl{} complexities}}
\label{sec:complexity-comparison}
{Here we compare the total complexity of \adasub{} and \adaipl{} for reaching an $\epsilon-$optimal solution of \eqref{duchi_l1_ori} with the existing deterministic algorithms summarized in Section \ref{sec:intro}.} The total complexity refers to {total 
\textit{inner iteration} numbers} for \pl{}, \ipl{}, and \adaipl{} and refers to iteration numbers for \psub, \gsub, and \adasub{}. The comparison is 
{fair} as the computational complexity of either a subgradient-type iteration or an inner iteration for solving the subproblem in~\eqref{PL} using either \apg{} or \apd{} is dominated by 
matrix-vector multiplications {involving matrices of size $m\times n$.} 

{In Table~\ref{tab:perfect_situation_modified} 
provided in the introduction we} summarized {the total complexities for all the 
methods under the ideal situations for each of them. The total complexity for \pl{} is unknown because it requires 
solving \eqref{PL} \textit{exactly}, which is not practical. \ipl{} is only guaranteed to achieve sublinear rate leading to $\cO(1/\epsilon)$ complexity. In the ideal scenario, the convergence of \psub, \gsub, and \adasub{} are all linear leasing to $\cO(\kappa_0^2\log \frac{1}{\epsilon})$ complexity. Here, the ideal scenario means that $F(x_\star)$ is known for \psub, $\lambda_0$ and $q$ are set exactly as in \cite[Theorem 5.1]{davis2018subgradient} for \gsub{} (which is not practical),} and $c = 0$ in Theorem \ref{thm:convergence_ada_subgrad} for \adasub{}. The total complexity of \adaipl{} 
is $\cO(C_S\kappa_0\log \frac{1}{\epsilon})$ under the ideal situation that $g_L = g_H = 2/\lambda_s$ in \Cref{prop_alpha_and_tk}(b). It is the best in terms of the condition number but contains an additional factor $C_S$ that results from the iteration complexity {associated with inexactly solving the subproblems in} \eqref{PL}. Thus, \adaipl{} is expected to show greater efficiency than \adasub{} when $\kappa_{\mathrm{st}}/\|\Sigma\|_2 - 2p_{\mathrm{fail}}$ {is small (this quantity is discussed in Section \ref{sec:data-gen} and is approximately equal to $1/\kappa_0$), i.e., when the condition number for \eqref{duchi_l1_ori} is \textit{large}.}

{Next, aiming to compute an $\epsilon$-optimal solution to \eqref{duchi_l1_ori} for a given sufficiently small $\epsilon>0$, in Table \ref{tab:imperfect_situation}, we  
discuss the robustness of \adaipl{} and \adasub{} to the algorithm parameter $G$ in terms of main (outer) iterations ($K_\epsilon$), the largest number of inner iterations per subproblem solve ($\sup_{k\in\mathbb{N}}N_k$), and the total complexity ($N_\epsilon$)\tcb{---}we consider the effects of overly large or overly small choices of the step size parameter $G>0$ (see the stepsize rules in~\eqref{choice_alpha_subg} 
and \eqref{choice_t_diminish}).
Here, we treat each \adasub{} iteration as an outer iteration requiring only one inner iteration for each outer iteration, i.e., $N(\epsilon)=K_\epsilon$. 
Table \ref{tab:imperfect_situation} highlights the advantages of \adasub{} and \adaipl{} in hyper-parameter tuning over \psub{} and \gsub{}: \adasub{} tolerates overly small $G$ values and \adaipl{} works with any $G>0$ while \psub{} and \gsub{} rely on some particular choice of hyper-parameter values to guarantee convergence.}
\renewcommand{\arraystretch}{1.3}
\begin{table*}[h]
\centering
\begin{tabular}{|c|c|c|c|c|}
\hline
\textbf{Algorithm} & $G$ & $K_\epsilon$ & $\sup_{k\in\mathbb{N}} N_k$ & $N(\epsilon)$\\
\hline
\adasub{} & Large ($c_1\geq 2$) & diverge & diverge & diverge\\
\hline
\adasub{} & Small ($c_2 < 1$) & $\tilde{\cO}\left(\kappa_0^2/c_1\right)$ & 1 & $\tilde{\cO}\left(\kappa_0^2/c_1\right)$\\
\hline
\adaipl{}\texttt{-LAC} & Large ($g_L\geq \frac{2}{\lambda_s}$) & $\tilde{\cO}(1)$ & $\cO(C_S\kappa_0\sqrt{g_H\lambda_s})$ & $\tilde{\cO}(C_S\kappa_0\sqrt{g_H\lambda_s})$\\
\hline
\adaipl{}\texttt{-HAC} & Large ($g_L\geq \frac{2}{\lambda_s}$) & $\tilde{\cO}(1/\log (g_L\lambda_s))$ & $\cO(C_S\kappa_0 g_H\lambda_s)$ & $\tilde{\cO}\Big(\frac{C_S\kappa_0g_H\lambda_s}{\log (g_L\lambda_s)}\Big)$\\
\hline
\adaipl{}\texttt{-LAC} & Small ($g_H < \frac{2}{\lambda_s}$) & $\tilde{\cO}(1/(g_L\lambda_s))$ & $\cO(C_S\kappa_0)$ & $\tilde{\cO}(\frac{C_S\kappa_0}{g_L\lambda_s})$\\
\hline
\adaipl{}\texttt{-HAC} &Small ($g_H < \frac{2}{\lambda_s}$) & $\tilde{\cO}(1/(g_L\lambda_s))$ & $\cO(C_S\kappa_0)$ & $\tilde{\cO}(\frac{C_S\kappa_0}{g_L\lambda_s})$\\
\hline
\end{tabular}%
\caption{
Robustness of {\adasub{}} and {\adaipl{}} to parameter choice in terms of  
number of outer iterations ($K_\epsilon$), the maximum number of inner iterations per subproblem solve ($\sup_{k\in\mathbb{N}} N_k$), and the total complexity ($N(\epsilon)$). ``\emph{\texttt{-LAC}}" and ``\emph{\texttt{-HAC}}" 
indicate whether (\textbf{LAC}) or (\textbf{HAC}) is used for \adaipl{}. Here $\tilde{\cO}(\cdot)$ 
hides $\log \frac{1}{\epsilon}$.}\label{tab:imperfect_situation} 
\end{table*}%
\section{Proofs for Section \ref{sec:conv_adaipl}}\label{sec:proof_adaipl}
We first introduce some notation. For all $k\geq 0$, let
\begin{align*}
S_{t_k}(x^k) \triangleq \argmin_{x\in\mathbb{R}^n} \ F_{t_k}(x;x^k),\quad 
\varepsilon_{t_k}(x;x^k) \triangleq F_{t_k}(x;x^k) - F_{t_k}(S_{t_k}(x^k);x^k),
\end{align*}
where $F_t(\cdot;\cdot)$ is defined in \eqref{def-Ft}. {Here, the minimizer $S_{t_k}(x^k)$ is unique since $F_{t_k}(\cdot;x^k)$ is strongly {convex}
with modulus $1/t_k$ for all $k\geq 0$.} 
According to \Cref{sufficiency_pd}, \lac{} and \hac{} in \eqref{eq:low-high-practical} provide sufficient conditions for \lace{} and \hace{} in \eqref{eq:low-high} to hold, respectively.

\subsection{Proof of \Cref{lemma_traj_gen_tk}}
Within this proof, without loss of generality, we assume that $\Delta(x^k) = \|x^k - x_\star\|_2$. We split the proof into four parts.
\subsubsection{\eqref{PL} is exactly solved and the step size belongs to $[2\Delta(x^k)/\lambda_s,L^{-1}]$}\label{sec:proof_lemma61_part1}
Let $\tsup[1]{x}^{k+1} \triangleq S_{\tilde{t}_k}(x^k)$ for some arbitrary  $\tsup[1]{t}_{k} \in [2\Delta(x^k)/\lambda_s,L^{-1}]$\tcb{---}the interval is not empty because we assume $\Delta(x^k)\leq \lambda_s/(4L)$ in Lemma \ref{lemma_traj_gen_tk}; hence, 
\begin{align}\label{eq00_proof_gen_descent_tk}
F(\Tilde{x}^{k+1}) \leq F_{\tilde{t}_k}(\Tilde{x}^{k+1};x^k)\leq F(x_\star;x^k) + \frac{1}{2\tilde t_k}\|x_\star - x^k\|_2^2\leq F(x_\star) + \left(\frac{1}{2\tilde t_k}+\frac{L}{2}\right)\|x_\star - x^k\|_2^2 
\end{align}
%
in which the first and the third inequalities hold because of \eqref{rel:gen_weak1} and the second one holds because $\Tilde{x}^{k+1}$ is the minimizer of $F_{\tilde t_k}(\cdot;x^k)$. Due to the fact that $\Delta(x^k) = \|x^k-x_\star\|_2\leq \lambda_s/(4L)$ and $\tilde t_k\geq 2\Delta(x^k)/\lambda_s$, we have
$\left(\frac{1}{2\tilde t_k}+\frac{L}{2}\right)\|x_\star - x^k\|_2^2\leq \frac{\lambda_s}{4}\Delta(x^k) +\frac{L}{2}\frac{\lambda_s}{4L}\Delta(x^k) = \frac{3}{8}{\lambda_s}\Delta({x^k});$%
hence, using \eqref{ineq:sharpness}, we get $\left(\frac{1}{2\tilde{t}_k}+\frac{L}{2}\right)\|x_\star - x^k\|_2^2\leq 3(F(x^k) - F(x_\star))/8$.
Applying it to \eqref{eq00_proof_gen_descent_tk}, we have\looseness=-5
\begin{equation}\label{eq11_proof_gen_descent_tk}
F(x^k) - F(\Tilde{x}^{k+1})\geq F(x^k) - F_{\tilde{t}_k}(\Tilde{x}^{k+1};x^k)\geq \frac{5}{8}(F(x^k) - F(x_\star)).
\end{equation}
\subsubsection{
\eqref{PL} is exactly solved and the step size is less than $2\Delta(x^k)/\lambda_s$}
Denote $\tsup{x}^{k+1} \triangleq S_{\tsup{t}_k}(x^k)$ for some $\tsup{t}_{k}\in (0, 2\Delta(x^k)/\lambda_s)$. {Consider the result in \Cref{sec:proof_lemma61_part1} for} $\tsup[1]{t}_k = 2\Delta(x^k)/\lambda_s$ and let $\bar{x}^{k+1} \triangleq x^k + \frac{\tsup{t}_k}{\tsup[1]{t}_k}(\Tilde{x}^{k+1} - x^k)$. Hence, 
\begin{align*}
F_{\tilde{t}_k}(\bar{x}^{k+1};x^k)\leq \frac{\tsup{t}_k}{\tilde{t}_k}F_{\tilde{t}_k}(\Tilde{x}^{k+1};x^k) + \left(1- \frac{\tsup{t}_k}{\tilde{t}_k}\right)F(x^k) - \frac{(\frac{\tsup{t}_k}{\tilde{t}_k})(1-\frac{\tsup{t}_k}{\tilde{t}_k})}{2\tilde{t}_k}\|x^k - \Tilde{x}^{k+1}\|_2^2,
\end{align*}%
by strong convexity of $F_{\tsup[1]{t}_k}(\cdot;x^k)$. Thus, we have
\begin{align*}
&F(x^k) - F_{\tsup{t}_k}(\bar{x}^{k+1};x^k) = F(x^k) - F_{\tilde{t}_k}(\bar{x}^{k+1};x^k) + 
\Big(\frac{1}{2\tilde{t}_k}-\frac{1}{2\tsup{t}_k}\Big)\|\bar{x}^{k+1} - x^k\|_2^2\\ \nonumber
&\geq \frac{\tsup{t}_k}{\tilde{t}_k}(F(x^k) - F_{\tilde{t}_k}(\Tilde{x}^{k+1};x^k)) + \frac{(\frac{\tsup{t}_k}{\tilde{t}_k})(1-\frac{\tsup{t}_k}{\tilde{t}_k})}{2\tilde{t}_k}\|x^k - \Tilde{x}^{k+1}\|_2^2 + \Big(\frac{1}{2\tilde{t}_k}-\frac{1}{2\tsup{t}_k}\Big)\|\bar{x}^{k+1} - x^k\|_2^2 \\
&= \frac{\tsup{t}_k}{\tilde{t}_k}(F(x^k) - F_{\tilde{t}_k}(\Tilde{x}^{k+1};x^k)){\geq \frac{5\tsup{t}_k}{8\tilde{t}_k}(F(x^k) - F(x_\star)),}
\end{align*}%
{where in the last inequality we used 
\eqref{eq11_proof_gen_descent_tk}.}
Since {$\tsup{x}^{k+1}$} is the minimizer of $F_{\tsup{t}_k}(\cdot;x^k)$, we have $F_{\tsup{t}_k}({\tsup{x}^{k+1}};x^k)\leq F_{\tsup{t}_k}(\bar{x}^{k+1};x^k)$, which implies $F(x^k) - F_{\tsup{t}_k}({\tsup{x}^{k+1}};x^k)\geq \frac{5\tsup{t}_k}{8\tilde{t}_k}(F(x^k) - F(x_\star))$; 
therefore,
\begin{equation}\label{eq22_proof_gen_descent_tk}
{
\begin{aligned}
F(x^k) - F({\tsup{x}^{k+1}})\geq F(x^k) - F_{\tsup{t}_k}({\tsup{x}^{k+1}};x^k)\geq \frac{5\tsup{t}_k}{8\tilde{t}_k}(F(x^k) - F(x_\star)).
\end{aligned}}%
\end{equation}
Here, the first inequality is from \eqref{rel:gen_weak1}. This finishes part 2.
\subsubsection{Proof of part (a)} 
Combining \eqref{eq11_proof_gen_descent_tk} and \eqref{eq22_proof_gen_descent_tk}, we can \tcb{derive} that, for {any} $t_k\in (0,L^{-1}]$, whenever $\Delta(x^k)\leq \lambda_s/(4L)$ holds, one also has
\begin{equation}\label{eq_proof_eq_low_decrease}
\begin{aligned}
F(x^k) - F_{t_k}(S_{t_k}(x^k);x^k)\geq \frac{5\min\{\lambda_st_k/(2\Delta(x^k)),1\}}{8}(F(x^k) - F(x_\star)).
\end{aligned}
\end{equation}
{Furthermore, we have}
$F(x^k) - F(x^{k+1})\geq F(x^k) - F_{t_k}(x^{k+1};x^k)\geq \frac{1}{1+\rho_l}(F(x^k) - F_{t_k}(S_{t_k}(x^k);x^k))$, where the first inequality is due to  \eqref{rel:gen_weak1}, and the second one originates from \lace{} in \eqref{low-high-0}, {which is implied by \lac. Finally, combining the last inequality with \eqref{eq_proof_eq_low_decrease}, we complete the proof.}
\subsubsection{Proof of part (b)} 
For {part (b)}, we {first establish some} connections between \lace{} and \hace{} in \eqref{low-high-0}. 
We know that
{
\begin{align*}
&F_{t_k}(x^k;x^k) - F_{t_k}(x^{k+1};x^k) + \varepsilon_{t_k}(x^{k+1};x^k) = F_{t_k}(x^k;x^k) - F_{t_k}(S_{t_k}(x^k);x^k) \\
&\geq \frac{1}{2t_k}\|x^k-S_{t_k}(x^k)\|_2^2\geq \frac{1}{4t_k}\|x^k-x^{k+1}\|_2^2-\frac{1}{2t_k}\|x^{k+1}-S_{t_k}(x^k)\|_2^2.
\end{align*}}%
where the first inequality is due to the strong convexity of $F_{t_k}(\cdot;x^k)$, and the second one is from Cauchy-{Schwarz} inequality. Using the fact that 
$\frac{1}{2t_k}\|x^{k+1}-S_{t_k}(x^k)\|_2^2\leq \varepsilon_{t_k}(x^{k+1};x^k)$
due to strong convexity of $F_{t_k}(\cdot;x^k)$, we further obtain
\begin{equation}\label{eq_proof_high_temp}
{
\begin{aligned}
F_{t_k}(x^k;x^k) - F_{t_k}(x^{k+1};x^k) + 2\varepsilon_{t_k}(x^{k+1};x^k)\geq \frac{1}{4t_k}\|x^k-x^{k+1}\|_2^2.
\end{aligned}}%
\end{equation}
{Note that the inexact subproblem termination condition \hac{} implies \hace{} in \eqref{low-high-0}, which further implies that} $\varepsilon_{t_k}(x^{k+1};x^k)\leq \rho_h\|x^k-x^{k+1}\|_2^2/(2t_k)$; therefore, 
it follows from \eqref{eq_proof_high_temp} that 
\begin{equation}\label{eq_proof_high_temp2}
\begin{aligned}
\frac{2\rho_h}{1-4\rho_h}\left(F_{t_k}(x^k;x^k) - F_{t_k}(x^{k+1};x^k)\right)\geq \frac{\rho_h}{2t_k}\|x^k-x^{k+1}\|_2^2\geq {\varepsilon_{t_k}(x^{k+1};x^k)}.
\end{aligned}%
\end{equation}
This takes the same form as \lace{} in \eqref{low-high-0} when $\rho_l = \frac{2\rho_h}{1-4\rho_h}$. {Therefore, it directly follows from the arguments in the proof of part (a) that one can simply replace $\rho_l$ in (a) with $\frac{2\rho_h}{1-4\rho_h}$.} 
\subsection{Proof of \Cref{lemma_path_traj_high}}
We first show an auxiliary result for 
one-step behavior {of} \adaipl{}.
\begin{lemma}\label{descent_general}
{Given $\beta\in(0,1]$ and arbitrary $t_k>0$, for any \emph{\adaipl{}} iterate $x^k$,}
{
\begin{align}\label{ineq:descent_general}
F({x}) &- F(x_\star)+\frac{1-\beta}{2t_k}\|{x} - x_\star\|_2^2\leq \left(\frac{1}{2t_k}+\frac{L}{2}\right)\|x^k - x_\star\|_2^2+\beta^{-1}\varepsilon_{t_k}({x},x^k) + \left(\frac{L}{2} - \frac{1}{2t_k}\right)\|{x}-x^k\|_2^2,
\end{align}}%
for all $x\in\reals^n$. {The above inequality is also valid if we replace $x_\star$ with $-x_\star$ in \eqref{ineq:descent_general}.} 
\end{lemma}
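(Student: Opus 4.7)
The plan is to exploit that $F(\cdot; x^k)$ is convex (as a composition of the convex function $h$ with the affine map $c_{x^k}$) together with the fact that $S_{t_k}(x^k)$ is the unique minimizer of the $(1/t_k)$-strongly convex function $F_{t_k}(\cdot;x^k)$. The key device is to test the optimality of $S_{t_k}(x^k)$ not at $x$ or $x_\star$ directly, but at the convex combination $y_\beta \triangleq \beta x_\star+(1-\beta)x$; the parameter $\beta$ will then appear in exactly the two places shown in the claim.

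First I would combine convexity of $F(\cdot;x^k)$, which gives $F(y_\beta;x^k)\leq \beta F(x_\star;x^k)+(1-\beta)F(x;x^k)$, with the three-point identity
\begin{equation*}
\|y_\beta-x^k\|_2^2=\beta\|x_\star-x^k\|_2^2+(1-\beta)\|x-x^k\|_2^2-\beta(1-\beta)\|x_\star-x\|_2^2,
\end{equation*}
weighted by $\tfrac{1}{2t_k}$. This yields an upper bound on $F_{t_k}(y_\beta;x^k)$, and the optimality inequality $F_{t_k}(S_{t_k}(x^k);x^k)\leq F_{t_k}(y_\beta;x^k)$ then lets me rearrange and divide by $\beta$ to obtain
\begin{equation*}
F_{t_k}(x;x^k)\leq F_{t_k}(x_\star;x^k)+\beta^{-1}\varepsilon_{t_k}(x,x^k)-\tfrac{1-\beta}{2t_k}\|x-x_\star\|_2^2.
\end{equation*}
The $-\beta(1-\beta)/(2t_k)$ cross-term from the norm identity is exactly what produces the $(1-\beta)/(2t_k)$ coefficient on the left of \eqref{ineq:descent_general}.

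To finish, I would translate $F_{t_k}$-values into $F$-values using Lemma \ref{thm:gen_weak} with $t=1/L$, which gives $F(x)-F(x;x^k)\leq \tfrac{L}{2}\|x-x^k\|_2^2$ and $F(x_\star;x^k)-F(x_\star)\leq \tfrac{L}{2}\|x_\star-x^k\|_2^2$. Substituting the definition $F_{t_k}(\cdot;x^k)=F(\cdot;x^k)+\tfrac{1}{2t_k}\|\cdot-x^k\|_2^2$ then produces exactly the coefficients $(L/2-1/(2t_k))$ on $\|x-x^k\|_2^2$ and $(L/2+1/(2t_k))$ on $\|x^k-x_\star\|_2^2$ that appear in the claim. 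This step works for arbitrary $t_k>0$ because the weak-convexity bound is used in the constant form with $L/2$, independent of $t_k$. The variant with $-x_\star$ in place of $x_\star$ is immediate, since the argument above never uses any property specific to $x_\star$; redefining $y_\beta \triangleq \beta(-x_\star)+(1-\beta)x$ and repeating verbatim yields the analogous inequality. I do not anticipate a substantive obstacle here: the only piece of creativity is the choice of the interpolating point $y_\beta$, after which everything reduces to mechanical convex-analysis bookkeeping.
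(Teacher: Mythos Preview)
Your argument is correct and complete; the convexity/interpolation step yields exactly
\[
F_{t_k}(x;x^k)\leq F_{t_k}(x_\star;x^k)+\beta^{-1}\varepsilon_{t_k}(x,x^k)-\tfrac{1-\beta}{2t_k}\|x-x_\star\|_2^2,
\]
and the two applications of weak convexity then produce the stated coefficients. The remark on arbitrary $t_k>0$ is also right.

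The route, however, differs from the paper's. The paper never forms the interpolating point $y_\beta$. Instead it applies the $(1/t_k)$-strong convexity of $F_{t_k}(\cdot;x^k)$ at the minimizer, evaluated at $x_\star$, to obtain an extra term $-\tfrac{1}{2t_k}\|x_\star-S_{t_k}(x^k)\|_2^2$; it then bounds this via a Young-type inequality
\[
-\tfrac{1}{2t_k}\|x_\star-S_{t_k}(x^k)\|_2^2\leq \tfrac{\beta-1}{2t_k}\|x-x_\star\|_2^2+\tfrac{\beta^{-1}-1}{2t_k}\|x-S_{t_k}(x^k)\|_2^2,
\]
and finally uses strong convexity once more to replace $\tfrac{1}{2t_k}\|x-S_{t_k}(x^k)\|_2^2$ by $\varepsilon_{t_k}(x,x^k)$. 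So the paper introduces $\beta$ through a splitting inequality applied to a distance involving $S_{t_k}(x^k)$, whereas you introduce $\beta$ geometrically through the test point $y_\beta$ and the three-point identity. Your approach is arguably cleaner: it uses only plain optimality of $S_{t_k}(x^k)$ (not the quadratic growth from strong convexity) and avoids the Young/Cauchy--Schwarz step entirely. The paper's approach, on the other hand, keeps the minimizer $S_{t_k}(x^k)$ visible throughout, which can be convenient when one later wants to track $\|x-S_{t_k}(x^k)\|$ separately.
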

\begin{proof}
{Given arbitrary $\beta\in(0,1]$ and any $t_k>0$, then for any $x\in\reals^n$,}
{
\begin{align}\label{proof-lemma4-eq-1}
-\frac{1}{2t_k}\|x_\star-S_{t_k}(x^k)\|_2^2
&\leq \frac{\beta-1}{2t_k}\|x-x_\star\|_2^2+\frac{\beta^{-1}-1}{2t_k}\|x - S_{t_k}(x^k)\|_2^2\\ 
& \leq\frac{\beta-1}{2t_k}\|x-x_\star\|_2^2+(\beta^{-1}-1)\varepsilon_{t_k}(x;x^k), \nonumber
\end{align}}%
in which the first inequality holds because of Cauchy–Schwarz inequality, and the second inequality holds since $F_{t_k}(\cdot;x^k)$ is $1/t_k-$strongly convex. We then have
{
\begin{align*}
\MoveEqLeft F(x)-\varepsilon_{t_k}(x;x^k)\leq 
F_{t_k}(S_{t_k}(x^k);x^k) + \left(\frac{L}{2} - \frac{1}{2t_k} \right)\|x-x^k\|_2^2 \\
& \leq F(x_\star;x^k)+\frac{1}{2t_k}\|x^k-x_\star\|_2^2-\frac{1}{2t_k}\|x_\star-S_{t_k}(x^k)\|_2^2 + \frac{Lt_k-1}{2t_k} \|x-x^k\|_2^2 \\
&\leq F(x_\star)+\frac{Lt_k+1}{2t_k}\|x^k-x_\star\|_2^2-\frac{1}{2t_k}\|x_\star-S_{t_k}(x^k)\|_2^2 + \frac{Lt_k-1}{2t_k} \|x-x^k\|_2^2,
\end{align*}}%
where the first and the last inequalities use \eqref{rel:gen_weak1}, and the second inequality is from the strong convexity of $F_{t_k}(\cdot;x^k)$. 
Combining this inequality with \eqref{proof-lemma4-eq-1} yields \eqref{ineq:descent_general}. 
It is easy to {verify} that the proof still holds when we replace $x_\star$ with $-x_\star$.
\end{proof}
Next, we provide {another useful result} to be used in the proof of \Cref{lemma_path_traj_high}.
\begin{lemma}\label{lemma_same_closeness}
Suppose Assumption \ref{ass:sharpness} holds. For any $k\in\mathbb{N}$ with $t_k\in(0, 1/L]$, if \hace{} in \eqref{low-high-0} holds with $\rho_h\in [0,1/4)$ and $\Delta(x^k)\leq \|x_\star\|_2/\sqrt{M_0}$, {then} 
\begin{equation}\label{proof42_same_min}
{
\begin{aligned}
\mathbf{1}\left[\Delta(x^k) = \|x^k - x_\star\|_2\right] = \mathbf{1}\left[\Delta(x^{k+1}) = \|x^{k+1} - x_\star\|_2\right],
\end{aligned}}%
\end{equation}
{where} { $M_0 \triangleq \left(2+\sqrt{2}\rho_h^{3/4}/(1-2\rho_h^{1/2})\right)/(1-\sqrt{2}\rho_h^{1/4})$} and $\mathbf{1}[\cdot]$ is the indicator function.
\end{lemma}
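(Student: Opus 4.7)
By the sign symmetry $F(x)=F(-x)$, which follows from $\fprod{a_i,x}^2=\fprod{a_i,-x}^2$ and therefore also passes to $F_{t_k}(\cdot;x^k)$, \adaipl{} generates the same iterate from $x^k$ whether one identifies the ground truth with $x_\star$ or $-x_\star$. Consequently, to establish \eqref{proof42_same_min} it suffices to show that if $\Delta(x^k)=\|x^k-x_\star\|_2$, then $\|x^{k+1}-x_\star\|_2\le\|x^{k+1}+x_\star\|_2$. Three applications of the triangle inequality reduce this in turn to the scalar condition
\begin{equation*}
\Delta(x^k)+\|x^{k+1}-x^k\|_2\ \le\ \|x_\star\|_2, \qquad(\star)
\end{equation*}
because $\|x^{k+1}-x_\star\|_2\le\Delta(x^k)+\|x^{k+1}-x^k\|_2$, while $\|x^{k+1}+x_\star\|_2\ge\|x^k+x_\star\|_2-\|x^{k+1}-x^k\|_2\ge 2\|x_\star\|_2-\Delta(x^k)-\|x^{k+1}-x^k\|_2$.

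I would then control $\|x^{k+1}-x^k\|_2$ in two steps. First, the $(1/t_k)$-strong convexity of $F_{t_k}(\cdot;x^k)$ combined with \hace{} in \eqref{low-high-0} yields $\|x^{k+1}-S_{t_k}(x^k)\|_2\le\sqrt{\rho_h}\,\|x^{k+1}-x^k\|_2$, and a triangle inequality delivers
\begin{equation*}
\|x^{k+1}-x^k\|_2\ \le\ \frac{1}{1-\sqrt{\rho_h}}\,\|S_{t_k}(x^k)-x^k\|_2.
\end{equation*}
Second, strong convexity of $F_{t_k}(\cdot;x^k)$ also gives $\tfrac{1}{2t_k}\|S_{t_k}(x^k)-x^k\|_2^2\le F(x^k)-F_{t_k}(S_{t_k}(x^k);x^k)$; invoking $t_k\le 1/L$ and the weak convexity estimate \eqref{rel:gen_weak1} one checks $F_{t_k}(S_{t_k}(x^k);x^k)\ge F(x_\star)$, after which Lemma~\ref{lip_F1} and $\|x^k+x_\star\|_2\le 2\|x_\star\|_2+\Delta(x^k)$ produce
\begin{equation*}
\|S_{t_k}(x^k)-x^k\|_2^2\ \le\ \Delta(x^k)\bigl(2\|x_\star\|_2+\Delta(x^k)\bigr).
\end{equation*}

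Substituting these two estimates into $(\star)$ reduces the claim to a purely scalar inequality in $\delta\triangleq\Delta(x^k)/\|x_\star\|_2$ and $\rho_h$. Using the convenient split $\sqrt{\delta(2+\delta)}\le\sqrt{2\delta}+\delta$ together with the hypothesis $\delta\le 1/\sqrt{M_0}$, the $\sqrt{\delta}$-scale contribution and the linear-in-$\delta$ contribution can be bounded separately: the former is responsible for the factor $1-\sqrt{2}\rho_h^{1/4}$ in the denominator of $M_0$, while the latter, once $(1-\sqrt{\rho_h})$ is re-balanced against $(1-2\rho_h^{1/2})$, contributes the additional term $\sqrt{2}\rho_h^{3/4}/(1-2\rho_h^{1/2})$ in the numerator.

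The main obstacle is this last algebraic step. A crude substitution produces a threshold strictly worse than $\|x_\star\|_2/\sqrt{M_0}$; pinpointing the precise constants stated in the lemma requires careful bookkeeping of the quadratic-in-$\sqrt{\delta}$ inequality so that the boundary of $(\star)$ collapses exactly to $\Delta(x^k)\le\|x_\star\|_2/\sqrt{M_0}$ with $M_0$ of the form displayed, and in particular requires that the conditions $1-2\rho_h^{1/2}>0$ and $1-\sqrt{2}\rho_h^{1/4}>0$ (both equivalent to $\rho_h<1/4$) be used to ensure that each intermediate denominator remains positive.
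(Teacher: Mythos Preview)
Your reduction to $(\star)$ is too lossy, and no amount of downstream bookkeeping can recover the constant $M_0$ in the statement. The quick sanity check is $\rho_h=0$: then $x^{k+1}=S_{t_k}(x^k)$, $M_0=2$, and the lemma must cover all $\delta\triangleq\Delta(x^k)/\|x_\star\|_2\le 1/\sqrt{2}$. But $(\star)$ together with your (correct) bound $\|S_{t_k}(x^k)-x^k\|_2^2\le\delta(2+\delta)\|x_\star\|_2^2$ forces
\[
\delta+\sqrt{\delta(2+\delta)}\le 1\quad\Longleftrightarrow\quad \delta\le \tfrac14,
\]
which fails throughout $(1/4,\,1/\sqrt{2}]$. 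The issue is structural: you pass from the target inequality $\|x^{k+1}-x_\star\|_2\le\|x^{k+1}+x_\star\|_2$ to the sufficient condition $(\star)$ via triangle inequalities that route through $\|x^{k+1}-x^k\|_2$, and that detour wastes a constant factor you cannot get back. The powers $\rho_h^{1/4}$ and $\rho_h^{3/4}$ in $M_0$ are another red flag: your $\sqrt{\rho_h}$ factor from strong convexity plus \hace{} can only generate half-integer powers.

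The paper instead bounds $\|x^{k+1}-x_\star\|_2$ directly, without introducing $\|x^{k+1}-x^k\|_2$ as an intermediate. It applies Lemma~\ref{descent_general} (the one-step inequality~\eqref{ineq:descent_general}) at $x=x^{k+1}$ with a free parameter $\beta\in(0,1]$, drops $F(x^{k+1})-F(x_\star)\ge 0$ and the nonpositive $(L/2-1/(2t_k))$ term, and uses \hace{} to control $\varepsilon_{t_k}(x^{k+1};x^k)$ by $\rho_h\|x^k-x^{k+1}\|_2^2/(2t_k)$. A Young/Cauchy split with a second free parameter $u'\in(0,1)$ then yields
\[
\Big(1-\beta-\frac{\rho_h}{\beta u'}\Big)\|x^{k+1}-x_\star\|_2^2\ \le\ \Big(2+\frac{\rho_h}{\beta(1-u')}\Big)\|x^k-x_\star\|_2^2.
\]
Choosing $u'=2\rho_h^{1/2}$ and $\beta=(\rho_h/4)^{1/4}$ makes the left coefficient equal to $1-\sqrt{2}\rho_h^{1/4}$ and the ratio exactly $M_0$, giving $\|x^{k+1}-x_\star\|_2^2\le M_0\|x^k-x_\star\|_2^2\le\|x_\star\|_2^2$, from which $\|x^{k+1}+x_\star\|_2\ge\|x^{k+1}-x_\star\|_2$ follows. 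The $\rho_h=0$ case is handled by letting $\beta\to 0^+$. This is where the quarter powers of $\rho_h$ actually originate, and why the argument must go through~\eqref{ineq:descent_general} rather than through $(\star)$.
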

\begin{proof}
Without loss of generality, we assume that $\Delta(x^k) = \|x^k-x_\star\|_2.$ We will split the proof into two {cases}. 

{\textbf{CASE 1: $\rho_h\in (0,1/4)$.}} {Consider} \eqref{ineq:descent_general} with $\beta\in (0,1]$ and $x = x^{k+1}$. Since \hace{} in \eqref{low-high-0} holds, we have $\beta^{-1}\varepsilon_{t_k}({x}^{k+1},x^k)\leq \beta^{-1}\rho_h\|x^k - x^{k+1}\|_2^2/(2 t_k)$. Moreover, noticing $L/2+1/(2t_k)\leq 1/t_k$, $L/2-1/(2t_k)\leq 0$ and {$F(x^{k+1}) - F(x_\star)\geq 0$}, {\eqref{ineq:descent_general} implies that}
$\frac{1-\beta}{2t_k}\|x^{k+1} - x_\star\|_2^2\leq \frac{1}{t_k}\|x^k - x_\star\|_2^2 + \frac{\beta^{-1}\rho_h}{2t_k}\|x^k - x^{k+1}\|_2^2$. Then, for any $u'\in (0,1)$, Cauchy-Schwarz inequality implies $(1-\beta)\|x^{k+1} - x_\star\|_2^2\leq 2\|x^k - x_\star\|_2^2$ $+ \beta^{-1}\rho_h\left(\frac{\|x^{k+1} - x_\star\|_2^2}{u'} + \frac{\|x^{k} - x_\star\|_2^2}{1-u'}\right)$, which further leads to
\begin{equation}\label{proof-lemma-exact-eq1}
{
\begin{aligned}
\Big(1-\beta -\frac{\rho_h}{\beta u'}\Big)\|x^{k+1} - x_\star\|_2^2 \leq \Big(2+\frac{\rho_h}{\beta (1-u')}\Big)\|x^k - x_\star\|_2^2.
\end{aligned}}%
\end{equation}
{Hence, setting} $u' = \sqrt{4\rho_h}\in (0,1)$ and $\beta = \sqrt{\rho_h/u'} = (\rho_h/4)^{1/4}\in (0,1)$, we have that 
$1-\beta -\rho_h/(\beta u') = {1-\sqrt{2}\rho_h^{1/4}}\in (0,1)$ as $\rho_h\in (0,1/4)$.
Thus, \eqref{proof-lemma-exact-eq1} indicates that
$\|x^{k+1} - x_\star\|_2^2 \leq \frac{2+\beta^{-1}\rho_h/(1-u')}{1-\beta-\rho_h/(\beta u')}\|x^k - x_\star\|_2^2 = M_0\|x^k - x_\star\|_2^2\leq \|x_\star\|_2^2.$ Thus, $\|x^{k+1} + x_\star\|_2\geq 2\|x_\star\|_2 - \|x^{k+1} - x_\star\|_2\geq \|x^{k+1} - x_\star\|_2$. This means that $x^{k+1}$ is closer to $x_\star$ 
{than} $-x_\star$. 

{\textbf{CASE 2: $\rho_h=0$.}} {
Note $\rho_h=0$ implies that} 
$x^{k+1} = S_{t_k}(x^k)$ and $\varepsilon_{t_k}(x^{k+1};x^k) = 0$. Thus, setting $x = x^{k+1}$ in \eqref{ineq:descent_general} and using the facts that $L/2+1/(2t_k)\leq 1/t_k, L/2-1/(2t_k)\leq 0, F(x^{k+1}) - F(x_\star)\geq 0$, we have
$\frac{1-\beta}{2t_k}\|x^{k+1} - x_\star\|_2^2\leq \frac{1}{t_k}\|x^k - x_\star\|_2^2$ for any $\beta\in (0,1]$. Letting $\beta\rightarrow {0^+}$, we have {$\|x^{k+1} - x_\star\|_2^2\leq 2\|x^k - x_\star\|_2^2\leq \|x_\star\|_2^2$ since $M_0=2$.} Thus, $\|x^{k+1} + x_\star\|_2\geq 2\|x_\star\|_2 - \|x^{k+1} - x_\star\|_2\geq \|x^{k+1} - x_\star\|_2$. This means that $x^{k+1}$ is closer to $x_\star$ 
{than} $-x_\star$. 
\end{proof}
\begin{remark}
{Later, when we invoke \Cref{lemma_same_closeness} with $\rho_h = 0$, we use ${M_0} = {2}$.}
\end{remark}

{Now we are ready to prove \Cref{lemma_path_traj_high}.}
\begin{proof}[Proof of Lemma \ref{lemma_path_traj_high}]
Without loss of generality, we assume $\Delta(x^k) = \|x^k-x_\star\|_2$. By \eqref{proof42_same_min} in Lemma \ref{lemma_same_closeness}, we know that $\Delta(x^{k+1}) = \|x^{k+1} - x_\star\|_2$. {Consider} \eqref{ineq:descent_general} with $x = x^{k+1}$ {and $\beta=\frac{1}{2}\in (0,1]$}. {Since \hace{} in \eqref{low-high-0} holds, we have} $\beta^{-1}\varepsilon_{t_k}(x^{k+1};x^k)\leq \beta^{-1}\rho_h\|x^k - x^{k+1}\|_2^2/(2 t_k)$. Using this inequality within \eqref{ineq:descent_general} together with $F(x^{k+1}) - F(x_\star)\geq \lambda_s\Delta(x^{k+1})$ due to \eqref{ineq:sharpness}, we get 
{
\begin{equation*}
\lambda_s\Delta(x^{k+1})+\frac{1}{4t_k}\Delta^2(x^{k+1})\leq \left(\frac{1}{2t_k}+\frac{L}{2}\right)\Delta^2(x^k)+\overbrace{\left(\frac{L}{2}-\frac{1-2\rho_h}{2t_k}\right)\|x^k-x^{k+1}\|_2^2}^{\triangleq\Gamma_k}.
\end{equation*}}%
Note that
$
{\Gamma_k} = \frac{L}{2}\|(x^k-x_\star) - (x^{k+1} - x_\star)\|_2^2 - \frac{1-2\rho_h}{2t_k}\|(x^k-x_\star) - (x^{k+1} - x_\star)\|_2^2$.
From \eqref{proof42_same_min}, we also have that
$\frac{L}{2}\|(x^k-x_\star) - (x^{k+1} - x_\star)\|_2^2\leq \frac{L}{2}(\Delta(x^k)+\Delta(x^{k+1}))^2$
and
$ - \frac{1-2\rho_h}{2t_k}\|(x^k-x_\star) - (x^{k+1} - x_\star)\|_2^2\leq -\frac{1-2\rho_h}{2t_k}(\Delta(x^k)-\Delta(x^{k+1}))^2$.
{Thus,}
\begin{equation}\label{eq2_proof_high}
{
\begin{aligned}
\Big(\lambda_s - \Big(L + \frac{1-2\rho_h}{t_k}\Big)\Delta(x^k)\Big)\Delta(x^{k+1}) +{\left(\frac{3-4\rho_h}{4t_k} - \frac{L}{2}
\right)}\Delta^2(x^{k+1})\leq \left(\frac{\rho_h}{t_k}+L\right)\Delta^2(x^{k}).
\end{aligned}}%
\end{equation}
{From the hypothesis we have} $\Delta(x^k)\leq \lambda_s/(4L)$, $t_k\in [2\Delta(x^k)/\lambda_s,~1/L]$ and $\rho_h\in (0,1/4)$; {therefore, 
we get}
$\lambda_s - L\Delta(x^k) - \frac{1-2\rho_h}{t_k}\Delta(x^k)\geq \lambda_s - \lambda_s/4 - \Delta(x^k)/t_k\geq \lambda_s/4$
and
${\frac{3-4\rho_h}{4t_k}} - \frac{L}{2} = \left(\frac{1}{2t_k} - \frac{L}{2}\right) + \left(\frac{1-4\rho_h}{4t_k}\right)\geq 0.$
{Using these relations within \eqref{eq2_proof_high}, we obtain}
$\lambda_s\Delta(x^{k+1})\leq \left(\frac{4\rho_h}{t_k}+4L\right)\Delta^2(x^{k})$; {hence, the proof is complete.}
\end{proof}

\subsection{Proofs of \Cref{iteration_low_sub} and \Cref{iteration_high_sub}}
{We first give an auxiliary result to discuss the scenario where the subproblems in~\eqref{PL} are solved exactly.}
\begin{lemma}\label{lemma_path_traj}
Suppose that Assumption \ref{ass:sharpness} holds. Let $k\in\mathbb{N}$.

(a) If $\Delta(x^k)\leq \lambda_s/(2L)$ and $x^{k+1} = S_{t_k}(x^k)$, then
\begin{equation}\label{eq_exact_a1}
{
\begin{aligned}
\Delta(x^{k+1})\leq (2L/\lambda_s)\Delta^2(x^k),\quad \forall~t_k\in[2\Delta(x^k)/\lambda_s, L^{-1}].
\end{aligned}}%
\end{equation}


(b) If $\Delta(x^{k})\leq \lambda_s/(4L)$, then for all $t_k\in(0, L^{-1}]$,
\begin{subequations}
{
\begin{align}
F_{t_{k}}(x^{k};x^{k}) - F_{t_{k}}(S_{t_{k}}(x^{k});x^{k}) &\geq \frac{5}{8}\lambda_s\min\Big\{1,~\frac{\lambda_st_k}{2\Delta(x^k)}\Big\}\Delta(x^{k}),\label{eq_update_F}\\
\|x^{k} - S_{t_{k}}(x^{k})\|_2 &\geq \frac{1}{2}\min\Big\{1,~\frac{\lambda_st_k}{2\Delta(x^k)}\Big\}\Delta(x^{k}). \label{eq_update_x}
\end{align}}%
\end{subequations}
\end{lemma}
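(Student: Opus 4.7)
The plan is to handle part (a) first via a refinement of the bound in \cref{descent_general}, and then derive both inequalities of part (b) by combining (a) with a fact that has already been established within the proof of \cref{lemma_traj_gen_tk}.

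For part (a), I would invoke \cref{descent_general} with $x=x^{k+1}=S_{t_k}(x^k)$, $\beta=1$, and $\varepsilon_{t_k}(x^{k+1};x^k)=0$ (since the subproblem is solved exactly), yielding
\begin{equation*}
F(x^{k+1})-F(x_\star)\leq \bigl(\tfrac{1}{2t_k}+\tfrac{L}{2}\bigr)\|x^k-x_\star\|_2^2+\bigl(\tfrac{L}{2}-\tfrac{1}{2t_k}\bigr)\|x^{k+1}-x^k\|_2^2.
\end{equation*}
Because $t_k\leq 1/L$, the cross-term coefficient is non-positive, so rather than discard it I would tighten via $\|x^{k+1}-x^k\|_2^2\geq (\Delta(x^k)-\Delta(x^{k+1}))^2$. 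Invoking \cref{lemma_same_closeness} with $\rho_h=0$ (for which $M_0=2$; its hypothesis holds because $\Delta(x^k)\leq \lambda_s/(2L)\leq \|x_\star\|_2/4$ via \cref{rel_lamL}) guarantees that $\Delta(x^k)$ and $\Delta(x^{k+1})$ are attained at the same signed minimizer, which legitimizes that triangle-inequality bound. Combining with sharpness $F(x^{k+1})-F(x_\star)\geq \lambda_s\Delta(x^{k+1})$, expanding the square, and collecting terms, the coefficient of $\Delta^2(x^{k+1})$ on the left becomes $\tfrac{1}{2t_k}-\tfrac{L}{2}\geq 0$ and can be discarded, leaving $[\lambda_s-(\tfrac{1}{t_k}-L)\Delta(x^k)]\Delta(x^{k+1})\leq L\Delta^2(x^k)$. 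Finally, $t_k\geq 2\Delta(x^k)/\lambda_s$ gives $(\tfrac{1}{t_k}-L)\Delta(x^k)\leq \lambda_s/2$, so the bracket is at least $\lambda_s/2$, producing \eqref{eq_exact_a1}.

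For inequality \eqref{eq_update_F} in part (b), I would reuse the proof-internal inequality \eqref{eq_proof_eq_low_decrease} derived inside the proof of \cref{lemma_traj_gen_tk}, which is obtained purely from sharpness, weak convexity, and $t_k\in(0,1/L]$, prior to any use of \lac{} or \hac{}, and which concerns the exact minimizer $S_{t_k}(x^k)$. Combining \eqref{eq_proof_eq_low_decrease} with sharpness $F(x^k)-F(x_\star)\geq \lambda_s\Delta(x^k)$ and the identity $F_{t_k}(x^k;x^k)=F(x^k)$ yields \eqref{eq_update_F} immediately. For \eqref{eq_update_x}, I would split into two regimes separated by $\tilde{t}_k\triangleq 2\Delta(x^k)/\lambda_s$ (which is at most $1/L$ since $\Delta(x^k)\leq \lambda_s/(4L)$). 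If $t_k\geq \tilde{t}_k$, part (a) gives $\Delta(S_{t_k}(x^k))\leq (2L/\lambda_s)\Delta^2(x^k)\leq \Delta(x^k)/2$, and \cref{lemma_same_closeness} again keeps the nearest signed minimizer fixed, so the triangle inequality yields $\|x^k-S_{t_k}(x^k)\|_2\geq \Delta(x^k)-\Delta(S_{t_k}(x^k))\geq \Delta(x^k)/2$. If $t_k<\tilde{t}_k$, I would appeal to the classical monotonicity property of the proximal operator: for any closed proper convex $g$, the map $t\mapsto \|\mathrm{prox}_{tg}(y)-y\|_2/t$ is non-increasing. Applying this to $g=F(\cdot;x^k)$ at $y=x^k$ and the pair $t_k\leq \tilde{t}_k$ gives $\|S_{t_k}(x^k)-x^k\|_2\geq (t_k/\tilde{t}_k)\,\|S_{\tilde{t}_k}(x^k)-x^k\|_2\geq (\lambda_s t_k/(2\Delta(x^k)))\cdot (\Delta(x^k)/2)=\lambda_s t_k/4$, matching the claim.

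\textbf{Expected main obstacle.} The sharp constant $2L/\lambda_s$ in \eqref{eq_exact_a1} is what demands the most care: a direct specialization of \cref{lemma_path_traj_high} to $\rho_h=0$ only produces $4L/\lambda_s$, and recovering the factor-of-two improvement forces one to retain, rather than discard, the negative cross term in \cref{descent_general} and pass it through $\|x^{k+1}-x^k\|_2^2\geq(\Delta(x^k)-\Delta(x^{k+1}))^2$. The bookkeeping then has to show that the resulting coefficient of $\Delta^2(x^{k+1})$ is non-negative and that the coefficient of $\Delta(x^{k+1})$ stays bounded below by $\lambda_s/2$ throughout the stepsize window $[2\Delta(x^k)/\lambda_s,1/L]$. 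The secondary obstacle is \eqref{eq_update_x} in the small-$t_k$ regime, where one must quote, rather than re-derive, the prox-quotient monotonicity for the merely-convex surrogate $F(\cdot;x^k)$.
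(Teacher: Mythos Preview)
Your proposal is correct and follows essentially the same route as the paper: part (a) via \cref{descent_general} with the cross term bounded through \cref{lemma_same_closeness}, \eqref{eq_update_F} via \eqref{eq_proof_eq_low_decrease} plus sharpness, and \eqref{eq_update_x} by combining part (a) for large $t_k$ with the prox-step monotonicity $\|x^k-S_{t_k}(x^k)\|_2\geq (t_k/\tilde t_k)\|x^k-S_{\tilde t_k}(x^k)\|_2$ for small $t_k$. The only cosmetic differences are that the paper takes $\beta\to 0$ (you take $\beta=1$, which works equally well since the resulting $\Delta^2(x^{k+1})$ coefficient is still non-negative and discarded) and that the paper re-derives the prox-quotient monotonicity from strong convexity of $F_{t}(\cdot;x^k)$ rather than citing it.
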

\begin{proof}
(\textbf{Part a}) Without loss of generality, we assume that $\Delta(x^k) = \|x^k-x_\star\|_2$.
{Since $\Delta(x^k)\leq \lambda_s/(2L)$, \Cref{rel_lamL} implies that $\Delta(x^k)\leq \|x_\star\|_2/\sqrt{2}$. Moreover, as $x^{k+1} = S_{t_k}(x^k)$, invoking Lemma \ref{lemma_same_closeness} with $\rho_h = 0$, for which case $M_0=2$ (hence, we have $\Delta(x^k)\leq \|x_\star\|_2/\sqrt{M_0}$), \eqref{proof42_same_min} implies that} $\mathbf{1}\left[\Delta(x^k) = \|x^k - x_\star\|_2\right] = \mathbf{1}\left[\Delta(x^{k+1}) = \|x^{k+1} - x_\star\|_2\right]$. Therefore, we have $\Delta(x^{k+1}) = \|x^{k+1}-x_\star\|_2$.
{Using} $\Delta(x^k) = \|x^k-x_\star\|_2$, $\Delta(x^{k+1}) = \|x^{k+1}-x_\star\|_2$, and $\frac{L}{2} - \frac{1}{2t_k}\leq 0$, we get
{
\begin{align*}
\left(\frac{L}{2} - \frac{1}{2t_k}\right)\|x^k - x^{k+1}\|_2^2 
\leq \left(\frac{L}{2} - \frac{1}{2t_k}\right)\left(\Delta(x^k) - \Delta(x^{k+1})\right)^2.
\end{align*}}%
{Next we use 
it within \eqref{ineq:descent_general} for $x=x^{k+1}$. Note that $\varepsilon_{t_k}(x^{k+1};x^k) = 0$ since $x^{k+1} = S_{t_k}(x^k)$, and we also have $F(x^{k+1}) - F(x_\star)\geq \lambda_s\Delta(x^{k+1})$ due to \eqref{ineq:sharpness}; thus,}
$\lambda_s\Delta(x^{k+1}) + \frac{1-\beta}{2t_k}\Delta^2(x^{k+1})\leq (\frac{L}{2}+\frac{1}{2t_k})\Delta^2(x^k) + (\frac{L}{2} - \frac{1}{2t_k})(\Delta(x^k) - \Delta(x^{k+1}))^2$. Letting $\beta\rightarrow 0$, we further get
{
$$(t_k^{-1}-L/2)\Delta^2(x^{k+1})+\left(\lambda_s-(t_k^{-1} - L)\Delta(x^{k})\right)\Delta(x^{k+1}) - L\Delta^2(x^k)\leq 0.$$}%
Note that $t_k^{-1} - L/2\geq t_k^{-1}/2$ and $(t_k^{-1} - L)\Delta(x^k)\leq \lambda_s/2$ since $L^{-1}\geq t_k\geq 2\Delta(x^k)/\lambda_s$; therefore, we have
$t_k^{-1}\Delta^2(x^{k+1})+\lambda_s\Delta(x^{k+1}) - 2L\Delta^2(x^k)\leq 0$, {which also implies that}
{
$$\Delta(x^{k+1})\leq 4L\Delta^2(x^{k})/\left(\sqrt{\lambda_s^2+8t_k^{-1}L\Delta^2(x^k)}+\lambda_s\right)\leq 2L\Delta^2(x^k)/\lambda_s.$$}%

(\textbf{Part b}) {Recall that \eqref{ineq:sharpness} implies $F(x^k) - F(x_\star)\geq \lambda_s \Delta(x^k)$. Moreover, \eqref{eq_proof_eq_low_decrease} holds since we assume $\Delta(x^k)\leq \lambda_s/(4L)$. These two relations directly lead to \eqref{eq_update_F} since $F_{t_k}(x^k;x^k)=F(x^k)$.} 
Next, we focus on the proof of \eqref{eq_update_x}. Without loss of generality, we assume that $\|x^k - x_\star\|_2 = \Delta(x^k)$. 

First, we 
consider $t_k\in [2\Delta(x^k)/\lambda_s,L^{-1}]$. {From \Cref{rel_lamL}, we get $\Delta(x^k) \leq \lambda_s/(4L)\leq \|x_\star\|_2/8$. Therefore, \Cref{lemma_same_closeness}
with $\rho_h = 0$ and $M_0=2$ implies that $\Delta(S_{t_k}(x^k))=\|S_{t_k}(x^k) - x_\star\|_2$ since $\Delta(x^k)\leq \|x_\star\|_2/\sqrt{M_0}$.} In addition, {since $\Delta(x^k) \leq \lambda_s/(4L)$,} \eqref{eq_exact_a1} implies
$\Delta(x^{k+1})\leq \frac{2L}{\lambda_s}\cdot\frac{\lambda_s}{4L}\Delta(x^k) = \Delta(x^k)/2$; hence,
\begin{equation}\label{eq00_lower_bound_quant1}
\|x^k - S_{t_k}(x^k)\|_2 \geq \|x^k - x_\star\|_2 - \|S_{t_k}(x^k) - x_\star\|_2\geq \Delta(x^k)/2.
\end{equation}
Next, we 
{consider $t_k\in(0, t')$ where} ${t'} \triangleq 2\Delta(x^k)/\lambda_s$. By strong convexity of $F_{t'}(\cdot;x^k)$ and $F_{t_k}(\cdot;x^k)$ defined in \eqref{def-Ft}, 
$F_{t'}(S_{t_k}(x^k);x^k) - F_{t'}(S_{t'}(x^k);x^k)\geq \frac{1}{2 t'}\|S_{t_k}(x^k) - S_{t'}(x^k)\|_2^2$
and
$F_{t_k}(S_{t'}(x^k);x^k) - F_{t_k}(S_{t_k}(x^k);x^k)\geq \frac{1}{2t_k}\|S_{t_k}(x^k) - S_{t'}(x^k)\|_2^2$; hence,
for $u \triangleq x^k - S_{t'}(x^k)$ and $v \triangleq x^k - S_{t_k}(x^k)$, 
{
$$F(S_{t_k}(x^k);x^k) - F(S_{t'}(x^k);x^k)+\frac{1}{2t'}(\|v\|_2^2 - \|u\|_2^2)\geq \frac{1}{2t'}\|u-v\|_2^2,$$
$$F(S_{t'}(x^k);x^k) - F(S_{t_k}(x^k);x^k) + \frac{1}{2t_k}(\|u\|_2^2 - \|v\|_2^2)\geq \frac{1}{2t_k}\|u-v\|_2^2.$$}%
Adding these two inequalities, we have
\begin{equation}\label{eq_comp_update_stepsize}
{
\begin{aligned}
\left(\frac{1}{2t_k} - \frac{1}{2t'}\right)(\|u\|_2^2 - \|v\|_2^2)\geq \left(\frac{1}{2t_k} + \frac{1}{2t'}\right)\|u - v\|_2^2,
\end{aligned}}%
\end{equation}
{which further implies that} $\|u\|_2\geq \|v\|_2$. {Next, we consider two cases: $\norm{u}_2=\norm{v}_2$ (\textbf{case 1}), and $\norm{u}_2>\norm{v}_2$ (\textbf{case 2}). First, we consider (\textbf{case 1}), i.e.,} 
$\|x^k - S_{t'}(x^k)\|_2 = \|u\|_2 = \|v\|_2 = \|x^k - S_{t_k}(x^k)\|_2$. Note that \eqref{eq00_lower_bound_quant1} implies that $\|x^k - S_{t'}(x^k)\|_2\geq \Delta(x^k)/2$ since $t'=2\Delta(x^k)/\lambda_s$; therefore, 
\begin{equation}\label{eq_temp_size_update}
\|x^k - S_{t_k}(x^k)\|_2 = \|x^k - S_{t'}(x^k)\|_2\geq \Delta(x^k)/2,
\end{equation}
implying that \eqref{eq_update_x} holds. Second, we consider (\textbf{case 2}). {Since} $\|u\|_2 > \|v\|_2$, \eqref{eq_comp_update_stepsize} implies
$(t'-t_k)(\|u\|_2^2 - \|v\|_2^2)\geq (t'+t_k)\|u - v\|_2^2\geq (t'+t_k)(\|u\|_2 - \|v\|_2)^2$. Thus,
$$(t'-t_k)(\|u\|_2 + \|v\|_2)\geq (t'+t_k)(\|u\|_2 - \|v\|_2),$$ which gives $\norm{v}_2\geq \frac{t_k}{t'}\norm{u}_2$, i.e.,
\begin{equation}
\label{eq00_lower_bound_quant2}
\|x^k - S_{t_k}(x^k)\|_2\geq \frac{1}{2}t_k\lambda_s\|x^k - S_{t'}(x^k)\|_2/\Delta(x^k),
\end{equation}
where we used $t' = 2\Delta(x^k)/\lambda_s$. Similar to (\textbf{case 1}), we have $\|x^k - S_{t'}(x^k)\|_2\geq \Delta(x^k)/2$ due to \eqref{eq00_lower_bound_quant1}. Therefore, using this relation within \eqref{eq00_lower_bound_quant2}, we get $\|x^k - S_{t_k}(x^k)\|_2\geq \frac{1}{4}t_k\lambda_s$.
{Combining this bound with \eqref{eq00_lower_bound_quant1} and \eqref{eq_temp_size_update} completes the proof of \eqref{eq_update_x}.}
\end{proof}

Next, we provide the proofs for Lemmas \ref{iteration_low_sub} and \ref{iteration_high_sub}.

\begin{proof}[Proof of Lemma \ref{iteration_low_sub}]{Given $k\geq 0$ such that $\Delta(x^k)\leq\lambda_s/(4L)$,}
\Cref{bound_b2} shows that $\|B_k\|_2\leq B(\frac{\lambda_s}{4L})$. {Let $j\in\mathbb{N}$ such that  
$j\geq \max\left\{0,\left\lceil 
{M^{\bf LAC}_k}\right\rceil-2\right\}$.} 
From \eqref{ineq_sub_rate}, 
{
$$H_k(z_{j+1}^k) - D_k(\lambda_{j+1}^k)\leq \frac{C_0t_km\|B_k\|_2^2}{(j+2)^2}\leq \frac{\rho_l}{1+\rho_l}\frac{5}{8}\lambda_s\min\{1,~\lambda_st_k/(2\Delta(x^k))\}\Delta(x^k).$$}%
By \eqref{eq_update_F}, 
we have $H_k(z_{j+1}^k) - D_k(\lambda_{j+1}^k)\leq \frac{\rho_l}{1+\rho_l}\left(H_k(\mathbf{0}) - \min_{z\in\mathbb{R}^n} H_k(z)\right)$. Thus, 
{
\begin{align*}
H_k(z_{j+1}^k) - D_k(\lambda_{j+1}^k)
&\leq  \rho_l\Big(-H_k(z^k_{j+1}) +D_k(\lambda^k_{j+1}) + H_k(\mathbf{0}) - \min_{z\in\mathbb{R}^n} H_k(z)\Big),
\end{align*}}%
{which further implies that $H_k(z_{j+1}^k) - D_k(\lambda_{j+1}^k)\leq  \rho_l\Big(H_k(\mathbf{0}) - H_k(z^k_{j+1})\Big)$,
where we used weak duality, i.e.,} $D_k(\lambda_k^{j+1})- \min_{z\in\mathbb{R}^n} H_k(z)\leq 0$. Therefore, \lac{} in \eqref{eq:low-high-practical} holds within
$N_k$ inner iterations and $N_k \leq \max\{1,{M^{\bf LAC}_k}\}$. Next, we will show that 
\begin{equation}\label{eq00_subiter_number}
	M^{\bf LAC}_k\geq 1.
\end{equation}
We have $$(M^{\bf LAC}_k)^2 = \frac{8C_0m B^2(\lambda_s/(4L))}{5\lambda_s^2}\frac{(1+\rho_l)}{\rho_l}(\min\{\Delta(x^k)/(t_k\lambda_s),1/2\})^{-1}.$$
Noticing that $(1+\rho_l)/\rho_l\geq 1, (\min\{\Delta(x^k)/(t_k\lambda_s),1/2\})^{-1}\geq 2$, $B(\lambda_s/(4L))\geq B(0)$ based on the definition in \eqref{bound_b2}, and $C_0\geq 2$ given in \eqref{ineq_sub_rate}, we have that
$(M^{\bf LAC}_k)^2\geq \frac{16mB^2(0)}{5\lambda_s^2}.$
We further have that $B(0) = \frac{2}{m}\|A\|_2\|x_\star\|_2\max_{i\in [m]}\|a_i\|_2\geq \frac{2}{m}\|A\|_2\|x_\star\|_2\times \frac{\|A\|_2}{\sqrt{m}} = \frac{L\|x\|_\star}{\sqrt{m}}$. Here, the inequality holds because $\max_{i\in [m]}\|a_i\|_2^2\geq \frac{1}{m}\sum_{i=1}^m \|a_i\|_2^2 = \|A\|_F^2/m\geq \|A\|_2^2/m$. Thus,
\begin{equation}\label{eq_itersub_temp}
	\frac{16mB^2(0)}{5\lambda_s^2}\geq \frac{64}{5}\left(L\|x_\star\|_2/(2\lambda_s)\right)^2\geq 64/5.
\end{equation}
Here, the last inequality follows from Lemma \ref{rel_lamL}. Thus, \eqref{eq00_subiter_number} holds, and we can conclude that $\mathbb{N}_+\ni N_k\leq {M^{\bf LAC}_k}$. 
\end{proof}

\begin{proof}[Proof of Lemma \ref{iteration_high_sub}]
{Given $k\geq 0$ such that $\Delta(x^k)\leq\lambda_s/(4L)$,}
\Cref{bound_b2} shows that $\|B_k\|_2\leq B(\frac{\lambda_s}{4L})$. {Let $j\in\mathbb{N}$ such that  
$j\geq \max\left\{0,\left\lceil {M^{\bf HAC}_k}\right\rceil-2\right\}$.} 
From \eqref{ineq_sub_rate}, 
{
$$H_k(z_{j+1}^k) - D_k(\lambda_{j+1}^k)\leq \frac{C_0t_km\|B_k\|_2^2}{(j+2)^2} \leq \frac{\rho_h}{1+\rho_h}\frac{1}{4t_k}\left(\frac{1}{2}\min\{1,~\lambda_st_k/(2\Delta(x^k))\}~{\Delta(x^k)}\right)^2.$$}%
By \eqref{eq_update_x}, we have 
\begin{equation}\label{eq_proof_iter_high_temp}
{
\begin{aligned}
H_k(z_{j+1}^k) - D_k(\lambda_{j+1}^k)\leq \frac{\rho_h}{1+\rho_h}\frac{1}{4t_k}\|x^k-S_{t_k}(x^k)\|_2^2.
\end{aligned}}%
\end{equation}
For $z^k_{\star} \triangleq \argmin_{z\in\mathbb{R}^n} H_k(z)$, we have $\|z^k_{\star}\|_2 = \|x^k - S_{t_k}(x^k)\|_2$; moreover, using $\frac{1}{t_k}$-strong convexity of $H_k(\cdot)$ and weak duality together, we obtain
$\frac{1}{2t_k}\|z^k_{j+1} - z^k_{\star}\|_2^2\leq H_k(z^k_{j+1}) - \min_{z\in\mathbb{R}^n} H_k(z)\leq H_k(z^k_{j+1}) - D_k(\lambda_k^{j+1})$.
Therefore, \eqref{eq_proof_iter_high_temp} implies that $\|z^k_{j+1} - z^k_{\star}\|_2^2\leq \rho_h/\left(2(1+\rho_h)\right)\|z^k_{\star}\|_2^2.$
By the Cauchy-Schwarz inequality, we further have 
\begin{align*}
\frac{\rho_h}{2t_k}\|z^k_{j+1}\|_2^2  \geq \frac{\rho_h}{4t_k}\|z^k_{\star}\|_2^2- \frac{\rho_h}{2t_k}\|z^k_{j+1} - z^k_{\star}\|_2^2
\geq \frac{\rho_h}{4t_k}\|z^k_{\star}\|_2^2- \frac{\rho_h^2}{4t_k(1+\rho_h)}\|z^k_{\star}\|_2^2 = \frac{\rho_h}{4t_k(1+\rho_h)}\|z^k_{\star}\|_2^2.
\end{align*}%
Therefore, \hac{} in \eqref{eq:low-high-practical} holds within
$N_k \leq \max\{1,{M^{\bf HAC}_k}\}$ inner iterations. Moreover, it can be shown that ${M^{\bf HAC}_k}\geq 1$. Moreover, similar to the proof of \eqref{eq00_subiter_number}, it can be shown that ${M^{\bf HAC}_k}\geq 1$; therefore, we can conclude $\mathbb{N}_+\ni N_k\leq {M^{\bf HAC}_k}$, which completes the proof.
\end{proof}

Finally, we are ready to provide the proofs for Theorems \ref{overall_complexity_diminish_stepsize_gen} and \ref{overall_complexity_diminish_stepsize}.
\subsection{Proof of~\Cref{overall_complexity_diminish_stepsize_gen}}
(\textbf{Part a}) By Lemma~\ref{sufficiency_pd}, {\lac{} implies that \lace{} in \eqref{low-high-0} also holds for all} $k\in\mathbb{N}$. As $\Delta(x^0)> \epsilon$, {we have $\mathbb{N}_+\ni K_\epsilon\geq 1$}.
Next, we use induction to show the following relations hold simultaneously for $k\in\mathbb{N}$:\looseness=-5
\begin{subequations}
\label{eq:induction-LAC}
\begin{align}
F(x^{k}) - F(x_\star)&\leq \left(C(\rho_l)\right)^k\big(F(x^0) - F(x_\star)\big),\label{proof_thm_46_eq0}\\
\Delta(x^k)&\leq \min\{\lambda_s/(4L),(g_HL)^{-1}\},\label{proof_thm_46_eq00}\\
\exists~g_k&\in[g_L,g_H]:\ t_k = g_k\Delta(x^k), \label{proof_thm_46_eq000}
\end{align}
\end{subequations}
where $C(\rho)=1 - \frac{5\min\{\lambda_sg_L/2,1\}}{8(1+\rho)}$ for $\rho>0$. For $k=0$, \eqref{proof_thm_46_eq0} 
holds. Moreover, by \Cref{lip_F2}, we have
$F(x^0) - F(x_\star)\leq \min\{\lambda_s^2/(4L),\lambda_s(g_HL)^{-1}\}$ and
$\Delta(x^0)\leq \min\{\lambda_s/(4L),(g_HL)^{-1}\}$. Therefore, \eqref{proof_thm_46_eq00} holds for $k=0$, {which also implies that} $g_0\Delta(x^0)\leq g_H\Delta(x^0)\leq L^{-1}$ {since $g_0\leq g_H$}. {Recall that according to \Cref{prop_alpha_and_tk}(b), we have $t_0=\min\{g_0\Delta(x^0),L^{-1}\}$ since} $\Delta(x^0)\leq \|x_\star\|_2$ due to Lemma \ref{rel_lamL}. Thus, \eqref{proof_thm_46_eq000} holds for $k=0$ as well. Next, we assume that \eqref{eq:induction-LAC} holds for some $k\geq 0$. {Together with \eqref{proof_thm_46_eq00}, \eqref{proof_thm_46_eq000} and ${g_{k}}\geq g_L$, \Cref{lemma_traj_gen_tk}(a) implies $F(x^{k+1}) - F(x_\star)\leq \left(1 - \frac{5\min\{\lambda_sg_L/2,1\}}{8(1+\rho_l)}\right)\big(F(x^k) - F(x_\star)\big)$; hence, using it with \eqref{proof_thm_46_eq0} establishes \eqref{proof_thm_46_eq0} for $k+1$. Since 
$F(x^{k+1}) - F(x_\star)\leq F(x^0) - F(x_\star)\leq \min\{\lambda_s^2/(4L),\lambda_s/(g_HL)\}$ holds, using \eqref{ineq:sharpness} we get 
\eqref{proof_thm_46_eq00} for $k+1$.} Finally, \Cref{prop_alpha_and_tk}(b) implies that \eqref{proof_thm_46_eq000} holds for $k+1$ as well because $g_{k+1}\Delta(x^{k+1})\leq g_H\Delta(x^{k+1})\leq L^{-1}$ and {$\Delta(x^{k+1})\leq \|x_\star\|_2$ due to Lemma \ref{rel_lamL}.} This completes the induction.

By Lemma \ref{iteration_low_sub} together with \eqref{proof_thm_46_eq00} and \eqref{proof_thm_46_eq000}, we have
\begin{equation}\label{eq_low_iter_upper}
{
\begin{aligned}
\sup_{k\in\mathbb{N}} N_k\leq \sup_{k\in\mathbb{N}} M_1\sqrt{\frac{g_k(1+\rho_l)}{\lambda_s\rho_l\min\{1,\lambda_sg_k/2\}}}\leq M_1\sqrt{\frac{g_H(1+\rho_l)}{\lambda_s\rho_l\min\{1,\lambda_sg_H/2\}}}.
\end{aligned}}%
\end{equation}
{
W.L.O.G suppose $\Delta(x^0)=\norm{x^0-x_\star}_2$. Due to \Cref{rel_lamL}, $\Delta(x^0)\leq \lambda_s/(4L)\leq \|x_\star\|_2$; hence, using \Cref{Lip_F} with $r=\norm{x_\star}_2$, we get $F(x^0) - F(x_\star)\leq 2\Delta(x^0)L\|x_\star\|_2$ since $\max\{\Delta(x^0),~\Delta(x^*)\}\leq \norm{x^*}_2$. Recall that $K_\epsilon=\inf\{k\in\mathbb{N}_+:~\Delta(x^k)\leq \epsilon\}$; therefore,} from \eqref{ineq:sharpness}, $F(x^{K_\epsilon-1}) - F(x_\star)\geq \lambda_s\Delta(x^{K_\epsilon-1})\geq \lambda_s\epsilon$. Thus, by \eqref{proof_thm_46_eq0}, 
$\left(C(\rho_l)\right)^{K_\epsilon - 1}\geq \lambda_s \epsilon/\left(2\Delta(x^0)L\|x_\star\|_2\right)$, implying
\begin{equation}
\label{eq:K-epsilon}
{
\begin{aligned}
{K_\epsilon\leq \log\Big(\frac{2\Delta(x^0)L\|x_\star\|_2}{\lambda_s\epsilon C(\rho_l)}\Big)/\log\Big(\frac{1}{C(\rho_l)}\Big).}
\end{aligned}}%
\end{equation}
Since $N(\epsilon)\leq K_\epsilon \sup_{k\in\mathbb{N}} N_k$, using \eqref{eq_low_iter_upper}, we get the desired result for part (a).

(\textbf{Part b}) By Lemma~\ref{sufficiency_pd}, {\hac{} implies that \hace{} in \eqref{low-high-0} also holds for all} $k\in\mathbb{N}$.
Note that \eqref{eq_proof_high_temp2} 
implies that \lace{} in \eqref{low-high-0} holds for any $k\in\mathbb{N}$ for $\rho_l = \bar{\rho} \triangleq 2\rho_h/(1-4\rho_h)$. Thus, similar to the proof of (\textbf{Part a}), \eqref{proof_thm_46_eq0}, \eqref{proof_thm_46_eq00} and \eqref{proof_thm_46_eq000} with $\rho_l = \bar{\rho}$ hold for any $k\in\mathbb{N}$, {which implies that \eqref{eq:K-epsilon} holds for $\rho_l=\bar{\rho}$.}
Moreover, from Lemma \ref{iteration_high_sub} together with \eqref{proof_thm_46_eq00} and \eqref{proof_thm_46_eq000}, we get
{
$$\sup_{k\in\mathbb{N}} N_k\leq \sup_{k\in\mathbb{N}} \frac{M_2g_k}{\min\{1,\lambda_sg_k/2\}}\sqrt{\frac{1+\rho_h}{\rho_h}}\leq \frac{M_2g_H}{\min\{1,\lambda_sg_H/2\}}\sqrt{\frac{1+\rho_h}{\rho_h}}.$$}%
Since $N(\epsilon)\leq K_\epsilon \sup_{k\in\mathbb{N}} N_k$, we get the desired result for part (b).
\subsection{Proof of \Cref{overall_complexity_diminish_stepsize}}
By Lemma~\ref{sufficiency_pd}, {\hac{} implies that \hace{} in \eqref{low-high-0} also holds for all} $k\in\mathbb{N}$. We use induction to show the following relations hold simultaneously for $k\in\mathbb{N}$:\looseness=-5
\begin{subequations}
\label{eq:induction-HAC}
\begin{align}
\Delta(x^{k}) &\leq (6\rho_h/(\lambda_sg_L))^k \Delta(x^0),\label{proof_thm47_eq0}\\
\Delta(x^k)&\leq \min\{\rho_h/(2Lg_L),~\|x_\star\|_2/\sqrt{M_0},~1/(g_HL)\},\label{proof_thm47_eq000}\\
\exists g_k &\in[g_L,g_H]:\ t_k = g_k\Delta(x^k).\label{proof_thm47_eq00}
\end{align}
\end{subequations} 
{For $k=0$, \eqref{proof_thm47_eq0} trivially holds and \eqref{proof_thm47_eq000} is true due to hypothesis.} Moreover, since $\rho_h\in (0,1/4)$ and $g_L\geq 2/\lambda_s$, we have $\Delta(x^0)\leq \rho_h/(2Lg_L)\leq \lambda_s/(16L)$; hence, 
\Cref{rel_lamL} implies that $\Delta(x^0)\leq \|x_\star\|_2$. Thus, by \Cref{prop_alpha_and_tk}(b), we have $t_0 = \{g_0\Delta(x^0),L^{-1}\}$ {for some $g_0\in[g_L,g_H]$.} In addition, {\eqref{proof_thm47_eq000} implies that} $g_0\Delta(x^0)\leq g_H\Delta(x^0)\leq L^{-1}$, {which leads to \eqref{proof_thm47_eq00} for $k=0$. Thus, the base case ($k=0$) for induction holds. Next, we assume that \eqref{eq:induction-HAC} holds for some $k\in\mathbb{N}$, and we prove that it also holds for $k+1$. The induction hypothesis on \eqref{proof_thm47_eq0} implies that $\Delta(x^k)\leq \Delta(x^0)$. Thus, $\Delta(x^k)\leq \min\{\lambda_s/(4L),~\norm{x_\star}_2/\sqrt{M_0},~1/(g_H L)\}$, and $t_k\in[2\Delta(x^k)/\lambda_2,~L^{-1}]$ since $g_L\geq 2/\lambda_s$ and $\Delta(x^0)\leq \frac{1}{g_H L}$. Therefore,} \Cref{lemma_path_traj_high} together with \eqref{proof_thm47_eq00} 
implies that
$\lambda_s\Delta(x^{k+1})\leq (4\rho_h/g_k+4L\Delta(x^{k}))\Delta(x^{k}).$ By $g_k\geq g_L$ and $\Delta(x^{k})\leq \rho_h/(2Lg_L)$, we further have
$$\lambda_s\Delta(x^{k+1})\leq (4\rho_h/g_L+4L\rho_h/(2Lg_L))\Delta(x^{k}) = 6\rho_h\Delta(x^k)/g_L.$$
Together with \eqref{proof_thm47_eq0} for $k$, this proves \eqref{proof_thm47_eq0} for $k+1$. Moreover, {observing that the rate coefficient in \eqref{proof_thm47_eq0} satisfies}
$\frac{6\rho_h}{\lambda_sg_L}\leq \frac{3}{4}$ as $g_L\geq 2/\lambda_s$ and $\rho_h\in (0,1/4)$, we also get $\Delta(x^{k+1})\leq 3\Delta(x^k)/4$. Thus, \eqref{proof_thm47_eq000} for $k$ immediately implies that \eqref{proof_thm47_eq000} also holds for $k+1$. Finally, 
{since $\Delta(x^{k+1})\leq \Delta(x^0)\leq \lambda_s/(4L)\leq \|x_\star\|_2$, according to \Cref{prop_alpha_and_tk}(b),} we have $t_{k+1} = \min\{g_{k+1}\Delta(x^{k+1}),L^{-1}\}$ {for some $g_{k+1}\in[g_L,g_H]$, and} $\Delta(x^{k+1})\leq (g_H L)^{-1}$ due to \eqref{proof_thm47_eq000} shows that \eqref{proof_thm47_eq00} holds for $k+1$, establishing the induction.
{Since \eqref{proof_thm47_eq00} holds for all $k\in\mathbb{N}$, we have $2\Delta(x^k)/\lambda_s\leq g_L\Delta(x^k)\leq t_k\leq g_H \Delta(x^k)$ for $k\geq 0$ since $g_L\geq 2/\lambda_s$. Using these inequalities within the bound for $N_k$ in}
\Cref{iteration_high_sub}, we get
$\sup_{k\in\mathbb{N}} N_k\leq M_2g_H\sqrt{\frac{1+\rho_h}{\rho_h}}$.
As $\Delta(x^{K_\epsilon - 1})> \epsilon$, \eqref{proof_thm47_eq0} implies 
$\left(\frac{6\rho_h}{\lambda_s g_L}\right)^{K_\epsilon - 1}\geq \frac{\epsilon}{\Delta(x^0)}$; thus,
$K_\epsilon\leq \log \Big(\frac{\lambda_s\Delta(x^0)g_L}{6\rho_h}\cdot\frac{1}{\epsilon}\Big)/\log\Big(\frac{\lambda_sg_L}{6\rho_h}\Big)$.
Since $N(\epsilon)\leq K_\epsilon \sup_{k\in\mathbb{N}} N_k$, 
we get the desired result.

\section{\tcb{Numerical Experiments}}\label{sec:main_numerical}
In this section, we conduct numerical experiments on the RPR problem in \eqref{duchi_l1_ori}. We tested the following algorithms:
\begin{itemize}
\item[(i)] {\textbf{\gsub}}: The subgradient method with geometrically decaying step sizes is proposed in \cite{davis2018subgradient}
for solving 
\eqref{duchi_l1_ori}\tcb{---}\gsub{} updates are stated in~\eqref{descent_subgradient_geome}. We picked the best performing contraction coefficient $q\in(0,1)$ among all the values tested in \cite{davis2018subgradient}, i.e., $q\in\{0.983, 0.989, 0.993, 0.996, 0.997\}$; on the other hand, \cite{davis2018subgradient} does not specify how to choose $\lambda_0$, and we set $\lambda_0 = 0.1\|x^0\|_2$.
\item[(ii)] \ipl{}\texttt{-LAC}, \ipl{}\texttt{-HAC}: 
\ipl{} algorithm with fixed step sizes $t_k = 1/L$ and the subproblems in \eqref{PL} are solved using \apg{}. ``\emph{\texttt{-LAC}}" and ``\emph{\texttt{-HAC}}"
indicate whether (\textbf{LAC}) or (\textbf{HAC}) is used.
\item[(iii)] \adaipl{}\texttt{-LAC}, \adaipl{}\texttt{-HAC}: 
These methods, stated in~Algorithm \ref{alg:adaptive-IPL}, use adaptive step sizes given in \eqref{choice_t_diminish}. The subproblems \eqref{PL} are solved using \apg{}, which performed better 
compared to \apd{} in our tests. 
``\emph{\texttt{-LAC}}" and ``\emph{\texttt{-HAC}}"
indicate whether (\textbf{LAC}) or (\textbf{HAC}) is used.
\item[(iv)] \textbf{\adasub{}}: {It is the subgradient method, stated in Algorithm \ref{alg:adaptive-sub},} with adaptive step sizes given in \eqref{choice_alpha_subg} and \eqref{iter_adasub}.
\end{itemize}
\sa{Rather than the original proximal linear algorithm proposed in \cite{duchi2019solving}, we tested our proposed algorithms against \ipl{}\texttt{-LAC} and \ipl{}\texttt{-HAC} from \cite{zheng2023new} as these methods have already demonstrated better performances when compared to the prox-linear method~\cite{duchi2019solving}.} \tcb{We conduct some preliminary tests with \texttt{Robust-AM}~\cite{kim2024robust} which iteratively uses ADMM to inexactly solve subproblems of the form $x^{k+1} = \argmin_{x\in\mathbb{R}^n} \frac{1}{m}\sum_{i=1}^m |(a_i^\top x) - \mbox{sign}(a_i^\top x^k)\sqrt{b_i}|$; and on these tests we found \texttt{Robust-AM} inefficient for computing high-accuracy solution---it was an order of magnitude order slower than the other competing methods, that is why we did not include it in our tests reported below.} We did not test \psub{} as well because according to~\cite{davis2020nonsmooth}, it only works for the noiseless setting. We will always let $\tilde{p} = 1/2$ for \adasub{} and \adaipl{}. All the methods are initialized from the same point $x^0$, which is generated by \cite[Algorithm 3]{duchi2019solving}.
\subsection{\tcb{Selecting $G$ for \adasub{} and \adaipl{}}}
\label{sec:G-selection}
We provide practical guidance on choosing $G$. 

For \adasub{}, 
\sa{our goal is} to approximate $F(x^k) - F(x_\star)$ used in \pl{} from \cite{davis2020nonsmooth} with $Gr^{\tilde{p}}(x^k)$ \sa{for $\tilde p=1/2$}. 
\sa{Since $F(x^k) - F(x_\star)$ 
is equal to the mean of $\{r_i(x^k) - r_i(x_\star)\}_{i=1}^m$, $r^{\tilde{p}}(x^k)$ for $\tilde p = 1/2$ represents the median of 
$\{r_i(x^k)\}_{i=1}^m$,} and $r_i(x_\star) = 0$ holds for most $i\in [m]$, 
\sa{it is highly likely} that $(F(x^k) - F(x_\star))/r^{\tilde{p}}(x^k)$ is not far from $1$. Thus, we can let $G$ be close to 1, 
\sa{relying on this observation}. We also choose a slightly smaller value for conservativeness, since overly large $G$ might lead to divergence, whereas overly small $G$ will not. For \adaipl{}, \cref{overall_complexity_diminish_stepsize_gen,overall_complexity_diminish_stepsize} imply that the ideal choice for $t_k$ is $2\Delta(x^k)/\lambda_s$ when $\Delta(x^k)$ is small. Thus, we wish $Gr^{\tilde{p}}(x^k)$ to be close to $2\Delta(x^k)/\lambda_s$, i.e., $G$ should be close to $2\Delta(x^k)/(\lambda_s r^{\tilde{p}}(x^k))$. Under the same belief that we discussed for \adasub{}, we wish that $G$ is close to $2\Delta(x^k)/(\lambda_s (F(x^k) - F(x_\star)))$.

Next, we discuss the conservativeness in choosing $G$ for \adaipl{}. Since the definition of $t_k$ given in \eqref{choice_t_diminish} provides an upper bound $L^{-1}$, it is relatively safe to use a large $G$ since the total iteration complexity under $G = \infty$ will degrade to that of \ipl{}, which still guarantees a sublinear convergence rate. On the contrary, as shown in Table \ref{tab:imperfect_situation}, the efficiency can be arbitrarily worse if we use an overly small $G$. Thus, we prefer a relatively large $G$ to a potentially small one when uncertainty arises.

Thus, noticing that $2\Delta(x^k)/(\lambda_s (F(x^k) - F(x_\star)))\leq 2/\lambda_s^2$ due to Assumption \ref{ass:sharpness} (sharpness) and \eqref{eq:kappa0} implies that $\lambda_s = L\|x_\star\|_2/(2\kappa_0)$, following the conservativeness, we wish that $G$ is close to $8\kappa_0^2/(L^2\|x_\star\|_2^2)$. Here, $\kappa_0\geq 1$ is the condition number of the robust phase retrieval problem. 
$L = 2\|A\|_2^2/m$ is known. $\|x_\star\|_2$ can be approximated by $\|x^0\|_2$ where $x^0$ is the initializer discussed above --see \cite[Section 4.2]{duchi2019solving} for the validity of this approximation. Thus, we can choose $G$ as $8\tilde{G}/(L^2\|x^0\|_2^2)$. Here, $\tilde{G}>0$ reflects our belief on $\kappa_0^2$. It should be large when we expect severe ill-conditioning ($\kappa_0$ is much larger than 1), which occurs under a relatively large $p_{\mathrm{fail}}$ or a large condition number of $A^\top A$. Otherwise, it should be close to $1$. As discussed above, we can use a relatively large $\tilde{G}$ for conservativeness. 
\subsection{Synthetic Data}
\label{sec:synthetic}
We generate synthetic data 
as in~\cite{duchi2019solving} and \cite{zheng2023new}. Specifically, $a_i$'s are drawn randomly from the normal distribution $\mathcal{N}(0,\mbox{diag}([s_1,s_2\ldots s_n]))$ 
where $s_i=1-0.75\frac{i-1}{n-1}$ for $i\in[n]$. Throughout the experiments in this subsection,  we set $n = 1500$ and we choose $m$ such that \tcb{$m/n\in\{5,6,7,8\}$}. The entries of $x_\star \in\reals^n$ are drawn uniformly at random from $\{-1,1\}$. 
{For each value of $m$ tested, the index set $\mathcal{I}_2$ for corrupted measurements is generated by random sampling $\lceil m~p_{\rm fail} \rceil$ elements from $\{1,2,\ldots,m\}$ without replacement, where $p_{\rm fail}\in\tcb{\{0.1,0.2\}}$. Corrupted measurements $b_i=\xi_i$ for $i\in\cI_2$
are independently drawn from the Cauchy distribution, i.e.,}
$b_i=\xi_i=\tilde{M}\tan \left(\frac{\pi}{2}U_i\right)$ and $U_i\sim U(0,1)$ for all $i\in \mathcal{I}_{2}$, where $\tilde{M}$ is the sample median of $\{(a_i^\top x_\star)^2\}_{i=1}^m$. 
For a given threshold $\epsilon>0$, we call an algorithm successful if it returns {some $x_\epsilon\in\reals^n$} such that the relative error 
$\Delta(x_\epsilon)/\|x_\star\|_2\leq \epsilon$.\looseness=-10

All algorithms are terminated after such an $x_\epsilon$ is computed. For each \tcb{$m/n\in\{5,6,7,8\}$}, we randomly generate $10$ instances according to the above procedure. For both \ipl{} and \adaipl{}, the parameters are set to $\rho_l = \rho_h = 0.24$ as in~\cite{zheng2023new}; and we choose \tcb{$G = 1.0$} for \adasub{} and \tcb{$\tilde{G} = 100$} for \adaipl{}. In these experiments, \tcb{the success rate is 1 for \adasub{} and close to 1 for \ipl{} and \adaipl{}}\footnote{\tcb{In only one replication for $p_{\rm fail} =0.2,m/n = 6$, all of \gsub{}, \ipl{} and \adaipl{} fail.}}, and we find that $q = 0.983$ for \gsub{} consistently yields the best performance \tcb{in terms of CPU time}\footnote{\tcb{In a small number of replications, \gsub{} fails under $q = 0.983$.}}; thus, we compare them based on this parameter choice. 
\begin{figure}
\centering
\begin{subfigure}[t]{0.49\textwidth}
\centering
\includegraphics[width=\linewidth]{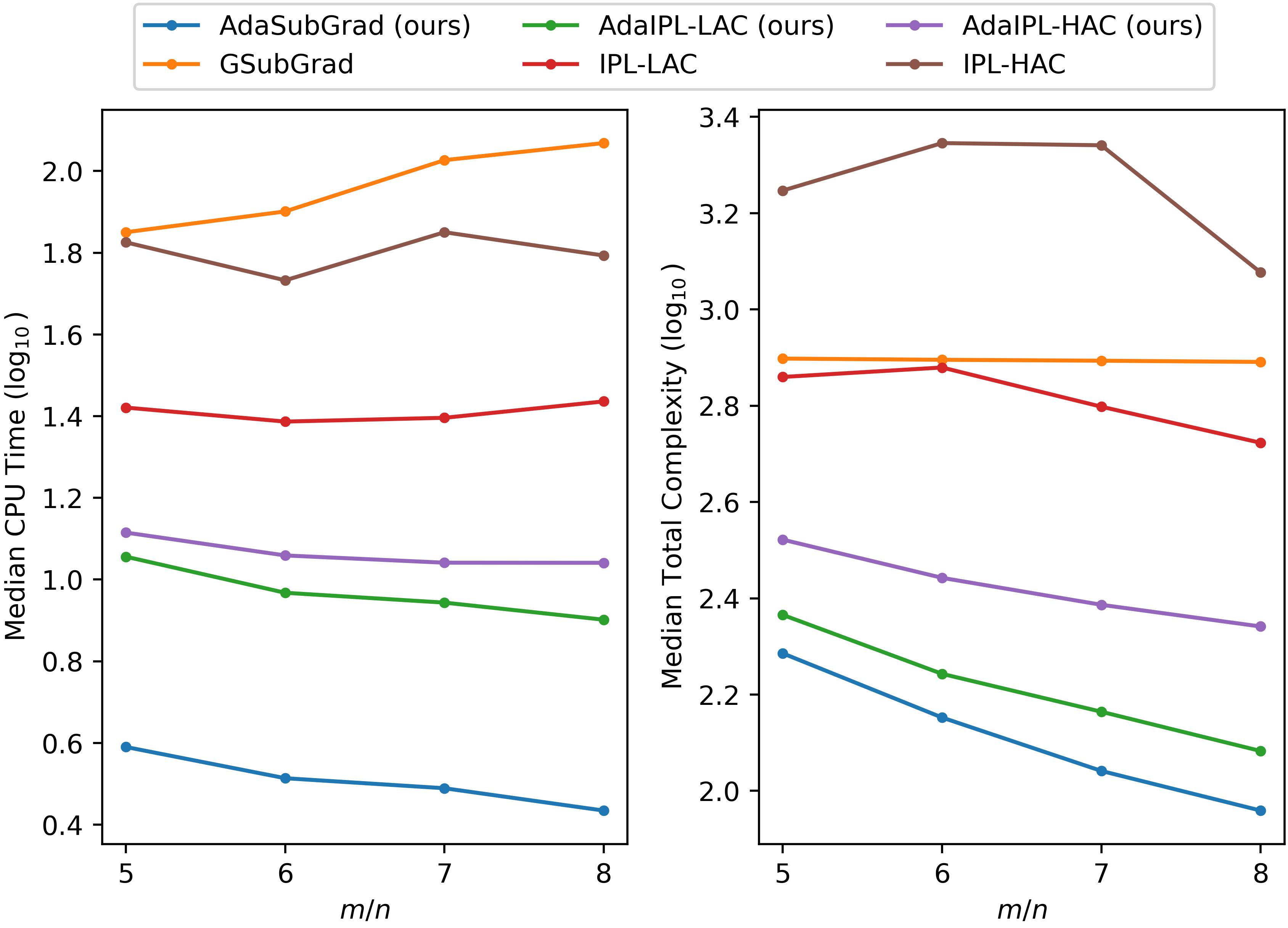}
\end{subfigure}
\hfill
\begin{subfigure}[t]{0.49\textwidth}
\centering
\includegraphics[width=\linewidth]{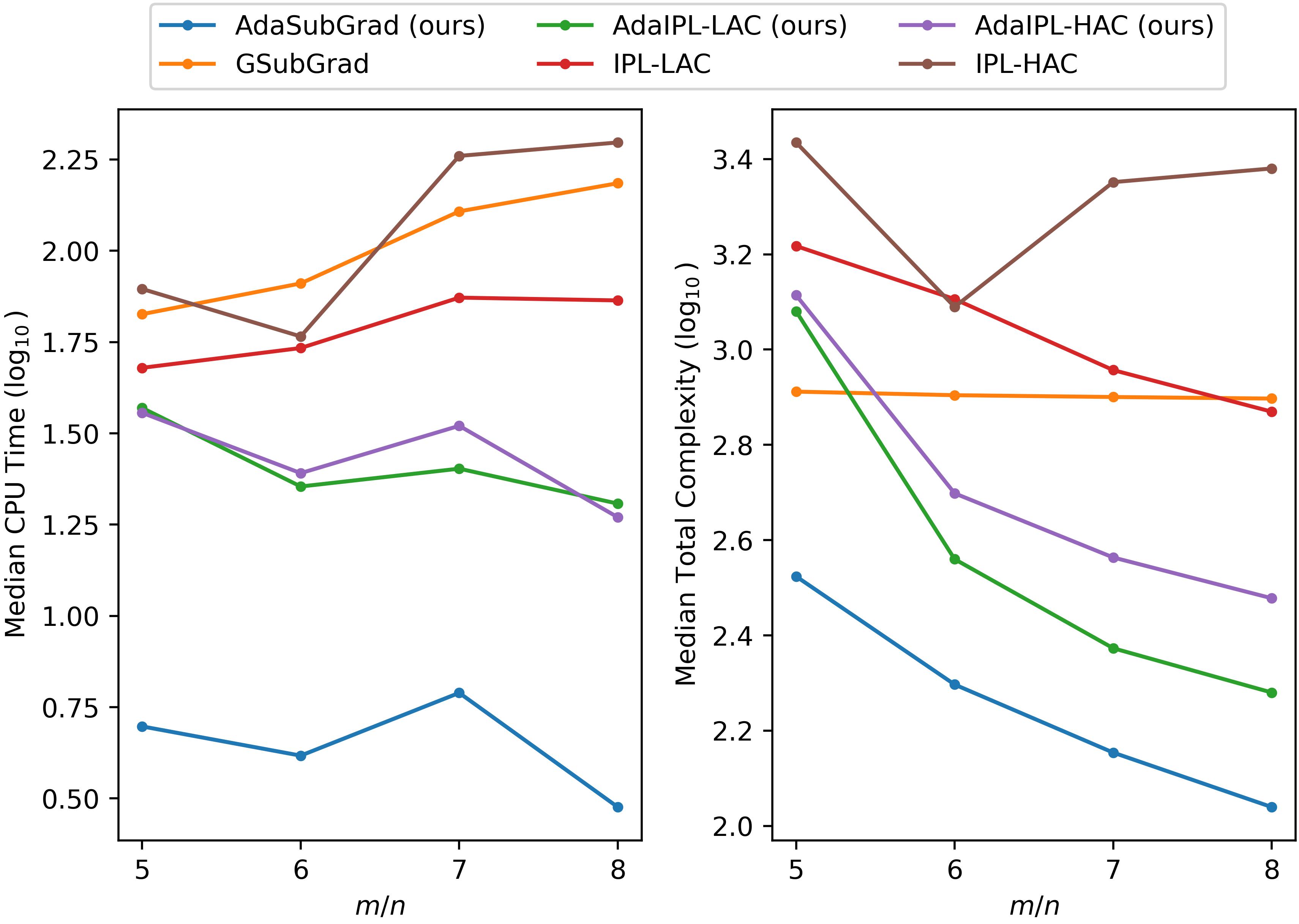}
\end{subfigure}
\caption{ \tcb{Comparisons on Synthetic Datasets. The left and right subfigures show the results for $p_{\rm fail} = 0.1$ and $0.2$, respectively.}}
\label{comparison_gen2}
\end{figure}
In \Cref{comparison_gen2}, we provide the experimental results
with $\epsilon = 10^{-7}$. The left panel of each subfigure shows the median CPU time in seconds\footnote{The CPU time for initialization and approximately calculating $L$ 
is negligible in this example.} for all the \tcb{successful} replications. The right panel of each subfigure shows the median of total iteration numbers, i.e., total inner iteration numbers for \ipl{}, and \adaipl{}, and total iteration numbers for \gsub{} and \adasub{}. \Cref{comparison_gen2} shows that 
both \adasub{} and \adaipl{} 
perform better than the other algorithms. 

Next, Table \ref{tab:detail_comp_gen} provides results for the setting in Figure \ref{comparison_gen2} with $m/n = 8$ and $p_{\rm fail} = 0.1$ to show the performance under different choices of $G$. In this table, median values for CPU time, total subproblem iterations, and main iterations are reported. We test $\tilde{G} \in \{1000, 100, 10, 1\}$ for \adaipl{}, and $G \in \{ 4.0, 3.0, 1.0, 0.1\}$ for \adasub{}. We find that using overly large or overly small $G$ for \adasub{} leads to large numbers of iterations and low efficiency. $G = 4.0$ even leads to divergence. For \adaipl{}, using overly large or overly small $\tilde{G}$ leads to large numbers of total iterations and low efficiency.
In addition, using a larger $\tilde{G}$ leads to a larger ratio of total iterations to main iterations and a smaller main iteration number, which is lower bounded by the \ipl{} counterparts. In addition, when $\tilde{G}$ is very small, the ratio of total iterations to main iterations is less than 3, which means that very few iterations are needed to solve a subproblem.
\begin{table}[]
\centering
{\footnotesize
\begin{tabular}{|c|c|c|c|}
\hline
& CPU Time (sec) & Total Iterations &  Main Iterations \\ \hline
\ipl{}\texttt{-LAC}              & 26.37           & 514                   & 11                     \\ \hline
\adaipl{}\texttt{-LAC}: 1000       & 10.22            & 144                     & 11                     \\ \hline
\adaipl{}\texttt{-LAC}: 100        & 9.13            & 121                   & 11                   \\ \hline
\adaipl{}\texttt{-LAC}: 10        & 11.47            & 150                   & 15                   \\ \hline
\adaipl{}\texttt{-LAC}: 1        & 75.19           & 476                     & 209                  \\ \hline
& CPU Time (sec) & Total Iterations & Main Iterations \\ \hline
\ipl{}\texttt{-HAC}             & 73.70           & 1548                    & 6                      \\ \hline
\adaipl{}\texttt{-HAC}: 1000      & 21.52           & 430                     & 7                      \\ \hline
\adaipl{}\texttt{-HAC}: 100       & 12.49            & 219                   & 7                      \\ \hline
\adaipl{}\texttt{-HAC}: 10       & 13.11            & 176                     & 16                     \\ \hline
\adaipl{}\texttt{-HAC}: 1       & 73.40           & 458                   & 209                  \\ \hline
& CPU Time (sec) & Total Iterations & Main Iterations \\ \hline
\adasub{}: 4.0 & diverge            & diverge                     & diverge                    \\ \hline
\adasub{}: 3.0 & 5.99            & 191                     & 191                    \\ \hline
\adasub{}: 1.0 & 2.98            & 91                     & 91                    \\ \hline
\adasub{}: 0.1 & 15.22           & 471                     & 471                    \\ \hline
\end{tabular}}%
\caption{ \tcb{Tests for different $G$ or $\tilde{G}$ 
values: median values for 10 replications are reported.}}
\label{tab:detail_comp_gen}
\end{table}

\subsection{Image Recovery}\label{sec:image}
We 
conduct 
numerical tests on images 
as in~\cite{duchi2019solving}. In particular, given an RGB image array $X_\star\in \mathbb{R}^{n_1\times n_2\times 3}$, we construct the signal as {$x_\star=[\mbox{vec}(X_{\star})^\top \mathbf{0}^\top]^\top\in \mathbb{R}^{n}$} where 
$n=\min\{2^s\mid s\in {\mathbb{N}},~2^s\geq 3n_1n_2\}$ and {$\mathbf{0}\in\reals^{n-3n_1n_2}$\tcb{---}here, $\mbox{vec}(\cdot)$ vectorize its argument}. Let $H_n\in\frac{1}{\sqrt{n}}\{-1,1\}^{n\times n}$ be the Hadamard matrix. We generate \textit{diagonal} matrices $S_j\in\reals^{n\times n}$ for $j=1,2,\ldots,k$ 
{such that all the diagonal entries are chosen uniformly at random from $\{-1,1\}$. Next, for $m=6n$, we set 
$A\triangleq\sqrt{n}[(H_nS_1)^\top (H_nS_2)^\top\ldots (H_nS_6)^\top]^\top\in\reals^{m\times n}$\tcb{---}for this setting, it can be shown that $L=2$}. The advantage of such a mapping is that it mimics the fast Fourier transform, and calculating $Ax$ 
requires $O(m\log n)$ work for $x\in\reals^n$. 
We conduct the test on an RNA image\footnote{https://visualsonline.cancer.gov/details.cfm?imageid=11167}
of size $n = 2^{22}$. \tcb{We remark that \texttt{Robust-AM} \cite{kim2024robust} cannot be applied here since its ADMM subproblem solver requires calculating the pseudo-inverse of a matrix of size $m\times n$ (see \cite[Eq.(9a)]{kim2024robust}).} In the experiment we considered $p_{\textrm{fail}} = 
0.1$, and we generate corrupted measurements as in the synthetic datasets. {The algorithm parameters are set to} \tcb{$G = 1.0$} for \adasub{}, \tcb{$\tilde{G} = 10$} for \adaipl{}, and $q = 0.983$ for \gsub{} while keeping the other hyperparameters the same as in \Cref{sec:synthetic}. 

\begin{table} 
\centering
\begin{tabular}{|c|c|c|}
\hline
\textbf{Method} & \textbf{CPU Time}  \\ \hline
\gsub{}     & 44.28 (7.52)                       \\ \hline
\ipl{}\texttt{-LAC}         & 209.24 (74.91)                     \\ \hline
\ipl{}\texttt{-HAC}        & 237.13 (59.59)                    \\ \hline
\adaipl{}\texttt{-LAC}     & 15.70 (4.90)                       \\ \hline
\adaipl{}\texttt{-HAC}    & 34.29 (8.14)                      \\ \hline
\adasub{} & 3.01 (0.91)                        \\ \hline
\end{tabular}%
\caption{\tcb{Comparison of CPU time (in minutes) for image recovery problem. Median (interquartile range) values are reported over $10$ replications.}}
\label{tab:image_rna}
\end{table}

All the algorithms are terminated whenever $x_\epsilon$ with a relative error at most $\epsilon=10^{-7}$ is computed. The results are reported in \Cref{tab:image_rna}, where we report the median and interquartile range of CPU times in \textit{minutes} based on $10$ replications.
The results show that 
both \adasub{} and \adaipl{} perform better than the other algorithms.
\section{Conclusion}\label{sec:conclusions}
In this paper, we propose two adaptive algorithms for solving the robust phase retrieval
problem. Our contribution lies in designing new adaptive step size rules that are based on the quantiles of absolute residuals and are robust to sparse corruptions. Employing adaptive step sizes, both methods show local linear convergence and are robust to hyper-parameter selection. Numerical results demonstrate that both \adasub{} and \adaipl{} perform significantly better than the existing state-of-the-art methods tested on both \tcb{synthetic}-data and real-data RPR problems.

\bibliographystyle{IEEEtran}
\bibliography{inexact_pl_refs2,references}

\end{document}